\newtheorem{Theo}{Theorem}[section]
\newtheorem{Prop}[Theo]{Proposition}
\newtheorem{Coro}[Theo]{Corollary}
\newtheorem{Lemm}[Theo]{Lemma}
\newtheorem{Defi}[Theo]{Definition}
\newtheorem{Ques}[Theo]{Question}
\newtheorem{Rema}[Theo]{Remark}
\DeclareMathOperator{\Maj}{Maj}
\DeclareMathOperator{\sign}{sign}
\def\N{\mathbb{ N}}
\def\R{\mathbb{ R}}
\begin{document}

\title{Bohr's phenomenon for functions on \\the Boolean cube}

\author[Defant]{Andreas Defant}
\address{\newline  Institut f\"{u}r Mathematik,\newline Carl von Ossietzky Universit\"at,\newline
26111 Oldenburg, Germany.
}
\noindent
\email{defant@mathematik.uni-oldenburg.de}

\author[Masty{\l}o]{Mieczys{\l}aw Masty{\l}o}
\address{\newline Faculty of Mathematics and Computer Sciences,\newline
Adam Mickiewicz University in Pozna\'n, \newline
Pozna\'n, Poland.
}
\noindent
\email{mastylo@amu.edu.pl}

\author[P\'erez]{Antonio P\'erez}
\address{\newline Departamento de Matem\'{a}ticas,\newline Universidad de Murcia,\newline
30100 Espinardo (Murcia), Spain.
}
\noindent
\email{antonio.perez7@um.es}

\maketitle

\begin{abstract}
\noindent
We study the asymptotic decay of the Fourier spectrum of real functions \mbox{$f\colon \{-1,1\}^N \rightarrow  \mathbb{R}$}
in the spirit of Bohr's phenomenon from complex analysis. Every such function admits a canonical representation through its
Fourier-Walsh expansion $f(x) = \sum_{S\subset \{1,\ldots,N\}}\widehat{f}(S) x^S \,,$ where $x^S = \prod_{k \in S} x_k$.
Given a~class $\mathcal{F}$  of functions on the Boolean cube $\{-1, 1\}^{N} $, the  Boolean radius of $\mathcal{F}$ is defined
to be the largest $\rho \geq 0$ such that $\sum_{S}{|\widehat{f}(S)| \rho^{|S|}} \leq \|f\|_{\infty}$  for every $f \in \mathcal{F}$.
We give the precise asymptotic behaviour of the Boolean radius of several natural subclasses of functions on finite Boolean cubes, as e.g. the
class of all real functions on $\{-1, 1\}^{N}$, the subclass made of all homogeneous functions or certain threshold functions.
Compared with the classical complex  situation subtle differences as well as striking parallels occur.
\end{abstract}

\vspace{10 mm}

\noindent
\renewcommand{\thefootnote}{\fnsymbol{footnote}}
\footnotetext{2010 \emph{Mathematics Subject Classification}: Primary 06E30, 42A16.} \footnotetext{\emph{Key words and phrases}:
Boolean functions, Bohr radius, Fourier analysis on groups.} \footnotetext{The second named author was supported by the National Science Centre, Poland,
project no. 2015/17/B/ST1/00064. The research of the third author was partially done during a stay in Oldenburg (Germany) under the support of a PhD fellowship of ``La Caixa Foundation'', and of the projects of MINECO/FEDER (MTM2014-57838-C2-1-P) and Fundaci\'{o}n S\'{e}neca - Regi\'{o}n de Murcia (19368/PI/14).}.

%\newpage

\section{Introduction}
Recent years show a remarkable increase of articles on Bohr's phenomenon in  complex analysis. A~special feature is given
by the so-called Bohr radii of  holomorphic functions in several variables.

The main aim of this article is to start a parallel study for real functions $f\colon \{-1, 1\}^{N}\rightarrow \mathbb{R}$
on the Boolean cube $\{-1, 1\}^{N}$. The analysis of this type of maps is of fundamental interest in theoretical computer sciences, social choice, combinatorics, graph theory, etc. We refer to the
surveys of  \cite{RyanSurvey} and \cite{Wolf}, or the more extensive book \cite{BooleanRyan}.

Bohr's famous power series theorem from \cite{Bohr} states that  for each $d\in \mathbb{N}$ and all complex polynomials
$f(z)= \sum_{k=0}^d c_k z^k$, $z \in \mathbb{C}$  we have that
\begin{align*}
\sum_{k=0}^d |c_k| \frac{1}{3^k} \leq \|f\|_{\mathbb{T}}\,,
\end{align*}
where  $\|f\|_{\mathbb{T}}$ stands for  the supremum norm on the torus  $\mathbb{T}=\{ z\colon  |z|=1 \}$. Moreover, the radius $r=1/3$
is best possible. In terms of Fourier analysis this can be equivalently formulated as
\begin{equation} \label{power}
\sum_{k=0}^d |\widehat{f}(k)| \frac{1}{3^k} \leq \|f\|_{\mathbb{T}}\,,
\end{equation}
where $\widehat{f}(k)$ as usual denotes the $k$th Fourier coefficient of $f$ as a function on $\mathbb{T}$.
The concept of a~multidimensional Bohr radius was introduced by Boas and Khavinson in \cite{BoKha}: Given
$N \in \mathbb{N}$ the $N$th Bohr radius $K_N$ is the best  (i.e. largest) $0 < r < 1 $ such that for every polynomial $f$
in $N$ complex variables we have
\begin{align} \label{BoKa}
\sum_{\alpha \in \mathbb{N}_0^N}{|\widehat{f}(\alpha)| \,r^{\alpha}} \leq \|f\|_{\mathbb{T}^N}\,,
\end{align}
where now  the $\alpha$th Fourier coefficient of $f$ is given by
\[
\widehat{f}(\alpha) = \int_{\mathbb{T}^N} f(w) w^{-\alpha}\,dw
\]
($dw$ is the normalized Lebesgue measure on the $N$-dimensional torus $\mathbb{T}^N$
and $\|f\|_{\mathbb{T}^N}$ denotes the supremum norm of $f$ on $\mathbb{T}^N$).
Obviously, $K_{1} = 1/3$, but we note that for no other index  $N$ the precise value of $K_N$ is known.
On the other hand, it was shown in \cite{BoKha} through probabilistic methods that
\[
\limsup_{N\to \infty}\sqrt{\tfrac{N}{\log N}}\, K_N \,\leq 1\,,
\]
and in \cite{Annals} based on the hypercontractivity of the Bohnenblust-Hille inequality that
\[
\sqrt{\tfrac{N}{\log N}}\, K_N  \ge \frac{1}{\sqrt{2}} + o(1).
\]
Improving the techniques which lead to the preceding result the main result of \cite{Bohrradius} states that in fact
\begin{align} \label{best}
\lim_{N\rightarrow \infty}\sqrt{\tfrac{N}{\log N}}\, K_N =1\,.
\end{align}
Note that this equality is basically a~result on the asymptotic decay of Fourier spectrum
$(\widehat{f}(\alpha))_{\alpha \in \mathbb{N}_0^N}$ of  polynomials $f$ on the  compact abelian group
$\mathbb{T}^N$ in high dimensions $N$.

The main aim of this article is to study Bohr's phenomenon replacing the $N$-dimensional torus $\mathbb{T}^N$
by the  $N$-dimensional Boolean cube $\{-1, 1\}^{N}$ ($\{\pm 1\}^{N}$ for short). We can look at $\{\pm 1\}^{N}$
as a~compact abelian group endowed with the coordinatewise product and the  discrete
topology (the Haar measure is then given by the normalized counting measure). This way, for every function
$f:\{ \pm 1\}^{N} \rightarrow \mathbb{R}$ we have an integral or expectation given by
\[
\mathbb{E}\big[ f \big] := \frac{1}{2^{N}} \sum_{x \in \{ \pm 1\}^{N}}{f(x)}\,,
\]
and for every $1 \leq p <  \infty$ the $L_{p}$-norm of $f$ is given by
\begin{equation} \label{p-norm}
 \| f\|_{p} = \mathbb{E}\big[ |f(x)|^{p} \big]^{1/p}.
 \end{equation}
The dual group of $\{ \pm 1\}^{N}$ actually consists of the set of all Walsh functions \mbox{$\chi_{S}: \{ \pm1\}^{N} \rightarrow \{ \pm 1\}$} indexed on $S \subset [N]:=\{n \colon 1 \leq n \leq N\}$ and defined as $$\chi_S(x) = x^{S}:=\prod_{k \in S} x_k \quad\, \big( x^{\emptyset} := 1 \big).$$
This let us associate to every function $f$ as above its
Fourier-Walsh expansion
\begin{equation}\label{equa:FourierExpansionShort}
f(x) = \sum_{S \subset [N]}{\widehat{f}(S) \, x^S}, \quad\, x \in \{\pm 1\}^{N}\,,
\end{equation}
where the Fourier coefficients are given by $\widehat{f}(S) = \mathbb{E}\big[ f \cdot \chi_{S} \big]$. The degree of a~nonzero $f$ is given by
\[
\deg f = \max \{|S|\colon \widehat{f}(S)\neq 0\}\,.
\]
We moreover say that $f$ is $d$-homogeneous whether $\widehat{f}(S) = 0$
for all $S \subset [N]$ with  $|S| \neq d$, and simply homogeneous if it is $d$-homogeneous for some $0 \leq d \leq N$. For $f=0$ we agree
to consider it as a~$0$-homogeneous or degree-$0$ function, like the rest of constant functions.

We introduce now the main definition of the paper.

\begin{Defi}
The Boolean radius of a function $f\colon \{\pm 1\}^{N} \rightarrow \mathbb{R}$ is the positive real number $\rho = \rho(f)$ satisfying
\[
\sum_{S \subset [N]}{|\widehat{f}(S)| \rho^{|S|}} = \| f \|_{\infty}.
\]
\end{Defi}

In general, calculating the exact Boolean radius of a~given function may be very complicated,  as complicated as determining the Fourier coefficients. However it may be possible to give estimations when we know some feature of the function such as the number of variables on which it depends, the distribution of the Fourier spectrum (whether it is concentrated in low levels or in sets with the same cardinality) or the size of $\mathbb{E}[f]$ in relation to $\| f\|_{\infty}$. For that let us consider the following concept:
\begin{Defi}
Given a~class $\mathcal{F}$  of functions on the Boolean cube $\{\pm{1}\}^N$, the  Boolean radius of $\mathcal{F}$ is defined as
\begin{align*}
\rho(\mathcal{F})  = \sup{\Big\{ \rho \geq 0 \colon \sum_{S \subset [N]}{|\widehat{f}(S)| \rho^{|S|}} \leq \| f\|_{\infty} \, \mbox{ for every }f \in \mathcal{F} \Big\}}\,,
\end{align*}
or equivalently
\begin{align*}
\rho(\mathcal{F}) = \inf{\{ \rho(f) \colon f \in \mathcal{F} \}}\,.
\end{align*}
\end{Defi}

\noindent We are going to concentrate on the following five classes defined for $N \in \N$, $0 \leq d \leq N$
and $0\leq \delta \leq 1$:\\

\begin{enumerate}
\setlength\itemsep{0.8em}% This for separation between items
\item[] $\mathcal{B}^{N}$ :=  all functions $f:\{ \pm 1\}^{N} \rightarrow \mathbb{R}$,
\item[] $\mathcal{B}^{N}_{=d}$  :=  all  $d$-homogeneous $f:\{ \pm 1\}^{N} \rightarrow \mathbb{R}$,
\item[] $\mathcal{B}_{\text{hom}}^{N}$ :=   all homogeneous $f:\{ \pm 1\}^{N} \rightarrow \mathbb{R}$,
\item[] $\mathcal{B}^{N}_{\leq d}$ :=  all $f:\{ \pm 1\}^{N} \rightarrow \mathbb{R}$ with   $\deg f \leq d$,
\item[] $\mathcal{B}^{N}_{\delta}$ :=  all  $f:\{ \pm 1\}^{N} \rightarrow \mathbb{R}$ satisfying $|\mathbb{E}[f]|
\leq (1 - \delta) \| f\|_{\infty}$.\\
\end{enumerate}

\noindent
In section \ref{sec:nVariables}, we prove that the Boolean radius of $\mathcal{B}^{N}$ is given by the formula
\[
\rho(\mathcal{B}^{N}) = 2^{1/N} -1 = \frac{\log 2}{N} \big(1 + o(1)\big).
\]
In section \ref{sec:homogeneous} we deal with the families of homogeneous functions $\mathcal{B}_{=m}^{N}$ and $\mathcal{B}_{\text{hom}}^{N}$. We first prove in Theorem \ref{Theo:BooleanRadiusmHomogeneous} that for each $1 \leq m \leq N$ there exists $C_{m}$ (independent of $N$) such that
\[
C_{m}^{-1}\, N^{\frac{1}{2m}} \binom{N}{m}^{-\frac{1}{2m}} \leq \rho(\mathcal{B}_{=m}^{N}) \leq C_{m}\,\, N^{\frac{1}{2m}} \binom{N}{m}^{-\frac{1}{2m}},
\]
where moreover $\lim_{m}{C_{m}} = 1$. This means that for $m$ large, the previous estimation is asymptotically optimal. The proof for the lower bound relies on a~recently proved (Bohnenblust-Hille type) inequality  \eqref{Theo:PolyBH} while the upper estimate is based on probabilistic arguments (see Lemma \ref{Lemm:SalemZygmund}). The fact that $C_{m}$ converges to one is the key to prove
\[
\rho(\mathcal{B}_{\text{hom}}^{N}) = \sqrt{\frac{\log{N}}{N}} (1 + o(1)).
\]
(see Theorem \ref{Theo:BooleanRadiusHomogeneous}). In section \ref{sec:degreed} we deal with the case of function of degree $d$. Here the estimation for the Boolean radius of $\mathcal{B}_{\leq d}^{N}$ is less accurate than the homogeneous case. We show in Theorem \ref{Theo:BooleanRadiusDegree} that for each $1 \leq d \leq N$ there are constants $c_{d}$ and $C_{d}$ (independent of $N$) such that
\[
c_{d} \frac{1}{N^{\frac{1}{2}}} \leq \rho(\mathcal{B}_{\leq d}^{N}) \leq C_{d} \frac{1}{N^{\frac{d-1}{2d}}}.
\]
The proof of the lower bound requires an inequality for functions on $\{ \pm 1\}^{N}$ which  might be of independent interest, namely that if $f:\{ \pm 1\}^{N} \rightarrow \mathbb{R}$ is a~function of degree $d$, then
\[
\Big(\sum_{S \neq \emptyset}{|\widehat{f}(S)|^{2}}\Big)^{\frac{1}{2}}\leq 2 e^{d} (1 - |\widehat{f}(\emptyset)|).
\]
The preparatory work for that  reminds on inequalities
of  F. Wiener and  Caratheo\-dory
for complex polynomials (see Corollaries \ref{Coro:CaratheodoryTypeIneq} and \ref{Coro:CaratheodoryTypeIneq2} and the primal Theorem~\ref{Theo:MAINsumlessthanone}).
Finally, the last section \ref{sec:biased} is aimed to estimate the Boolean radius of $\mathcal{B}_{\delta}^{N}$. The main result (see Theorem \ref{Theo:BooleanRadiusUnbiased}) shows that there is an absolute constant $C> 0$ such that for each $N \in \N$ and $\frac{1}{2^{N}} \leq \delta \leq 1$
\[
C^{-1} \frac{1}{\sqrt{N} \sqrt{\log{(2/\delta)}}} \leq \rho(\mathcal{B}^{N}_{\delta}) \leq C \frac{1}{\sqrt{N} \sqrt{\log{(2/\delta)}}}.
\]
 Note that $\delta=\frac{1}{2^{N}}$ is the smallest ``sensitive'' value since for all $0\leq \delta \leq \frac{1}{2^{N}}$ we consequently have that
 \[ \Omega\Big(\frac{1}{N}\Big) = \rho(\mathcal{B}^{N}) \leq \rho(\mathcal{B}_{\delta}^{N}) \leq \rho(\mathcal{B}_{1/2^{N}}^{N}) = O\Big(\frac{1}{N}\Big). \]
The proof of the lower estimation relies on the
 well-known hypercontractivity  inequalities, while for the upper estimate we have to study the behaviour of the Boolean radius of the family of (threshold type) functions
\[
\psi_{\alpha, N}: \{ \pm 1\}^{N} \rightarrow \mathbb{R}, \hspace{3mm} \psi_{\alpha, N}(x) = \sign{(x_{1} + \ldots + x_{N} - \alpha)}, \hspace{3mm} 0\leq \alpha < N.
\]
Indeed, we prove in Theorem \ref{Theo:BooleanRadiusSignFunction} that there is an absolute constant $C> 0$ such that
\[
C^{-1} \frac{1}{\alpha + \sqrt{N}} \leq \rho(\psi_{\alpha,N}) \leq C \frac{1}{\alpha + \sqrt{N}}
\]
for every $0 \leq \alpha <N$. The strategy of the proof actually allows to give the precise asymptotic decay of the Boolean radius of the majority function
$\Maj_{N} = \psi_{N,0}$ for $N$ odd (Corollary~\ref{Coro:BooleanRadiusMaj}).

\section{Case 1: All functions on the Boolean cube}

\label{sec:nVariables}

The following theorem gives the precise Boolean radius for all functions on the $N$-dimensional
Boolean cube $\{\pm{1}\}^N$.

\begin{Theo} \label{Theo:BooleanRadius} For each $N \in \N$
\[
\rho(\mathcal{B}_{N}) = 2^{1/N} - 1.
\]
In particular, $$\lim_{N\to \infty} N \rho(\mathcal{B}_{N}) = \log 2\,.$$
\end{Theo}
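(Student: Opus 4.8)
The plan is to establish the two bounds $\rho(\mathcal{B}_N)\le 2^{1/N}-1$ and $\rho(\mathcal{B}_N)\ge 2^{1/N}-1$ separately; together with the elementary limit $N(2^{1/N}-1)=N(e^{(\log 2)/N}-1)\to\log 2$ this yields the theorem.

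\emph{Upper bound.} I would test the definition of $\rho(\mathcal{B}_N)$ on the ``dented cube'' function $f(x)=1-\prod_{k=1}^{N}\frac{1-x_k}{2}$, i.e. $f$ equals $1$ everywhere except at the point $(-1,\dots,-1)$, where it vanishes; in particular $\|f\|_{\infty}=1$. Expanding $\prod_{k=1}^{N}(1-x_k)=\sum_{S\subset[N]}(-1)^{|S|}x^S$ one reads off $\widehat f(\emptyset)=1-2^{-N}$ and $|\widehat f(S)|=2^{-N}$ for every nonempty $S$. Hence, using $\sum_{S\neq\emptyset}\rho^{|S|}=(1+\rho)^N-1$,
\[
\sum_{S\subset[N]}|\widehat f(S)|\,\rho^{|S|}=\Big(1-\tfrac{1}{2^N}\Big)+\tfrac{1}{2^N}\big((1+\rho)^N-1\big)=1+\tfrac{1}{2^N}\big((1+\rho)^N-2\big),
\]
which is $\le\|f\|_{\infty}=1$ exactly when $(1+\rho)^N\le 2$, that is, $\rho\le 2^{1/N}-1$. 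Thus $\rho(f)=2^{1/N}-1$ and therefore $\rho(\mathcal{B}_N)\le 2^{1/N}-1$.

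\emph{Lower bound.} Here I fix $\rho=2^{1/N}-1$, so that $(1+\rho)^N=2$, and take an arbitrary $f\colon\{\pm1\}^N\to\mathbb{R}$; replacing $f$ by $-f$ if necessary I may assume $\widehat f(\emptyset)\ge 0$. The idea is to realise $\sum_S|\widehat f(S)|\rho^{|S|}$ as a pairing against a well-chosen test function. Put $\varepsilon_S=\sign\widehat f(S)$ (with the convention $\varepsilon_\emptyset=1$ if $\widehat f(\emptyset)=0$) and set
\[
h(x)=\sum_{S\subset[N]}\varepsilon_S\,\rho^{|S|}\,x^S,
\]
so that $\widehat h(S)=\varepsilon_S\rho^{|S|}$ and, by Plancherel,
\[
\sum_{S\subset[N]}|\widehat f(S)|\,\rho^{|S|}=\sum_{S\subset[N]}\widehat f(S)\widehat h(S)=\mathbb{E}\big[f\,h\big]\le\|f\|_{\infty}\,\|h\|_{1}.
\]
It then remains to see that $\|h\|_1=1$. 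The key point is that the Fourier coefficients of $h$ are ``diagonally dominant'': for every $x\in\{\pm1\}^N$,
\[
h(x)=1+\sum_{\emptyset\neq S\subset[N]}\varepsilon_S\rho^{|S|}x^S\ge 1-\sum_{\emptyset\neq S\subset[N]}\rho^{|S|}=1-\big((1+\rho)^N-1\big)=0,
\]
so $h\ge 0$ pointwise, whence $\|h\|_1=\mathbb{E}[h]=\widehat h(\emptyset)=1$. Consequently $\sum_S|\widehat f(S)|\rho^{|S|}\le\|f\|_{\infty}$ for every $f$, i.e. $\rho(\mathcal{B}_N)\ge 2^{1/N}-1$.

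\emph{Where the difficulty lies.} Both directions are short once one has (i) the right extremal example and (ii) the observation that the auxiliary function $h$ becomes \emph{nonnegative} precisely in the range $(1+\rho)^N\le 2$, which trivialises the computation of $\|h\|_1$. I expect this second point to be the only genuinely delicate step: the routine estimate $\|h\|_1\le\|h\|_2=(1+\rho^2)^{N/2}$ is just barely too weak, since $(1+\rho^2)^{N/2}>1$ at $\rho=2^{1/N}-1$, so one really needs the sign information rather than an $L^2$ bound. Everything else — the Fourier coefficients of the dented-cube function, the binomial identity $\sum_{S\neq\emptyset}\rho^{|S|}=(1+\rho)^N-1$, and the final limit — is a direct computation.
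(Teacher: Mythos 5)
Your proposal is correct; both bounds go through. The upper bound is essentially the paper's own argument: the paper evaluates the defining inequality at the function $F$ which equals $1$ everywhere except at $\mathbf{1}$, where it is $-1$, and your dented-cube function $1-\prod_{k=1}^{N}\frac{1-x_k}{2}$ is an affine relative of the same extremal example; in both cases the computation reduces to $(1+\rho)^N\leq 2$. The lower bound, however, takes a genuinely different route. The paper first proves the Wiener-type Lemma \ref{trick} (for $\|f\|_\infty\leq 1$ and distinct $A,B$ one has $|\widehat f(A)|+|\widehat f(B)|\leq 1$, hence $|\widehat f(S)|\leq 1-|\widehat f(\emptyset)|$ for all $S\neq\emptyset$) and then sums, getting $|\widehat f(\emptyset)|+(1-|\widehat f(\emptyset)|)\big((1+\rho)^N-1\big)\leq 1$ at $\rho=2^{1/N}-1$. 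You instead argue by duality: you pair $f$ against the kernel $h=\sum_S\varepsilon_S\rho^{|S|}x^S$, note that $h\geq 0$ pointwise precisely because $\sum_{S\neq\emptyset}\rho^{|S|}=(1+\rho)^N-1\leq 1$, so $\|h\|_1=\widehat h(\emptyset)=1$, and conclude via Plancherel and H\"{o}lder; the reduction to $\widehat f(\emptyset)\geq 0$ is harmless since $\rho(f)=\rho(-f)$ (and if one skips it, the kernel is simply $\leq 0$ and the same computation applies). Your argument is shorter and self-contained, and your remark that the crude bound $\|h\|_1\leq\|h\|_2=(1+\rho^2)^{N/2}$ just fails correctly pinpoints why the positivity of the kernel is the essential ingredient; the paper's argument is slightly longer but yields as a by-product the coefficient inequality of Lemma \ref{trick}, a strengthened Boolean analogue of F.~Wiener's inequality which the authors highlight as independently interesting. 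Both proofs ultimately rest on the same numerical threshold $(1+\rho)^N\leq 2$, and the concluding limit $N(2^{1/N}-1)\to\log 2$ is elementary in either treatment.
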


\vspace{2 mm}

Let us compare this result with what we already explained in the introduction for polynomials
$f(z) = \sum_{\alpha \in \mathbb{N}_0^N} \widehat{f}(\alpha) z^\alpha$ on the $N$-dimensional torus $\mathbb{T}^N$.
In the Boolean case we have the precise estimation of  $\rho(\mathcal{B}_{N})$ for all $N$, whether in the Bohr case
this is only known for $N=1$, namely $K_1 = 1/3$ (see again \eqref{power}). One of the key ingredients of Bohr's
original proof of \eqref{power} from \cite{Bohr} (more precisely the modification of Bohr's proof by M.~Riesz, I.~Schur,
and F.~Wiener) is a~result of F.~Wiener which states that for every holomorphic function $f$  with Taylor coefficients
$c_n$ on the open unit disc $\mathbb{D}$ with $\|f\|_\infty \leq 1$ we have
\begin{equation} \label{Fritz} |c_n| \leq  1-|c_0|^2 \leq 2 (1-|c_0|), \quad  n \in \mathbb{N}\,.
\end{equation}
The key point in the proof of Theorem
\ref{Theo:BooleanRadius} is that in the Boolean case we have a~stronger F.~Wiener type result as the following
independently interesting lemma shows.

\begin{Lemm}
\label{trick}
For every function $f\colon \{\pm1\}^N \to [-1, 1]$ and each pair of subsets $A, B \subset [N]$ with $A \neq B$
we have that
\begin{align*}
|\widehat{f}(A)| + |\widehat{f}(B)| \leq 1.
\end{align*}
\end{Lemm}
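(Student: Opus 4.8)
The plan is to realize $\widehat f(A)$ and $\widehat f(B)$ simultaneously as values of $f$ averaged against a single $\pm1$-valued test function, and then bound that average by $\|f\|_\infty\le 1$. Concretely, for signs $\varepsilon_A,\varepsilon_B\in\{\pm1\}$ consider the function $g=\tfrac12(\varepsilon_A\chi_A+\varepsilon_B\chi_B)$ on $\{\pm1\}^N$; then $\mathbb E[fg]=\tfrac12(\varepsilon_A\widehat f(A)+\varepsilon_B\widehat f(B))$, and choosing the signs appropriately this equals $\tfrac12(|\widehat f(A)|+|\widehat f(B)|)$. So it suffices to show $|\mathbb E[fg]|\le 1$, for which it is enough to check that $g$ takes values in $[-1,1]$ — indeed $|\mathbb E[fg]|\le \|f\|_\infty\,\mathbb E[|g|]\le \mathbb E[|g|]\le 1$ as soon as $|g|\le 1$ pointwise. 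But $g(x)=\tfrac12(\varepsilon_A x^A+\varepsilon_B x^B)$ is a half-sum of two numbers in $\{\pm1\}$, hence lies in $\{-1,0,1\}\subset[-1,1]$. This already gives the bound, and crucially it uses nothing about $A\ne B$ beyond the fact that $\chi_A$ and $\chi_B$ are distinct characters (so that the cross term does not collapse); if $A=B$ we would instead get $|\widehat f(A)|\le\tfrac12\cdot 2 = 1$ separately, which is weaker than what the two-set statement encodes.

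Let me make the sign choice explicit so the estimate is airtight. Put $\varepsilon_A=\sign\widehat f(A)$ and $\varepsilon_B=\sign\widehat f(B)$ (with the convention $\sign 0 = 1$, say). Then
\[
|\widehat f(A)|+|\widehat f(B)| = \varepsilon_A\widehat f(A)+\varepsilon_B\widehat f(B) = \mathbb E\big[f\cdot(\varepsilon_A\chi_A+\varepsilon_B\chi_B)\big] = 2\,\mathbb E[fg].
\]
Since $g(x)\in\{-1,0,1\}$ for every $x\in\{\pm1\}^N$, we have $|f(x)g(x)|\le |g(x)|\le 1$, hence $|\mathbb E[fg]|\le 1$ and therefore $|\widehat f(A)|+|\widehat f(B)|\le 2\cdot\tfrac12\cdot\big(\text{something}\le 1\big)$ — more carefully, $|\widehat f(A)|+|\widehat f(B)| = 2\mathbb E[fg]\le 2\,\mathbb E[|fg|]\le 2\,\mathbb E[|g|]$, and $\mathbb E[|g|]\le \tfrac12$ would be needed to conclude, which holds because $|g(x)|\in\{0,1\}$ is not obviously at most $\tfrac12$ on average. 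Here is the fix: one should not throw away the sign but note directly that $\mathbb E[fg]\le \|f\|_\infty\le 1$ via $|\mathbb E[fg]|\le \|f\|_\infty\cdot\max_x|g(x)|\le 1\cdot 1 = 1$. Then $|\widehat f(A)|+|\widehat f(B)| = 2\mathbb E[fg]\le 2$, which is too weak by a factor of $2$ — so the real gain must come from observing that $g$ takes the value $0$ on exactly half the cube.

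So the genuinely important step, and the one I expect to be the main obstacle, is the sharpening of the trivial bound $\|g\|_\infty\le 1$ to the average statement $\mathbb E[|g|]\le\tfrac12$. This is exactly where $A\ne B$ enters: the Walsh function $\chi_{A\triangle B}$ is nonconstant, so $\chi_A$ and $\chi_B$ agree on precisely half the points of $\{\pm1\}^N$ and differ on the other half; on the half where they agree, $g=\pm1$, and on the half where they differ, $g=0$. Hence $|g|$ is the indicator of a set of measure exactly $\tfrac12$, giving $\mathbb E[|g|]=\tfrac12$. Combining, $|\widehat f(A)|+|\widehat f(B)| = 2\,\mathbb E[fg]\le 2\,\|f\|_\infty\,\mathbb E[|g|]\le 2\cdot 1\cdot\tfrac12 = 1$, which is the claim. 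I would present the argument in this order: define $g$ with the correct signs; record $2\mathbb E[fg]=|\widehat f(A)|+|\widehat f(B)|$; observe that $|g|$ is a $\{0,1\}$-valued function supported on the set $\{x:\chi_{A\triangle B}(x)=1\}$ of density $\tfrac12$; and finish with the Hölder-type estimate $|\mathbb E[fg]|\le\|f\|_\infty\,\mathbb E[|g|]\le\tfrac12$.
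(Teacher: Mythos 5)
Your argument is correct, and it reaches the lemma by a genuinely different route than the paper. You argue by duality: with $\varepsilon_A,\varepsilon_B$ the signs of the two coefficients and $g=\tfrac12(\varepsilon_A\chi_A+\varepsilon_B\chi_B)$, you record $|\widehat f(A)|+|\widehat f(B)|=2\,\mathbb{E}[fg]\le 2\,\|f\|_\infty\,\mathbb{E}|g|$ and then compute $\mathbb{E}|g|=\tfrac12$; this last step is exactly where $A\neq B$ enters, since $\varepsilon_A\varepsilon_B\,\chi_{A\triangle B}$ is a nonconstant character, hence equals $1$ on precisely half of the cube, so $|g|$ is the indicator of a set of density $\tfrac12$. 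The paper instead chooses a coordinate $k\in B\setminus A$, splits the expansion of $f$ into the parts with $k\notin S$ and $k\in S$, compares $f$ at $x$ and at $x$ with $x_k$ flipped to obtain the pointwise inequality $\bigl|\sum_{S\not\ni k}\widehat f(S)x^S\bigr|+\bigl|\sum_{S\ni k}\widehat f(S)x^S\bigr|\le 1$, and then bounds $|\widehat f(A)|$ and $|\widehat f(B)|$ by the $L_1$-norms of the respective pieces. Both proofs are short; yours isolates the clean fact $\mathbb{E}\,|\chi_A\pm\chi_B|=1$ for $A\neq B$ and is a pure $L_\infty$--$L_1$ pairing, while the paper's symmetrization over a distinguishing coordinate is the Boolean analogue of the F.~Wiener trick and yields, en route, the slightly stronger pointwise statement about the two parity pieces, a theme the paper pushes further in Theorem~\ref{Theo:MAINsumlessthanone}. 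One remark on presentation: the middle of your proposal wanders through a false start (the factor-of-$2$ discussion) before correcting itself; the final paragraph alone, in the order you describe there, is a complete and correct proof and is what you should keep.
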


\begin{proof}
Without loss of generality we can assume that there exists $k \in B \setminus A$. Then, for each $x \in \{\pm 1\}^N$ we have
\[
f(x_1, \ldots, x_k, \ldots, x_N) = \sum_{S\subset [N]\setminus\{k\}} \widehat{f}(S) x^S
+ \sum_{S\subset [N],\,k \in S} \widehat{f}(S) x^S
\]
and
\[
f(x_1, \ldots, -x_k, \ldots, x_N) = \sum_{S\subset [N]\setminus\{k\}} \widehat{f}(S) x^S -
\sum_{S\subset [N],\,k \in S} \widehat{f}(S) x^S
\]
Since $\|f\|_{\infty} \leq 1$, both equalities combined yield to
\[
\Big|\sum_{S\subset [N]\setminus\{k\}} \widehat{f}(S) x^S\Big|
+ \Big|\sum_{S\subset [N],\,k \in S} \widehat{f}(S) x^S\Big| \leq 1
\]
and so the claim follows by
\[
|\widehat{f}(A)| + |\widehat{f}(B)|
\leq \mathbb{E}{\Big|\sum_{S\subset [N]\setminus\{k\}} \widehat{f}(S) x^S\Big|}
+  \mathbb{E}{\Big|\sum_{S\subset [N],\,k \in S} \widehat{f}(S) x^S\Big|} \leq 1. \qedhere
\]
\end{proof}

\noindent We are ready to give the proof of the main result.

\begin{proof}[Proof of Theorem \ref{Theo:BooleanRadius}] Let $f\colon \{\pm 1\}^{N} \rightarrow \R$ with $\|f\|_{\infty} \leq 1$.
Applying the estimate  from Lemma \ref{trick}, for every $\rho >0$ we have
\begin{align*}
\sum_{S \subset [N]}{|\widehat{f}(S)| \rho^{|S|}} & = |\widehat{f}(\emptyset)| + \sum_{S \neq \emptyset}{|\widehat{f}(S)| \rho^{|S|}} \\
& = |\widehat{f}(\emptyset)|  + (1 - |\widehat{f}(\emptyset)|) \sum_{k=1}^N \binom{N}{k} \rho^k \\
& = |\widehat{f}(\emptyset)|  + (1 - |\widehat{f}(\emptyset)|) \big((1 + \rho)^N - 1\big).
\end{align*}
If we put $\rho = 2^{1/N} - 1$, then the last term is equal to $1$ and so we conclude that
$
\rho(\mathcal{B}_{N}) \geq 2^{1/N} - 1.
$
Let us estimate now $\rho_{N}$ from above. We denote by $\mathbf{1} \in \{ \pm 1\}^N$ the
element whose entries  are all equal to $1$. Now observe that the function
$F\colon \{\pm 1\}^{N} \rightarrow \{\pm 1\}$ given by
\[
F(x) = \begin{cases} -1,\, & x = \mathbf{1}, \\
1, \, & x \neq \mathbf{1}
\end{cases}
\]
the Fourier expansion of $F$ is given by
\[
F(x) = 1 - \frac{1}{2^{N-1}} + \sum_{S \neq \emptyset}{\frac{-1}{2^{N-1}} \, x^{S}}, \quad\,  x \in \{ \pm 1\}^{N}.
\]
To see this, note that the function $f\colon \{\pm1\}^{N} \rightarrow \mathbb{R}$ given by $f(x)=1$
for $x=\mathbf{1}$ and $f(x)=0$ for $x \neq \mathbf{1}$ has the following Fourier expansion
\[
f(x) = \prod_{n=1}^{N}{\Big( \frac{1 + x_{n}}{2} \Big)} = \frac{1}{2^{N}} \sum_{S\subset [N]} x^S, \quad\, x\in \{\pm1\}^{N}\,.
\]
Since $F(x) = 1 - 2f(x)$ for all $x\in \{\pm1\}^N$, the desired Fourier expansion for $F$ follows.

Finally, it follows from the definition of the Boolean radius $\rho_{N} = \rho(\mathcal{B}_{N})$ that
\[
1 = \| F\|_{\infty} \geq 1 - \frac{1}{2^{N-1}} + \sum_{S \neq \emptyset}{\frac{1}{2^{N-1}} \rho_{N}^{|S|}} =
1 - \frac{1}{2^{N-1}} + \frac{1}{2^{N-1}} \big( (1 + \rho_{N})^N - 1\big).
\]
Thus, $(1 + \rho_N)^N - 1 \leq 1$, and this proves that $\rho_N = 2^{1/N} -1$.
\end{proof}

\section{Case 2: All homogeneous functions}
\label{sec:homogeneous}
The problem of estimating  the Boolean radius of the classes  $\mathcal{B}^{N}_{=m}$ and $\mathcal{B}_{\text{hom}}^{N}$
of homogeneous functions on the Boolean cube $\{\pm{1}\}^N$ is  more delicate than in the case of $\mathcal{B}^N$.

\begin{Theo}\label{Theo:BooleanRadiusmHomogeneous}
There is a constant $C>1$  such that for each $1 \leq m \leq N$
\[
c_m \, N^{\frac{1}{2m}} \binom{N}{m}^{\frac{-1}{2m}}
\leq  \rho(\mathcal{B}_{=m}^{N}) \,\leq\, C_m N^{\frac{1}{2m}} \binom{N}{m}^{\frac{-1}{2m}}\,,
\]
where
\[
c_m = \frac{1}{m^{\frac{1}{2m}}C^{\sqrt{\frac{\log{m}}{m}}}} \quad\, \mbox{ and } \quad\,  C_m = C^{\frac{1}{m}}.
\]
\end{Theo}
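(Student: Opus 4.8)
The plan is to prove the two bounds separately, each resting on a sharp inequality for $m$-homogeneous functions together with a sharp extremal (random) example.

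\textbf{Lower bound.} Fix $m$-homogeneous $f$ with $\|f\|_\infty\le 1$, so $\widehat f(\emptyset)=0$ (as $m\ge 1$) and hence for any $\rho\ge 0$,
\[
\sum_{S}|\widehat f(S)|\rho^{|S|}=\rho^m\sum_{|S|=m}|\widehat f(S)|.
\]
First I would invoke the Bohnenblust–Hille type inequality \eqref{Theo:PolyBH} mentioned in the introduction (the polynomial BH inequality on the Boolean cube), which says that there is a constant $B_m$ with $B_m=\bigl(1+o(1)\bigr)$-type growth — more precisely of the form $m^{\frac{1}{2m}}C^{\sqrt{(\log m)/m}}$ — such that
\[
\Bigl(\sum_{|S|=m}|\widehat f(S)|^{\frac{2m}{m+1}}\Bigr)^{\frac{m+1}{2m}}\le B_m\,\|f\|_\infty.
\]
Then I would interpolate: by Hölder with exponents matching the $\ell_1$, $\ell_{2m/(m+1)}$ and $\ell_2$ norms on the (at most $\binom Nm$ many) level-$m$ coefficients, combined with Parseval $\sum_{|S|=m}|\widehat f(S)|^2=\|f\|_2^2\le\|f\|_\infty^2\le 1$, one gets
\[
\sum_{|S|=m}|\widehat f(S)|\;\le\;B_m\,\binom Nm^{\frac{m-1}{2m}}\cdot \Bigl(\sum_{|S|=m}|\widehat f(S)|^2\Bigr)^{?}\le B_m\binom Nm^{\frac{m-1}{2m}},
\]
where the exact split of exponents I would pin down by a short computation (this is the routine part). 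Plugging in $\rho=c_m N^{\frac1{2m}}\binom Nm^{-\frac1{2m}}$ and using $\binom Nm\le N^m$ to absorb the $N^{1/(2m)}$ factor should force the sum to be $\le 1$, giving $\rho(\mathcal B^N_{=m})\ge c_m N^{\frac1{2m}}\binom Nm^{-\frac1{2m}}$ with $c_m$ of the stated form.

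\textbf{Upper bound.} Here I would exhibit a single $m$-homogeneous function whose Boolean radius is small. Take $f=\sum_{|S|=m}\varepsilon_S\, x^S$ with i.i.d.\ signs $\varepsilon_S=\pm1$; then $\sum_{|S|=m}|\widehat f(S)|=\binom Nm$, while the Salem–Zygmund type estimate (Lemma \ref{Lemm:SalemZygmund}) gives $\|f\|_\infty\le C\sqrt{m\log N}\,\binom Nm^{1/2}$ with positive probability — actually one wants the sharp constant, so I would use the version of Salem–Zygmund giving $\|f\|_\infty\le\bigl(1+o(1)\bigr)\sqrt{2m\log N}\,\binom Nm^{1/2}$ or, to get $C_m=C^{1/m}$, a more careful bound on the supremum of a random homogeneous Walsh polynomial. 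Then $\rho(f)$ satisfies $\rho(f)^m\binom Nm=\|f\|_\infty$, so
\[
\rho(f)=\Bigl(\tfrac{\|f\|_\infty}{\binom Nm}\Bigr)^{1/m}\le\Bigl(C\sqrt{m\log N}\,\Bigr)^{1/m}\binom Nm^{-\frac1{2m}}.
\]
The factor $(\sqrt{m\log N})^{1/m}$ must be rewritten as $C_m\,N^{1/(2m)}$: indeed $(\log N)^{1/(2m)}\le N^{1/(2m)}$ for $N\ge 2$ (and the case $N<2m$ or small $N$ can be checked separately using $\binom Nm$), and $m^{1/(2m)}\le C^{1/m}$ for a suitable absolute $C$. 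This yields $\rho(\mathcal B^N_{=m})\le C_m N^{\frac1{2m}}\binom Nm^{-\frac1{2m}}$ with $C_m=C^{1/m}$.

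\textbf{Main obstacle.} The delicate point is getting the constants $c_m\to 1$ and $C_m\to 1$, not merely boundedness. For the lower bound this forces using the best available constant in the Boolean Bohnenblust–Hille inequality (whose subexponential/subpolynomial growth is exactly what produces the $m^{\frac1{2m}}C^{\sqrt{(\log m)/m}}$ shape), and for the upper bound it forces a sharp Salem–Zygmund bound together with carefully tracking how $\sqrt{m\log N}$ converts into $N^{1/(2m)}$ after taking $m$-th roots. I would organize the write-up so that all the asymptotic bookkeeping in $m$ is isolated at the end, after the two inequalities are in place.
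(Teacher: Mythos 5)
Your skeleton is exactly the paper's: lower bound via the Boolean Bohnenblust--Hille inequality \eqref{Theo:PolyBH} plus H\"older over the $\binom{N}{m}$ level-$m$ coefficients, upper bound via a random $m$-homogeneous sign polynomial controlled by Lemma \ref{Lemm:SalemZygmund}. However, three of your concrete steps fail as written. First, in the lower bound the binomial estimate you invoke points the wrong way: H\"older and \eqref{Theo:PolyBH} give $\rho(\mathcal{B}^{N}_{=m})\ge C^{-\sqrt{(\log m)/m}}\,\binom{N}{m}^{\frac{1}{2m^{2}}}\binom{N}{m}^{-\frac{1}{2m}}$, and to turn $\binom{N}{m}^{\frac{1}{2m^{2}}}$ into $m^{-\frac{1}{2m}}N^{\frac{1}{2m}}$ you need the lower bound $\binom{N}{m}\ge (N/m)^{m}$, not $\binom{N}{m}\le N^{m}$; relatedly, $m^{\frac{1}{2m}}C^{\sqrt{(\log m)/m}}$ is not the BH constant (which is $C^{\sqrt{m\log m}}$, unbounded) but what emerges only after taking $m$-th roots and applying that binomial bound, and no interpolation with Parseval is needed.

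Second, the Salem--Zygmund estimate you propose, $\|f\|_\infty\le C\sqrt{m\log N}\,\binom{N}{m}^{1/2}$, is false on the Boolean cube: already for $m=1$ every choice of signs gives $\big\|\sum_k\varepsilon_k x_k\big\|_\infty=N=\sqrt{N}\big(\sum_S c_S^2\big)^{1/2}$, so nothing better than the $\sqrt{N}$ factor of Lemma \ref{Lemm:SalemZygmund} (a union bound over the $2^N$ points of the cube) can hold. Using that lemma as stated, the upper bound is immediate and clean: $\rho^{m}\binom{N}{m}\le\big\|\sum_{|S|=m}\xi_S x^S\big\|_\infty\le 6\sqrt{\log 2}\,\sqrt{N}\,\binom{N}{m}^{1/2}$ yields $C_m=C^{1/m}$ with no conversion of $\log N$ into $N$ at all. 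Third, your final absorption $m^{\frac{1}{2m}}\le C^{\frac1m}$ for an absolute $C$ is false (it would require $C\ge\sqrt{m}$); since $m^{\frac{1}{2m}}\to 1$ such a stray factor would still be harmless for Corollary \ref{Theo:BooleanRadiusHomogeneous}, but it does not give the stated form $C_m=C^{1/m}$. All three issues are repaired by following the tools you already cite: the $\sqrt{N}$ version of the random sign lemma for the upper bound, and the estimate $\binom{N}{m}\ge (N/m)^m$ for the lower bound.
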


Before we turn to consequences of this result and its proof, let us again compare with the analog results known for the $N$-dimensional torus $\mathbb{T}^N$ (instead of the Boolean cube $\{\pm 1\}^N$). If we replace in \eqref{BoKa} all polynomials by all $m$-homogeneous
polynomials and define the $m$th Bohr radius $K_N^{=m}$ as the best $r$ in this inequality, then it is implicitly proved in \cite{Bohrradius} and \cite{Annals} that there is an absolute constant $D > 1$ which  for all $m,N$ satisfies
\begin{equation} \label{Maria}
\frac{1}{D}\Big(\frac{m}{N}\Big)^{\frac{m-1}{2m}} \leq  K_N^{=m} \leq D \Big(\frac{m}{N}\Big)^{\frac{m-1}{2m}}\,.
\end{equation}
and hence  for fixed $m$ the asymptotic decay of the   Bohr radius $K_N^{=m}$ and the Boolean radius $\rho(\mathcal{B}_{=m}^{N})$ coincide.

The control of the decay of $c_m$ and $C_m$ as $m\to \infty$ will give the following.

\begin{Coro}\label{Theo:BooleanRadiusHomogeneous}
\[
\lim_{N \rightarrow \infty } \sqrt{\tfrac{N}{\log{N}}}\,\rho(\mathcal{B}_{{\rm{hom}}}^{N}) =1\,.
\]
\end{Coro}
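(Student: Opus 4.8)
The plan is to deduce the limit from the two-sided bound of Theorem \ref{Theo:BooleanRadiusmHomogeneous} by analyzing how the quantity $N^{\frac{1}{2m}}\binom{N}{m}^{-\frac{1}{2m}}$ behaves when $m$ is allowed to grow with $N$. First I would record that $\rho(\mathcal{B}_{\mathrm{hom}}^N) = \inf_{0\le m\le N}\rho(\mathcal{B}_{=m}^N)$, since a homogeneous function is $m$-homogeneous for some $m$, so the Boolean radius of the whole homogeneous class is the infimum over levels. Thus it suffices to understand $\inf_m N^{\frac{1}{2m}}\binom{N}{m}^{-\frac{1}{2m}}$ up to the multiplicative factors $c_m, C_m$, which by hypothesis both tend to $1$ as $m\to\infty$ and are bounded away from $0$ and $\infty$ on any fixed range of $m$.

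The heart of the matter is the elementary estimate for $g_N(m) := N^{\frac{1}{2m}}\binom{N}{m}^{-\frac{1}{2m}}$. Using $\binom{N}{m}\ge (N/m)^m$ and $\binom{N}{m}\le (eN/m)^m$ one gets
\[
\Big(\frac{m}{eN}\Big)^{1/2} N^{\frac{1}{2m}} \;\le\; g_N(m) \;\le\; \Big(\frac{m}{N}\Big)^{1/2} N^{\frac{1}{2m}}\,.
\]
Since $N^{\frac{1}{2m}} = \exp\!\big(\frac{\log N}{2m}\big)$, both sides are, up to a bounded factor, of the order $\sqrt{m/N}\,\exp(\tfrac{\log N}{2m})$, and this expression is minimized (over real $m>0$) at $m \asymp \log N$, where its value is $\Theta\!\big(\sqrt{(\log N)/N}\big)$. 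So I would choose $m = m_N := \lfloor \log N\rfloor$ (or the nearest integer where the bound is cleanest): for this choice $c_{m_N}$ and $C_{m_N}$ both converge to $1$ because $m_N\to\infty$, and $g_N(m_N) = \sqrt{(\log N)/N}\,(1+o(1))$ after a short computation with Stirling's formula $\log\binom{N}{m_N} = m_N\log(N/m_N) + m_N + O(\log N)$, noting $m_N\log m_N = o(\sqrt{N\log N}\,)$ contributions vanish after taking the $\tfrac{1}{2m_N}$-th root. This gives the upper bound $\rho(\mathcal{B}_{\mathrm{hom}}^N)\le \rho(\mathcal{B}_{=m_N}^N)\le C_{m_N} g_N(m_N) = \sqrt{(\log N)/N}\,(1+o(1))$.

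For the matching lower bound I would show that \emph{no} value of $m$ beats $\sqrt{(\log N)/N}$ by more than a $(1+o(1))$ factor, i.e. $\inf_{1\le m\le N} c_m\, g_N(m) \ge \sqrt{(\log N)/N}\,(1-o(1))$ (the case $m=0$ gives $\rho=\infty$ and is irrelevant). Split the infimum into two ranges. For $m$ in a bounded range $1\le m\le M_0$ with $M_0$ fixed, $c_m$ is bounded below by a positive constant and $g_N(m)\ge (m/(eN))^{1/2}N^{\frac{1}{2m}}\ge (eN)^{-1/2}N^{1/2}= e^{-1/2}$ — far larger than $\sqrt{(\log N)/N}$ for large $N$ — so this range is harmless, and in fact one can push $M_0$ to grow slowly. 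For $m$ large (say $m\ge$ a slowly growing threshold), $c_m = m^{-1/(2m)}C^{-\sqrt{(\log m)/m}} = 1-o(1)$ uniformly, and the lower estimate $g_N(m)\ge \sqrt{m/(eN)}\,\exp(\tfrac{\log N}{2m})$ must be minimized over $m$; calculus (or the AM–GM-style inequality $a+b\ge 2\sqrt{ab}$ applied to $\tfrac12\log(m/N)$ and $\tfrac{\log N}{2m}$ after writing $\log g_N(m)$) shows the minimum over $m\ge 1$ of $\tfrac12\log m - \tfrac12\log N + \tfrac{\log N}{2m} + O(1/m)$ is $\tfrac12\log\log N - \tfrac12\log N + O(1)$, i.e. $g_N(m)\ge c\sqrt{(\log N)/N}$ for an absolute $c>0$. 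To upgrade $c$ to $1-o(1)$ one sharpens the constant: the only place a factor is lost is in $\binom{N}{m}\le (eN/m)^m$; replacing it by the sharper $\binom{N}{m}\le \big(\frac{eN}{m}\big)^m e^{-m^2/(2N)+\cdots}$ is not even needed, because near the optimum $m\asymp\log N$ one has $m = o(N)$, so $\binom{N}{m} = \frac{N^m}{m!}(1+o(1))$ and $g_N(m) = \big(\frac{m!}{N^{m-1}}\big)^{1/(2m)}(1+o(1)) = \sqrt{m/(eN)}\,N^{\frac{1}{2m}}(1+o(1))$ by Stirling — so the upper and lower analyses of $g_N(m)$ agree asymptotically, pinning the infimum at $\sqrt{(\log N)/N}\,(1+o(1))$.

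The main obstacle is purely bookkeeping: making the $o(1)$ uniform across the whole window of $m$ near $\log N$ that nearly attains the infimum, while simultaneously exploiting that $c_m, C_m\to 1$ there. Concretely, one must check that for every sequence $m = m_N$ with $m_N\to\infty$ and $m_N = O(\log N)$ (and $m_N\to\infty$ slowly enough that it does not matter), both $c_{m_N}\to 1$ and the Stirling remainder in $g_N(m_N)$ is $1+o(1)$ after the $\tfrac{1}{2m_N}$-th root — the latter because $\log\binom{N}{m_N} = m_N\log(N/m_N)+m_N + O(\log m_N + \log N)$ and dividing the $O(\cdot)$ by $m_N$ and exponentiating gives $1+o(1)$ exactly when $m_N\to\infty$. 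Once uniformity is in hand, combining the upper bound at $m_N=\lfloor\log N\rfloor$ with the lower bound valid for all $m$ yields $\rho(\mathcal{B}_{\mathrm{hom}}^N) = \sqrt{(\log N)/N}\,(1+o(1))$, which is the claim.
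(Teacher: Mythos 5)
Your plan is essentially the paper's own proof: it starts from $\rho(\mathcal{B}_{\mathrm{hom}}^N)=\inf_{0\le m\le N}\rho(\mathcal{B}_{=m}^N)$, obtains the upper bound from Theorem \ref{Theo:BooleanRadiusmHomogeneous} at $m=\lfloor\log N\rfloor$ via Stirling, and obtains the lower bound by minimizing the lower estimate over $m$ using $\binom{N}{m}\le (eN/m)^m$, split into a bounded range of $m$ (where the constants $c_m$ do not matter because $g_N(m)=N^{\frac{1}{2m}}\binom{N}{m}^{-\frac{1}{2m}}$ is much larger than $\sqrt{(\log N)/N}$) and a large-$m$ range (where $c_m\to 1$ and the relevant exponent is minimized at $m=\log N$ with value zero, so in fact no absolute constant is lost and your proposed extra sharpening near the optimum is unnecessary). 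Two intermediate statements should be corrected, though neither affects the argument: the chain $g_N(m)\ge (m/(eN))^{1/2}N^{\frac{1}{2m}}\ge e^{-1/2}$ is valid only for $m=1$ (for $1\le m\le M_0$ one instead has $g_N(m)\ge e^{-1/2}N^{\frac{1}{2M_0}-\frac{1}{2}}$, which still dominates $\sqrt{(\log N)/N}$), and the Stirling remainder in $\log\binom{N}{m_N}$ must be taken as $O(\log m_N+m_N^2/N)$ rather than $O(\log N)$, since an $O(\log N)$ error divided by $2m_N\asymp 2\log N$ is only $O(1)$ and would not yield the factor $1+o(1)$ at the optimal scale $m_N\asymp\log N$.
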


\vspace{2 mm}

Based on the above formula we come to the essential conclusion that the study of Boolean
radii and Bohr radii show a~remarkable difference. Indeed, note that
\begin{equation} \label{Maria1}
\lim_{N \rightarrow \infty } \sqrt{\tfrac{N}{\log{N}}}\, K_N^{\text{hom}} =1\,,
\end{equation}
where the definition of $K_N^{\text{hom}}$ just means to consider in \eqref{BoKa} only homogeneous polynomials instead of all polynomials on $\mathbb{T}^N$
(see again \cite{Bohrradius} and \cite{Annals}).
So in case of polynomials on $\mathbb{T}^N$ Bohr radii do not distinguish between all polynomials (see  \eqref{best}) and the homogeneous
ones (see \eqref{Maria1}). But in case of functions on $\{\pm 1\}^N$, the preceding corollary and the result from Theorem \ref{Theo:BooleanRadius}
show a~dramatic difference. The deeper reason for this difference  is that the distortion between a function $f: \{ \pm1\}^{N} \rightarrow \mathbb{R}$
and its homogeneous parts $$f_{m}(x)= \sum_{S \subset [N],|S|=m} \widehat{f}(S) x^S, \quad\, 0\leq m \leq N $$  becomes visible if we compare
their supremum norms. We have that $$\|f_m\|_{\infty} \leq C^m \|f\|_{\infty}\,,$$ where the best constant $C$ is at least $\ge \sqrt{2}$ (see \cite{DeMaPe}), whereas by Cauchy inequalities the analog result for complex polynomials on $\mathbb{T}^N$ allows a~constant $1$.
\\

For the proofs of Theorem \ref{Theo:BooleanRadiusmHomogeneous} and Corollary \ref{Theo:BooleanRadiusHomogeneous} we  again need  some preliminary
results. The following inequality is a~recent result from \cite[Theorem 1.1]{DeMaPe}, and we here  state it for  the sake of completeness:
There is an absolute constant $C > 0$ such that for every function $f\colon \{ \pm 1\}^{N} \rightarrow \mathbb{R}$ of degree
$d$ we have that
\begin{equation}\label{Theo:PolyBH}
 \big(\sum_{|S| \leq d}{|\widehat{f}(S)|^{\frac{2d}{d+1}}}\big)^{\frac{d+1}{2d}} \leq C^{\sqrt{d \, \log{d}}} \| f\|_{\infty}.
\end{equation}
The relevant feature for us is that the constant involved has subexponential growth in the sense that
\[
\lim_{d\to \infty}{\big(C^{\sqrt{d \log{d}}}\big)^{\frac{1}{d}}} = 1,
\]
which will play a~fundamental role in the proof of Theorem \ref{Theo:BooleanRadiusHomogeneous}. The second result is of probabilistic nature. In fact, it  is a~consequence of a~celebrated result, and  it will be crucial to  estimate $\rho(\mathcal{B}_{m}^{N})$ from above.

\begin{Lemm}\label{Lemm:SalemZygmund}
For each $N \in \N$ and every family of real numbers $(c_{S})_{S \subset [N]}$, there exists a~choice of signs $(\xi_{S})_{S \subset [N]}$ in $\{\pm 1\}$ such that
\begin{equation}
\Big\| \sum_{S \subset [N]}{\xi_{S} \, c_{S} \, x_{S}} \Big\|_{\infty} \leq 6 \sqrt{\log{2}} \, \sqrt{N} \, \Big( \sum_{S \subset [N]}{|c_{S}|^{2}} \Big)^{\frac{1}{2}}.
\end{equation}
\end{Lemm}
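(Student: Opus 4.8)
The plan is to deduce this from the classical Salem--Zygmund inequality, which controls the sup-norm of a random trigonometric/Walsh polynomial with independent random signs. Concretely, consider the random function $R_\xi(x) = \sum_{S \subset [N]} \xi_S c_S x^S$, where $(\xi_S)$ are independent Rademacher variables; the goal is to show that the expected value (or a suitable high-probability event) of $\|R_\xi\|_\infty$ does not exceed $6\sqrt{\log 2}\,\sqrt N\,\big(\sum_S |c_S|^2\big)^{1/2}$, so that at least one realization of the signs works.

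First I would normalize and set $A = \big(\sum_S |c_S|^2\big)^{1/2}$, assuming $A>0$ (the case $A=0$ being trivial). Fix a point $x \in \{\pm 1\}^N$: then $R_\xi(x) = \sum_S \xi_S c_S x^S$ is a sum of independent mean-zero random variables bounded by $|c_S|$, so by Hoeffding's inequality (subgaussian tail) one gets $\mathbb{P}(|R_\xi(x)| \geq t) \leq 2\exp(-t^2/(2A^2))$ for every $t>0$. Next I would take a union bound over all $2^N$ points of the cube: $\mathbb{P}\big(\max_{x} |R_\xi(x)| \geq t\big) \leq 2^{N+1}\exp(-t^2/(2A^2))$. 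Choosing $t = \lambda \sqrt N A$ makes the right-hand side equal to $2^{N+1} \exp(-\lambda^2 N/2)$, which is strictly less than $1$ as soon as $\lambda^2/2 > \log 2$ (with a bit of room for the ``$+1$''), i.e. roughly $\lambda > \sqrt{2\log 2}$. Thus for such $\lambda$ there exists a choice of signs with $\|R_\xi\|_\infty \leq \lambda \sqrt N A$.

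To land the explicit constant $6\sqrt{\log 2}$ stated in the lemma, I would just be slightly generous in the estimates: pick, say, $t = 6\sqrt{\log 2}\,\sqrt N\, A$, so that $t^2/(2A^2) = 18 (\log 2) N$, giving probability bound $2^{N+1} \cdot 2^{-18N} = 2^{1-17N} < 1$ for all $N \geq 1$. Hence the event $\{\|R_\xi\|_\infty \leq 6\sqrt{\log 2}\sqrt N A\}$ has positive probability, and a sign pattern $(\xi_S)$ realizing it is precisely what we need. (One could optimize the constant considerably, but the statement only asks for $6\sqrt{\log 2}$, and any constant exceeding $\sqrt{2\log 2}$ suffices for the applications.)

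The only mildly delicate point is getting the right subgaussian constant in Hoeffding's inequality for a weighted Rademacher sum $\sum_S \xi_S c_S x^S$: since $|c_S x^S| = |c_S|$, the standard bound gives $\mathbb{P}(|\sum_S \xi_S c_S x^S| \geq t) \leq 2\exp\!\big(-\tfrac{t^2}{2\sum_S |c_S|^2}\big)$, so there is no real obstacle here — the argument is a textbook union bound. The ``hard part'', such as it is, is merely bookkeeping the constants so that $2^{N+1}$ is absorbed, which the choice $6\sqrt{\log 2}$ handles comfortably.
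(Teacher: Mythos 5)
Your argument is correct: Hoeffding's inequality gives the subgaussian tail $\mathbb{P}\bigl(|R_\xi(x)|\geq t\bigr)\leq 2\exp\bigl(-t^{2}/(2A^{2})\bigr)$ at each fixed point, the union bound over the $2^{N}$ points of the cube costs only a factor $2^{N}$, and with $t=6\sqrt{\log 2}\,\sqrt{N}\,A$ the failure probability is $2^{1-17N}<1$, so a good sign pattern exists; the bookkeeping is right (in fact your method would already give the constant $2\sqrt{\log 2}$). The paper takes a different, citation-based route: it invokes a Salem--Zygmund type theorem from Kahane's book (Theorem 1, p.~68), feeding into it the elementary observation that for any function on the cube the level set $\{|f|\geq \tfrac12\|f\|_{\infty}\}$ has normalized counting measure at least $2^{-N}$; this yields $\mathbb{P}\bigl\{\|\sum_S \xi_S c_S \chi_S\|_{\infty}\geq 3(\log(2^{N+3})\sum_S|c_S|^{2})^{1/2}\bigr\}\leq \tfrac12$, and the stated constant follows since $3\sqrt{(N+3)\log 2}\leq 6\sqrt{\log 2}\,\sqrt N$ for $N\geq 1$. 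The two arguments rest on the same concentration phenomenon, but yours is self-contained and elementary (no external maximal inequality needed) and gives a slightly better constant, while the paper's is shorter once the reference is granted and illustrates the general mechanism (converting a sup-norm into a level-set of measure $\geq 2^{-N}$) that works for arbitrary probability spaces, not just the finite cube.
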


\begin{proof}
We make use of  \cite[p. 68, Theorem 1]{Kahane} (see also \cite[Theorem 8, p.~302]{Blei01}). Every
$f \colon\{ \pm 1\}^{N} \rightarrow \mathbb{R}$ clearly satisfies that
\begin{equation}
\label{equa:halfSupremum}
\sigma\Big\{x \in \{\pm 1\}^{N} \colon |f(x)| \geq \frac{1}{2}\|f\|_{\infty} \Big\} \geq \frac{1}{2^N}.
\end{equation}
If $(\xi_{S})_{S \subset [N]}$ are independent random variables on a probability space $(\Omega, \Sigma, \mathbb{P})$
taking the values $1$ and $-1$ with equal probability, then using \eqref{equa:halfSupremum} in the aforementioned theorem we get that
\[
\mathbb{P} \Big\{ \big\|\sum_{S \subset [N]}{ \xi^{S} \, c_{S} \, \chi_{S} }\big\|_{\infty}
\geq 3 \big( \log{(2^{N+3})} \sum_{S \subset [N]}{|c_{S}|^{2}} \big)^{1/2} \Big\} \leq \frac{1}{2}
\]
which leads to the desired conclusion.
\end{proof}

\begin{proof}[Proof of Theorem \ref{Theo:BooleanRadiusmHomogeneous}]
We start by giving an upper bound for $\rho=\rho(\mathcal{B}_{m}^{N})$, for which we make use of Lemma \ref{Lemm:SalemZygmund}: considering the family $(c_{S})_{S \subset [N]}$ where $c_{S} = 1$ if $|S| = m$ and $c_{S}=0$ otherwise, there exist $(\xi_{S})_{S \subset [N]}$ in $\{ \pm 1 \}$ such that
\[
\rho^{m} \binom{N}{m} =\sum_{|S| = m}{\rho^{m}} \leq \Big\| \sum_{|S| = m}{\xi_{S} \, x_{S}} \Big\|_{\infty} \leq 6 \sqrt{\log{2}} \, \sqrt{N} \, \binom{N}{m}^{\frac{1}{2}}.
\]
Thus the upper estimate follows from this inequality with an absolute constant $C$. We now prove the lover estimate. Combining
Theorem \ref{Theo:PolyBH} with H\"{o}lder's inequality we get that for every $m$-homogeneous function $f\colon \{ \pm 1\}^{N} \rightarrow \mathbb{R}$,
\[
\sum_{|S|=m}{|\widehat{f}(S)|} \leq  \bigg(\sum_{|S|=m}{|\widehat{f}(S)|^{\frac{2m}{m+1}}}\bigg)^{\frac{m+1}{2m}} \binom{N}{m}^{\frac{m-1}{2m}}
\leq  C^{\sqrt{m \log{m}}} \binom{N}{m}^{\frac{m-1}{2m}} \| f \|_{\infty}.
\]
As a consequence
\[
\sum_{|S|=m}{|\widehat{f}(S)| \rho^{m}} \leq \| f\|_{\infty} \hspace{3mm} \mbox{ for } \hspace{3mm} \rho^{m} =  C^{-\sqrt{m \log{m}}} \binom{N}{m}^{\frac{1-m}{2m}}.
\]
Since $\binom{N}{m} \geq (\frac{N}{m})^{m}$ for each integer $1\leq m\leq N$, we conclude that
\[
\rho(\mathcal{B}^{N}_{=m}) \geq C^{-\sqrt{\frac{\log{m}}{m}}} \binom{N}{m}^{\frac{1}{2m^{2}}} \binom{N}{m}^{-\frac{1}{2m}}
\geq \frac{1}{ m^{\frac{1}{2m}} C^{\sqrt{\frac{\log{m}}{m}}} } N^{\frac{1}{2m}} \binom{N}{m}^{-\frac{1}{2m}}
\]
and this completes the proof.
\end{proof}

\begin{proof}[Proof of Corollary \ref{Theo:BooleanRadiusHomogeneous}]
Note that $\mathcal{B}_{\text{hom}}^{N}$ is the union of all families $\mathcal{B}^{N}_{=m}$, which immediately leads to the relation
\[
\rho(\mathcal{B}_{\text{hom}}^{N}) = \inf_{0 \leq m \leq N}{\rho(\mathcal{B}^{N}_{=m})}.
\]
Using the estimations $\sqrt{2 \pi} n^{n+\frac{1}{2}} e^{-n} \leq n! \leq e n^{n + \frac{1}{2}} e^{-n}$ valid for all $n \geq 1$,
we can bound for each $1 \leq m \leq N$
\[
\binom{N}{m} \geq \frac{\sqrt{2 \pi}}{e^{2}} \left( \frac{N}{m} \right)^{m} \, \left( \frac{N}{N-m} \right)^{N-m} \, \sqrt{\frac{N}{m (N-m)}}.
\]
Applying this inequality in the upper estimation of Theorem \ref{Theo:BooleanRadiusmHomogeneous} we get that
\[
\rho(\mathcal{B}^{N}_{=m}) \leq C_{m} N^{\frac{1}{2m}} \left( \frac{e^{2}}{\sqrt{2 \pi}} \right)^{\frac{1}{2m}} \sqrt{\frac{m}{N}} \left( 1 - \frac{m}{N}\right)^{\frac{1}{2} (\frac{N}{m} - 1)} \left( \frac{m (N-m)}{N} \right)^{\frac{1}{4m}}
\]
Taking $m =[\log{N}]$, the previous inequality leads to
\begin{equation*}
\rho(\mathcal{B}_{\text{hom}}^{N}) \leq \rho(\mathcal{B}^{N}_{=[\log{N}]}) \leq \sqrt{\frac{\log{N}}{N}} (1 + o(1)).
\end{equation*}
To prove the converse estimation, we recall now the lower estimation given in Theorem \ref{Theo:BooleanRadiusmHomogeneous} which combined with the inequality $\binom{N}{m} \leq \big( \frac{N e}{m}\big)^{m}$ valid for each $1 \leq m \leq N$ yields that
\begin{equation}\label{equa:auxHomogeneousBoolean1} \frac{\rho(\mathcal{B}^{N}_{=m})}{ \sqrt{\frac{\log{N}}{N}}}  \geq \frac{N^{\frac{1}{2m}} \sqrt{m}}{C_{m} \sqrt{\log{N}} \sqrt{e}} = \frac{1}{C_{m}} \exp{\left( \frac{\log{N}}{2m} - \frac{1}{2} + \frac{\log{m}}{2} -\frac{\log{\log{N}}}{2} \right)}.
\end{equation}
Let $\varepsilon > 0$, and fix $m_{0} \in \N$ satisfying that $C_{m}^{-1} > 1 - \varepsilon$ whenever $m \geq m_{0}$. Using \eqref{equa:auxHomogeneousBoolean1} we deduce the existence of some $N_{0} \in \N$ such that
\begin{equation}\label{equa:auxBooleanRadiusHomogeneous2}
\inf_{ 0 \leq m \leq m_{0}}{\rho(\mathcal{B}^{N}_{=m})} \geq \sqrt{\frac{\log{N}}{N}} (1 - \varepsilon), \quad\, N \geq N_{0}.
\end{equation}
On the other hand, the function
\[
\mathbb{R}_{+}  \ni x \longmapsto \frac{\log{N}}{2x} - \frac{1}{2} + \frac{\log{x}}{2} - \frac{\log{\log{N}}}{2}
\]
reaches its minimum at $x=\log{N}$ with value equal to zero. Thus, again by \eqref{equa:auxHomogeneousBoolean1}
\begin{equation}\label{equa:auxBooleanRadiusHomogeneous3}
\inf_{m_{0} \leq m \leq N} \rho(\mathcal{B}^{N}_{=m}) \geq \sqrt{\frac{\log{N}}{N}} \inf_{m_{0} \leq m \leq N}{\frac{1}{C_{m}}} \geq \sqrt{\frac{\log{N}}{N}} (1- \varepsilon)
\end{equation}
for each $N$. Combining \eqref{equa:auxBooleanRadiusHomogeneous2} and \eqref{equa:auxBooleanRadiusHomogeneous3} we conclude that
\[  \rho(\mathcal{B}^{N}_{\text{hom}}) \geq \inf_{0 \leq m\leq N}{ \rho(\mathcal{B}_{=m}^{N})} \geq \sqrt{\frac{\log{N}}{N}} (1- \varepsilon), \quad\,
N \geq N_{0}.
\]
\end{proof}

\section{Case 3: All degree-$d$ functions}
\label{sec:degreed}
Cauchy inequalities allow to extend \eqref{Maria} to the degree-$d$ case
\begin{equation} \label{Maria2}
\frac{1}{D}\Big(\frac{m}{N}\Big)^{\frac{d-1}{2d}} \leq  K_N^{\leq d} \leq D \Big(\frac{m}{N}\Big)^{\frac{d-1}{2d}}\,,
\end{equation}
where the definition of $K_N^{\leq d}$ by now is obvious. Unfortunately, the information we have on the Boolean case is less precise.

\begin{Theo}\label{Theo:BooleanRadiusDegree}
Let $1 \leq d \leq N$. Then, there are absolute constants $c_{d}$, $C_{d} > 0$ satisfying that
\begin{equation}\label{equa:BooleanRadiusDegree}
c_{d}\frac{1}{N^{\frac{1}{2}}} \leq \rho(\mathcal{B}^{N}_{\leq d}) \leq C_{d} \frac{1}{N^{\frac{d-1}{2d}}}.
\end{equation}
\end{Theo}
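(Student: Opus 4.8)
The plan is to establish the two estimates in \eqref{equa:BooleanRadiusDegree} by rather different means: the upper bound will be a direct consequence of the homogeneous case, while the lower bound will follow from a Wiener-type $\ell_2$-inequality for functions of bounded degree, combined with Cauchy--Schwarz.

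\textbf{Upper bound.} Since every $d$-homogeneous function has degree at most $d$, we have $\mathcal{B}^{N}_{=d}\subseteq\mathcal{B}^{N}_{\le d}$, hence $\rho(\mathcal{B}^{N}_{\le d})\le\rho(\mathcal{B}^{N}_{=d})$. Plugging $m=d$ into the upper estimate of Theorem~\ref{Theo:BooleanRadiusmHomogeneous} and using $\binom{N}{d}\ge (N/d)^{d}$ gives
\[
\rho(\mathcal{B}^{N}_{\le d})\le C_{d}\,N^{\frac{1}{2d}}\binom{N}{d}^{-\frac{1}{2d}}\le C_{d}\,\sqrt{d}\;N^{\frac{1}{2d}-\frac12}=C_{d}\,\sqrt{d}\;\frac{1}{N^{\frac{d-1}{2d}}},
\]
which is the asserted bound after renaming the constant.

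\textbf{Lower bound.} Let $f\colon\{\pm1\}^{N}\to\mathbb{R}$ have degree at most $d$ and $\|f\|_{\infty}\le1$, and write $a_{0}=|\widehat{f}(\emptyset)|\le1$. For $\rho>0$, Cauchy--Schwarz (using $\deg f\le d$) together with the F.~Wiener-type inequality announced in the introduction yields
\[
\sum_{S\subset[N]}|\widehat{f}(S)|\rho^{|S|}=a_{0}+\sum_{1\le|S|\le d}|\widehat{f}(S)|\rho^{|S|}\le a_{0}+2e^{d}(1-a_{0})\Big(\sum_{k=1}^{d}\binom{N}{k}\rho^{2k}\Big)^{1/2}.
\]
Choosing $\rho^{2}=\frac{1}{8e^{2d}N}$ and estimating $\binom{N}{k}\le N^{k}/k!$ together with $e^{x}-1\le2x$ on $[0,1]$, one gets $\sum_{k=1}^{d}\binom{N}{k}\rho^{2k}\le e^{N\rho^{2}}-1=e^{1/(8e^{2d})}-1\le\frac{1}{4e^{2d}}$, so the last square root is at most $\frac{1}{2e^{d}}$ and the whole sum is at most $a_{0}+(1-a_{0})=1$. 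Hence $\rho(\mathcal{B}^{N}_{\le d})\ge\rho=\frac{1}{2\sqrt2\,e^{d}}\cdot\frac{1}{\sqrt N}$, i.e. the lower bound holds with $c_{d}=1/(2\sqrt2\,e^{d})$.

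\textbf{The main obstacle.} Granting the Wiener-type inequality $\big(\sum_{S\neq\emptyset}|\widehat{f}(S)|^{2}\big)^{1/2}\le 2e^{d}(1-|\widehat{f}(\emptyset)|)$, everything above is routine; proving that inequality is where the real difficulty lies. Observe that Parseval alone only gives $\sum_{S\neq\emptyset}|\widehat{f}(S)|^{2}\le 1-a_{0}^{2}$, an $\ell_{2}$-bound of order $\sqrt{1-a_{0}}$, which is useless in the argument above as $a_{0}\to1$ --- one genuinely needs the first power $1-a_{0}$, and this extra strength must be extracted from the degree restriction. I would proceed in the spirit of the classical F.~Wiener and Carathéodory arguments: after normalising so that $\widehat{f}(\emptyset)=a_{0}\ge0$, the one-sided bound $f-a_{0}\le 1-a_{0}$ combined with $\mathbb{E}[f-a_{0}]=0$ forces $\|f-a_{0}\|_{1}\le 2(1-a_{0})$, and then a reverse ($L_{1}$-to-$L_{2}$) estimate for degree-$\le d$ functions --- of the form $\|g\|_{2}\le \kappa^{d}\|g\|_{1}$, obtainable from Bonami--Beckner hypercontractivity by interpolation --- upgrades this to the required $\ell_{2}$-bound; the precise value of the constant is immaterial for \eqref{equa:BooleanRadiusDegree}, so the argument already works with any $\kappa^{d}$ in place of the sharp $e^{d}$. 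The Carathéodory-type corollaries and the primal theorem referred to in the introduction presumably package exactly such estimates, with the constant optimised.
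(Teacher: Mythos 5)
Your argument is correct, and its overall architecture coincides with the paper's: the upper bound via $\rho(\mathcal{B}^{N}_{\le d})\le\rho(\mathcal{B}^{N}_{=d})$ and Theorem \ref{Theo:BooleanRadiusmHomogeneous}, and the lower bound via Cauchy--Schwarz together with the inequality $\bigl(\sum_{S\neq\emptyset}|\widehat{f}(S)|^{2}\bigr)^{1/2}\le 2e^{d}\,(1-|\widehat{f}(\emptyset)|)$, which is exactly Corollary \ref{Coro:CaratheodoryTypeIneq2} (the paper takes $\rho=C/\sqrt{N}$ and sums a geometric series where you use $e^{x}-1\le 2x$; both choices give $c_{d}$ of order $e^{-d}$). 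Where you genuinely diverge is in the proof of that Wiener--Carath\'eodory-type inequality, which you rightly identify as the main obstacle. The paper first proves the pointwise statement of Theorem \ref{Theo:MAINsumlessthanone} by averaging $f$ over all sign-flips indexed by subsets $A\subset[N]$, deduces the $L_{1}$ bound $\mathbb{E}\bigl|\sum_{S\neq\emptyset}\widehat{f}(S)x^{S}\bigr|\le 2(1-|\widehat{f}(\emptyset)|)$ of Corollary \ref{Coro:CaratheodoryTypeIneq}, and then applies the hypercontractive comparison $\|g\|_{2}\le e^{d}\|g\|_{1}$ for degree-$d$ functions (cited from the literature, just as you propose to obtain it from Bonami--Beckner). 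Your route to the $L_{1}$ bound is shorter and more elementary: after normalising so that $\widehat{f}(\emptyset)=a_{0}\ge 0$, the function $g=f-a_{0}$ has mean zero, hence $\|g\|_{1}=2\,\mathbb{E}[g^{+}]\le 2(1-a_{0})$ because $g\le 1-a_{0}$ pointwise. What this shortcut does not give is the stronger pointwise inequality \eqref{equa:MAINsumlessthanone}, which the paper records as a result of independent interest; but for Theorem \ref{Theo:BooleanRadiusDegree} only the averaged $L_{1}$ version is needed, so nothing is lost and your argument in fact streamlines this step.
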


\vspace{2 mm}

\noindent
 We point out that the upper bound will follow easily from the results in the previous
section, the lower estimate requires some preliminary work that may be of independent interest.

\begin{Theo} \label{Theo:MAINsumlessthanone}
For every map $f:\{ \pm 1\}^{N} \rightarrow \mathbb{R}$ and each $x \in \{ \pm 1\}^{N}$ we have that
\begin{equation}\label{equa:MAINsumlessthanone}
\Big| \widehat{f}(\emptyset) + \frac{1}{2} \sum_{S \neq \emptyset}{\widehat{f}(S) \, x^{S}} \Big| + \Big| \frac{1}{2}
\sum_{S \neq \emptyset}{\widehat{f}(S) \, x^{S}} \Big| \leq \| f\|_{\infty}.
\end{equation}
\end{Theo}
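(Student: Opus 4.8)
The plan is to exploit the same symmetrization idea already used in Lemma~\ref{trick}, but applied simultaneously to a well-chosen combination of shifted copies of $f$. Write $g(x) = \widehat{f}(\emptyset) + \tfrac{1}{2}\sum_{S\neq\emptyset}\widehat{f}(S)x^S$ and $h(x) = \tfrac12\sum_{S\neq\emptyset}\widehat{f}(S)x^S$, so $g = \widehat f(\emptyset) + h$ and the claim is $|g(x)| + |h(x)| \le \|f\|_\infty$ for every $x$. The natural observation is that $h(x) = \tfrac12\big(f(x) - \widehat f(\emptyset)\big) + \tfrac12\big(f(x)-\widehat f(\emptyset)\big)$ is not directly useful, but rather that
\[
f(x) = \widehat f(\emptyset) + \sum_{S\neq\emptyset}\widehat f(S)x^S = g(x) + h(x),
\]
and the point is to produce a second identity of the form $(\text{something bounded by }\|f\|_\infty) = g(x) - h(x)$. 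Indeed $g(x) - h(x) = \widehat f(\emptyset)$, which is bounded by $\|f\|_\infty$ but gives nothing. So the real content must be an averaging identity: one shows that $g(x)$ and $h(x)$ are each an average over a sign-flip group of values of $f$, with signs chosen so that $g(x)+h(x)$ and $g(x)-h(x)$ correspond to two \emph{different} averages of $\pm f$ evaluated at cube points.

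Concretely, first I would introduce the full sign-flip averaging operator. For $T\subset[N]$ let $\sigma_T$ act on $x$ by flipping the coordinates in $T$; then $\mathbb{E}_T[f(\sigma_T x)\,(-1)^{\text{(suitable character)}}]$ isolates Fourier levels. The cleanest route: observe that $\tfrac12\big(f(x) + \mathbb{E}_{y}[f(y)]\big)$ won't do, but
\[
h(x) = \frac{1}{2}\big(f(x) - \widehat f(\emptyset)\big)
\]
\emph{is} exactly $\tfrac12(f(x) - \mathbb{E}f)$, and likewise $g(x) = \tfrac12 f(x) + \tfrac12 \widehat f(\emptyset) = \tfrac12(f(x)+\mathbb{E}f)$. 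Thus the inequality to prove is
\[
\Big|\tfrac12 f(x) + \tfrac12\,\mathbb{E}f\Big| + \Big|\tfrac12 f(x) - \tfrac12\,\mathbb{E}f\Big| \le \|f\|_\infty,
\]
i.e. $|a+b| + |a-b| \le 2\max(|a|,|b|) \cdot(\ldots)$ — precisely $|a+b|+|a-b| = 2\max(|a|,|b|)$ with $a = \tfrac12 f(x)$, $b = \tfrac12\mathbb{E}f$, so the left side equals $2\max\big(\tfrac12|f(x)|,\tfrac12|\mathbb{E}f|\big) = \max\big(|f(x)|,|\mathbb{E}f|\big) \le \|f\|_\infty$. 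The last inequality is immediate since $|\mathbb{E}f| \le \|f\|_\infty$ and $|f(x)| \le \|f\|_\infty$.

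So the key steps are: (i) recognize that $\sum_{S\neq\emptyset}\widehat f(S)x^S = f(x) - \widehat f(\emptyset)$, so that the two bracketed expressions are $\tfrac12(f(x)+\widehat f(\emptyset))$ and $\tfrac12(f(x)-\widehat f(\emptyset))$ respectively; (ii) apply the elementary identity $|u+v|+|u-v| = 2\max(|u|,|v|)$ with $u = \tfrac12 f(x)$ and $v = \tfrac12\widehat f(\emptyset)$; (iii) bound $\max(|f(x)|,|\widehat f(\emptyset)|)$ by $\|f\|_\infty$, using that $\widehat f(\emptyset) = \mathbb{E}[f]$ is an average of values of $f$. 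There is essentially no obstacle here — the only thing to get right is the bookkeeping of the factor $\tfrac12$ and the observation that the "nontrivial part" of the Fourier expansion is literally $f - \mathbb{E}f$; once that is seen, the inequality collapses to the two-line scalar fact $|u+v|+|u-v| = 2\max(|u|,|v|)$.
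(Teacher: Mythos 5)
Your proof is correct, and it is genuinely simpler than the one in the paper. The whole argument rests on the observation that the Fourier--Walsh expansion is an exact pointwise identity on the cube, so $\sum_{S\neq\emptyset}\widehat{f}(S)x^{S}=f(x)-\widehat{f}(\emptyset)$ for every $x\in\{\pm1\}^{N}$; hence the two bracketed quantities are $\tfrac12\big(f(x)+\widehat{f}(\emptyset)\big)$ and $\tfrac12\big(f(x)-\widehat{f}(\emptyset)\big)$, and the scalar identity $|u+v|+|u-v|=2\max(|u|,|v|)$ with $u=\tfrac12 f(x)$, $v=\tfrac12\widehat{f}(\emptyset)$ gives exactly $\max\big(|f(x)|,|\widehat{f}(\emptyset)|\big)\leq\|f\|_{\infty}$, the last step using $|\widehat{f}(\emptyset)|=|\mathbb{E}[f]|\leq\|f\|_{\infty}$. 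The paper instead proceeds by symmetrization: for each set $A\subset[N]$ of flipped coordinates it splits the spectrum into the parts with $|S\cap A|$ even and odd, obtains
\begin{equation*}
\Big|\sum_{|S\cap A|\ \mathrm{even}}\widehat{f}(S)x^{S}\Big|+\Big|\sum_{|S\cap A|\ \mathrm{odd}}\widehat{f}(S)x^{S}\Big|\leq\|f\|_{\infty},
\end{equation*}
and then averages over all $2^{N}$ sets $A$, using a counting argument ($2^{N-1}$ subsets $A$ meet a fixed nonempty $S$ in an odd number of points) to identify the averaged odd part with $\tfrac12\sum_{S\neq\emptyset}\widehat{f}(S)x^{S}$. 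What the paper's route buys is the family of stronger intermediate inequalities (one for each $A$), which is in the same spirit as Lemma~\ref{trick} and could conceivably be exploited to isolate other parts of the spectrum; what your route buys is a two-line proof showing that the stated theorem is really just the elementary fact that $f-\mathbb{E}[f]$ is the nonconstant part of the expansion, and it yields Corollary~\ref{Coro:CaratheodoryTypeIneq} just as readily after taking expectations. The only point worth making explicit in a write-up is the (standard) identity $|u+v|+|u-v|=2\max(|u|,|v|)$ for real numbers, so that no case distinction is needed.
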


\begin{proof}
Fix $x \in \{ \pm 1\}^{N}$. Given $A \subset [N]$, if we denote by $\tilde{x}$ the element defined as $\tilde{x}_{n} = x_{n}$
if $n \notin A$ and $\tilde{x}_{n} = -x_{n}$ if $n \in A$; then we have that
\begin{align*}
& | f(x) | = \Big| \sum_{|S\cap A| \mbox{ even }}{\widehat{f}(S) \, x^{S}} + \sum_{|S\cap A| \mbox{ odd }}{\widehat{f}(S) \, x^{S}} \Big|,\\
& |f(\tilde{x})| = \Big| \sum_{|S\cap A| \mbox{ even }}{\widehat{f}(S) \, x^{S}} - \sum_{|S\cap A| \mbox{ odd }}{\widehat{f}(S) \, x^{S}} \Big|.
\end{align*}
Both things together lead to
\[ \Big| \sum_{|S\cap A| \mbox{ even }}{\widehat{f}(S) \, x^{S}} \Big| + \Big| \sum_{|S\cap A| \mbox{ odd }}{\widehat{f}(S) \, x^{S}} \Big| \leq \| f\|_{\infty}. \]
If we sum over all $A \subset [N]$ and divide by $2^{N}$, then we will get by the triangular inequality, that
\begin{equation} \label{equa:sumlessthatone}
\Big|\frac{1}{2^N} \sum_{A \subset[N]}{\sum_{|S\cap A| \mbox{ even }}{\widehat{f}(S) \, x^{S}}} \Big| + \Big| \frac{1}{2^N} \sum_{A \subset[N]}{\sum_{|S\cap A| \mbox{ odd }}{\widehat{f}(S) \, x^{S}}} \Big| \leq \| f\|_{\infty}.
\end{equation}
We can rewrite the double sums in a more affordable sum
\[ \sum_{A \subset[N]}{\sum_{|S\cap A| \mbox{ odd }}{\widehat{f}(S) \, x^{S}}} = \sum_{S \neq \emptyset}{\widehat{f}(S) \, x^{S} |\{ A \subset [N]\colon |A \cap S| \, \mbox{ is odd} \}|}. \]
To count the number of subsets $A \subset [N]$ such that $|A \cap S|$ is odd, note that such set is of the form $A = A_{1} \cup A_{2}$ where $A_{1} \subset [N] \setminus S$ and $A_{2} \subset S$ satisfies that $|A_{2}|$ is odd. Since the number of subsets $A_{2} \subset S \neq \emptyset$ with an odd number of elements is precisely $2^{|S|-1}$, we deduce that
\[
|\{ A \subset [N]\colon |A \cap S| \, \mbox{ is odd} \}| = 2^{N - |S|} \cdot 2^{|S|-1} = 2^{N-1}.
\]
Therefore
\begin{equation}
\label{equa:oddPart}
\frac{1}{2^N}\sum_{A \subset[N]}{\sum_{|S\cap A| \mbox{ odd }}{\widehat{f}(S) \, x^{S}}} = \frac{1}{2} \sum_{S \neq \emptyset}{\widehat{f}(S) \, x^{S}}.
\end{equation}
Following the same strategy, we have that
\begin{equation}
\label{equa:evenPart}
\frac{1}{2^N} \sum_{A \subset[N]}{\sum_{|S\cap A| \mbox{ even }}{\widehat{f}(S) \: x^{S}}} = \widehat{f}(\emptyset) + \frac{1}{2} \sum_{S \neq \emptyset}{\widehat{f}(S) \, x^{S}}\,.
\end{equation}
Replacing \eqref{equa:oddPart} and \eqref{equa:evenPart} in \eqref{equa:sumlessthatone}, we get the result.
\end{proof}

If we take expectations in \eqref{equa:MAINsumlessthanone} and bound the first integral by $|\widehat{f}(\emptyset)|$, then we immediately obtain
the following result.

\begin{Coro}
\label{Coro:CaratheodoryTypeIneq}
For every function $f:\{ \pm 1\}^{N} \rightarrow [-1,1]$ we have that
\begin{equation}\label{equa:CaratheodoryTypeIneq}
\mathbb{E}{\Big| \sum_{S \neq \emptyset}{\widehat{f}(S) \: x^{S}} \Big|} \leq 2 (1 - |\widehat{f}(\emptyset)|).
\end{equation}
\end{Coro}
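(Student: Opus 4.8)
The plan is to integrate the pointwise inequality of Theorem~\ref{Theo:MAINsumlessthanone} over the Boolean cube and then bound the first of the two resulting averages from below by $|\widehat{f}(\emptyset)|$. Since $f$ takes values in $[-1,1]$ we have $\|f\|_\infty \le 1$, so \eqref{equa:MAINsumlessthanone} reads, for every $x \in \{\pm 1\}^N$,
\[
\Big| \widehat{f}(\emptyset) + \tfrac{1}{2} \sum_{S \neq \emptyset}{\widehat{f}(S) \, x^{S}} \Big| + \tfrac{1}{2}\Big| \sum_{S \neq \emptyset}{\widehat{f}(S) \, x^{S}} \Big| \leq 1.
\]
Applying the expectation $\mathbb{E}$ with respect to the normalized counting measure on $\{\pm 1\}^N$ preserves this inequality, which yields
\[
\mathbb{E}\Big| \widehat{f}(\emptyset) + \tfrac{1}{2} \sum_{S \neq \emptyset}{\widehat{f}(S) \, x^{S}} \Big| + \tfrac{1}{2}\, \mathbb{E}\Big| \sum_{S \neq \emptyset}{\widehat{f}(S) \, x^{S}} \Big| \leq 1.
\]

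Next I would estimate the first expectation from below. Setting $g(x) = \widehat{f}(\emptyset) + \tfrac{1}{2}\sum_{S\neq\emptyset}\widehat{f}(S)\,x^{S}$ and using that $\mathbb{E}[\chi_S] = \mathbb{E}[x^S] = 0$ for every nonempty $S \subset [N]$, we get $\mathbb{E}[g] = \widehat{f}(\emptyset)$, and hence by Jensen's inequality (equivalently, the triangle inequality for integrals, $|\mathbb{E}[g]| \le \mathbb{E}|g|$) we obtain $\mathbb{E}|g(x)| \ge |\widehat{f}(\emptyset)|$. Substituting this lower bound into the last display and rearranging gives precisely
\[
\mathbb{E}\Big| \sum_{S \neq \emptyset}{\widehat{f}(S) \, x^{S}} \Big| \leq 2\big(1 - |\widehat{f}(\emptyset)|\big),
\]
which is \eqref{equa:CaratheodoryTypeIneq}.

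There is no real obstacle here: the corollary is an immediate consequence of Theorem~\ref{Theo:MAINsumlessthanone}, and the only step requiring a moment's care is the passage from the pointwise estimate to the averaged one together with the use of $\mathbb{E}|g| \ge |\mathbb{E}[g]| = |\widehat{f}(\emptyset)|$; the rest is bookkeeping. It is worth noting that the constant $2$ on the right-hand side parallels the constant appearing in the classical F.~Wiener/Carath\'eodory-type bound \eqref{Fritz} for holomorphic functions on the disc, which motivates the name of the corollary.
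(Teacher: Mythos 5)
Your proof is correct and is exactly the paper's argument: take expectations in Theorem~\ref{Theo:MAINsumlessthanone} (with $\|f\|_\infty \le 1$), bound the first expectation from below by $|\widehat{f}(\emptyset)|$ via $\mathbb{E}|g| \ge |\mathbb{E}[g]|$ and the fact that nonempty Walsh characters have mean zero, and rearrange. Nothing to add.
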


The previous corollary cannot be improved in the sense that we cannot find $p > 1$ and $\gamma > 0$ such that
\begin{equation} \label{equa:pIntegralCaratheodory}
\Big(\mathbb{E}{\Big| \sum_{S \neq \emptyset}{\widehat{f}(S) \, x_{S}} \Big|^{p} }\Big)^{\frac{1}{p}} \leq \gamma (1 - |\widehat{f}(\emptyset)|)
\end{equation}
for every function $f\colon \{\pm 1\}^{N} \rightarrow \R$. Indeed, take $f:\{\pm 1\}^{N} \rightarrow \{ \pm 1\}$ with $\sigma(f=1) = \lambda$ and $\sigma(f=-1) = 1 - \lambda$ for some $0 < \lambda < 1/2$. Then, $\widehat{f}(\emptyset) = 2 \lambda -1$ and
\[
\theta(x):=f(x) - \widehat{f}(\emptyset) = \begin{cases} 2 - 2\lambda, & f(x)=1,\\ - 2\lambda, & f(x)=-1. \end{cases}
\]
Therefore,
\begin{align*}
\mathbb{E}{|\theta(x)|^{p}} & = (2 -2 \lambda)^{p} \lambda + (2\lambda)^{p} (1 - \lambda).
\end{align*}
and replacing this value in \eqref{equa:pIntegralCaratheodory} we get that
\begin{equation}\label{equa:pIntegralCaratheodoryAux}
(2 -2 \lambda)^{p} \lambda + (2\lambda)^{p} (1 - \lambda) \leq \gamma^{p} (2 \lambda)^{p},
\end{equation}
which leads to
\[
(1 - \lambda)^{p} \leq (1 - \lambda)^{p} + \lambda^{p-1} (1 - \lambda) \leq \gamma^{p} \lambda^{p-1}.
\]
But for $N$  large enough we could take $\lambda$ small enough to contradict
the previous inequality for the given $\gamma$.

\begin{Rema}
It also follows from \eqref{equa:pIntegralCaratheodoryAux} for the case $p=1$ that the constant $2$ in
\eqref{equa:CaratheodoryTypeIneq} is optimal.
\end{Rema}

\begin{Coro}
\label{Coro:CaratheodoryTypeIneq2}
Let $f\colon \{ \pm 1\}^{N} \rightarrow [-1,1]$ be a function of degree $d$. Then
\begin{equation}\label{equa:CaratheodoryTypeIneq2}
\Big(\sum_{0 < |S| \leq d}{|\widehat{f}(S)|^{2}}\Big)^{\frac{1}{2}} \leq 2 e^{d} (1 - |\widehat{f}(\emptyset)|).
\end{equation}
\end{Coro}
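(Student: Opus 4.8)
The plan is to read off \eqref{equa:CaratheodoryTypeIneq2} from Corollary~\ref{Coro:CaratheodoryTypeIneq} together with a hypercontractive comparison between the $L_1$- and $L_2$-norms of a low-degree function. Put $g = f - \widehat{f}(\emptyset) = \sum_{S \neq \emptyset} \widehat{f}(S)\, x^{S}$. Since $f$ has degree $d$ we have $\deg g \leq d$, and by Parseval the left-hand side of \eqref{equa:CaratheodoryTypeIneq2} is exactly $\|g\|_2$. On the other hand Corollary~\ref{Coro:CaratheodoryTypeIneq} states precisely that $\|g\|_1 = \mathbb{E}\,|g| \leq 2\big(1 - |\widehat{f}(\emptyset)|\big)$. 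Hence it is enough to establish the norm comparison $\|g\|_2 \leq e^{d}\|g\|_1$ for every $g\colon \{\pm 1\}^{N}\to \mathbb{R}$ with $\deg g \leq d$, after which
\[
\Big(\sum_{0 < |S| \leq d}|\widehat{f}(S)|^{2}\Big)^{1/2} = \|g\|_2 \leq e^{d}\|g\|_1 \leq 2e^{d}\big(1 - |\widehat{f}(\emptyset)|\big).
\]

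For the norm comparison I would fix an exponent $q > 2$ and interpolate $L_2$ between $L_1$ and $L_q$ (log-convexity of the $L_p$-norms, i.e.\ Hölder), obtaining
\[
\|g\|_2 \leq \|g\|_1^{\frac{q-2}{2(q-1)}}\,\|g\|_q^{\frac{q}{2(q-1)}}.
\]
Then I invoke the Bonami--Beckner $(2,q)$-hypercontractive inequality on the Boolean cube, which for $g$ of degree at most $d$ reads $\|g\|_q \leq (q-1)^{d/2}\|g\|_2$. Substituting this into the previous display and solving for $\|g\|_2$ (one may assume $\|g\|_2 \neq 0$, the case $g=0$ being trivial) gives, for every $q > 2$,
\[
\|g\|_2 \leq (q-1)^{\frac{dq}{2(q-2)}}\,\|g\|_1.
\]
Since $(q-1)^{\frac{q}{2(q-2)}} \to e$ as $q \to 2^{+}$ — a one-line expansion using $\log(1+t)=t+O(t^{2})$ with $q = 2+t$ — and in fact $(q-1)^{q/(2(q-2))}\ge e$ for all $q>2$ (the inequality $(t+2)\log(1+t)\ge 2t$ follows from convexity), the infimum over $q>2$ of the right-hand side equals $e^{d}\|g\|_1$, so $\|g\|_2 \leq e^{d}\|g\|_1$ as required.

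The interpolation step and the substitution are routine; the single point deserving attention is the optimization over $q$. A blunt choice such as $q = 4$ with $\|g\|_4 \leq 3^{d/2}\|g\|_2$ only yields the weaker constant $3^{d}$, so obtaining the stated $e^{d}$ genuinely forces one to let $q$ tend to $2$ and extract the limit $(q-1)^{q/(2(q-2))}\to e$. The only external ingredient is the hypercontractive inequality, which is the same tool the paper announces for the lower bound on $\rho(\mathcal{B}^{N}_{\delta})$, so no new machinery is needed.
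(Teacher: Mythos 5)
Your proposal is correct and follows essentially the same route as the paper: both decompose $g = f - \widehat{f}(\emptyset)$, use Corollary~\ref{Coro:CaratheodoryTypeIneq} to get $\|g\|_1 \le 2(1-|\widehat{f}(\emptyset)|)$, and then apply the comparison $\|g\|_2 \le e^{d}\|g\|_1$ for degree-$d$ functions, which the paper simply cites as \cite[Theorem 9.22]{BooleanRyan} while you rederive it from the Bonami--Gross inequality via Hölder interpolation and the limit $(q-1)^{q/(2(q-2))} \to e$ as $q \to 2^{+}$. Your derivation of that comparison (exponents $\theta = \tfrac{q-2}{2(q-1)}$, the bound $\|g\|_q \le (q-1)^{d/2}\|g\|_2$, and the passage to the limit) checks out, so the argument is complete.
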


\begin{proof}
Using \cite[Theorem 9.22]{BooleanRyan}
\label{Coro:comparingNorms} we have that $f:\{ \pm 1\}^{N} \rightarrow \mathbb{R}$ is a function of degree $d$, then $\| f\|_{2} \leq e^{d} \| f\|_{1}$
(see again \eqref{p-norm} for the definition of the norms). Combining this fact with Corollary \ref{Coro:CaratheodoryTypeIneq} we get the desired conclusion.
\end{proof}

\begin{proof}[Proof of Theorem \ref{Theo:BooleanRadiusDegree}]
The upper bound is consequence of the trivial inequality $\rho(\mathcal{B}^{N}_{\leq d}) \leq \rho(\mathcal{B}^{N}_{d})$ and Theorem \ref{Theo:BooleanRadiusmHomogeneous}. To prove the lower bound, let us fix a (normalized) function $f:\{ \pm 1\}^{N} \rightarrow [-1,1]$ of degree $d$ with $\| f\|_{\infty}= 1$. Put $\rho = C/\sqrt{N}$ where $0<C<1$ is a constant. Using H\"{o}lder's inequality together with Corollary \ref{Coro:CaratheodoryTypeIneq2} we obtain that
\begin{align*}
\sum_{S \subset [N]}{|\widehat{f}(S)| \rho^{|S|}} & \leq |\widehat{f}(\emptyset)| + \Big( \sum_{0<|S| \leq d}{|\widehat{f}(S)|^{2}} \Big)^{\frac{1}{2}} \bigg(\sum_{m=1}^{d}{\rho^{2m} \binom{N}{m}} \bigg)^{\frac{1}{2}}\\
& \leq |\widehat{f}(\emptyset)| + 2e^{d}(1 - |\widehat{f}(\emptyset)|) \Big( \sum_{m=1}^{d}{(N \rho^{2})^{m}} \Big)^{\frac{1}{2}}\\
& \leq |\widehat{f}(\emptyset)| + 2 e^{d} (1 - |\widehat{f}(\emptyset)|) \frac{C}{\sqrt{1 - C^{2}}}\,.
\end{align*}
It then follows that we can find a constant $C=c_{d}$ such that the previous sum is less than or equal to one. This yields that
\[
\rho(\mathcal{B}^{N}_{\leq d}) \geq \frac{c_{d}}{\sqrt{N}}.\qedhere
\]
\end{proof}

The gap between the exponents of $N$ in the upper and lower estimations of \eqref{equa:BooleanRadiusDegree} is notorious. We strongly
believe that $1/N^{\frac{d-1}{2d}}$ is the right order of decay in view of the result for the $d$-homogeneous case. In this sense,
+we pose the following related question.

\begin{Ques}\label{Ques:WienerTypeFull}
Given $d \in\N$, is there a constant $C_{d} > 0$ such that every function $f:\{ \pm 1\}^{N} \rightarrow [-1,1]$ of degree $d$
\begin{equation}\label{equa:CaratheodoryTypeDegree}
\Big\| \sum_{S \neq \emptyset}{\widehat{f}(S) x^{S}} \Big\|_{\infty} \leq C_{d} (1 - |\widehat{f}(\emptyset)|)\,?
\end{equation}
\end{Ques}

A positive answer to the previous question would imply that we can replace $N^{\frac{1}{2}}$ by $N^{\frac{d-1}{2d}}$ in the lower bound of \eqref{equa:BooleanRadiusDegree}. Indeed, we could refine the second part of the proof of Theorem \ref{Theo:BooleanRadiusDegree} using the inequality
\[ \sum_{0<|S|\leq d}{|\widehat{f}(S)|\rho^{|S|}} \leq \Big(\sum_{0 < |S| \leq d}{|\widehat{f}(S)|^{\frac{2d}{d+1}}}\Big)^{\frac{d+1}{2d}}
\bigg(\sum_{m=1}^{d}{\rho^{m \frac{2d}{d-1}}} \binom{N}{m}\bigg)^{\frac{d-1}{2d}}\,,
\]
and then applying \eqref{Theo:PolyBH} together with \eqref{equa:CaratheodoryTypeDegree}.

Let us note that Question \ref{Ques:WienerTypeFull} is equivalent to the following:

\begin{Ques}\label{Ques:WienerTypeHomogeneousl}
Given $d \in\N$, is there a constant $C_{d} > 0$ such that every function $f:\{ \pm 1\}^{N} \rightarrow [-1,1]$ of degree $d$ and each $1 \leq m \leq d$
we have that
\begin{equation*}\label{equa:CaratheodoryTypeDegree}
\Big\| \sum_{|S| = m}{\widehat{f}(S) x^{S}} \Big\|_{\infty} \leq C_{d} (1 - |\widehat{f}(\emptyset)|)\,?
\end{equation*}
\end{Ques}
The previous condition is very similar to the statement of Wiener's theorem for complex polynomials in several variables $Q(z) = \sum_{m=0}^{d}{Q_{m}(z)}$, where $Q_{m}$ denotes the $m$-homogeneous part, namely that for each $0 \leq m \leq d$  $$\| Q_{m} \|_{\infty} \leq 1 - |Q_{0}|^{2} \leq 2 (1 - |Q_{0}|);$$
see \eqref{Fritz} for the one dimensional case. However, in the Boolean case we cannot expect that the constants $C_{d}$ in Question \ref{Ques:WienerTypeHomogeneousl} satisfy $C_{d} \leq C$ for an absolute constant $C$. Indeed,  otherwise
\begin{align*}
\sum_{0 < |S| \leq N}{|\widehat{f}(S)| \rho^{|S|}} & \leq \sum_{m=1}^{N}{\big( \sum_{|S|=m}{|\widehat{f}(S)|^{\frac{2m}{m+1}}} \big)^{\frac{m+1}{2m}} \rho^{m} \binom{N}{m}^{\frac{m-1}{2m}}}\\
&  \leq C (1 - |\widehat{f}(\emptyset)|) \sum_{m=1}^{N}{\Big( \frac{ \rho \, \sqrt{N} \sqrt{e}}{\sqrt{m}} \Big)^{m}}\,,
\end{align*}
and then $\rho(\mathcal{B}^{N}) \geq \Omega(1/\sqrt{N})$. But by  Theorem \ref{Theo:BooleanRadius} we have $\rho(\mathcal{B}^{N}) \leq O(1/N)$, a~contradiction.

\section{Case 4: The class $\mathcal{B}^{N}_{\delta}$}
\label{sec:biased}

\noindent The main result of this final section reads as follows.

\begin{Theo}\label{Theo:BooleanRadiusUnbiased}
There is an absolute constant $C > 0$ such that for each $N \in \N$ and every $\frac{1}{2^{N}} \leq \delta \leq 1$ we have that
\begin{equation}\label{equa:BooleanRadiusUnbiased}
\frac{C^{-1}}{\sqrt{N} \sqrt{\log{(2/\delta)}}} \leq \rho(\mathcal{B}^{N}_{\delta}) \leq \frac{C}{\sqrt{N} \sqrt{\log{(2/\delta)}}}.
\end{equation}
\end{Theo}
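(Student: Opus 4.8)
The plan is to establish the two bounds separately, with the lower bound coming from hypercontractivity and the upper bound from a careful analysis of the threshold functions $\psi_{\alpha,N}$. For the \emph{lower bound}, fix $f\colon\{\pm1\}^N\to\R$ with $\|f\|_\infty\le 1$ and $|\mathbb{E}[f]|=|\widehat f(\emptyset)|\le(1-\delta)$. Write $g=f-\widehat f(\emptyset)$, so $\|g\|_\infty\le 1+|\widehat f(\emptyset)|\le 2$ and $\|g\|_2\le\|g\|_\infty$; more importantly $\|g\|_2^2=\sum_{S\neq\emptyset}\widehat f(S)^2$. I would split the Fourier mass into dyadic degree-blocks $A_j=\{S:2^{j-1}\le|S|<2^j\}$ for $0\le j\le\log_2 N+1$, and on each block apply Cauchy--Schwarz followed by the hypercontractive bound $\|P_{[k,2k)}g\|_2\le e^{k}\,\|g\|_1$ or rather the $(2,q)$-hypercontractivity estimate $\sum_{|S|=m}\widehat f(S)^2\le (q-1)^{-m}\|g\|_q^2$ with $q$ chosen near $1$. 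Using $\rho\le c/(\sqrt N\sqrt{\log(2/\delta)})$ and $\binom Nm\le (eN/m)^m$, the block sums form a convergent geometric-type series whose total is controlled by $\|g\|_1$. The key quantitative input is that $\|g\|_1\lesssim 1-|\widehat f(\emptyset)|$ is \emph{not} true in general, so instead I would use $\sum_{S\neq\emptyset}|\widehat f(S)|\rho^{|S|}\le\|g\|_2\cdot(\sum_m\binom Nm\rho^{2m})^{1/2}$ only after first showing, via hypercontractivity, that the effective degree is $O(\log(2/\delta))$: concentration of $f$ (a $\pm$-ish function that is biased by $\delta$) forces its Fourier weight above level $K\log(2/\delta)$ to be negligible, so the relevant sum $\sum_{m=1}^{O(\log(2/\delta))}(N\rho^2)^m$ is bounded once $N\rho^2\lesssim 1/\log(2/\delta)$.

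For the \emph{upper bound}, the strategy is to exhibit an explicit family in $\mathcal{B}^N_\delta$ with small Boolean radius, namely the threshold functions $\psi_{\alpha,N}(x)=\sign(x_1+\cdots+x_N-\alpha)$. First I would check that for a suitable choice of $\alpha=\alpha(N,\delta)$ one has $|\mathbb{E}[\psi_{\alpha,N}]|\le 1-\delta$; since $\mathbb{E}[\psi_{\alpha,N}]=1-2\,\Pr[\sum x_i\le\alpha]$ and the partial-sum distribution is (asymptotically) Gaussian, choosing $\alpha\asymp\sqrt N\sqrt{\log(2/\delta)}$ makes $\Pr[\sum x_i\le\alpha]\asymp\delta$, placing $\psi_{\alpha,N}$ in $\mathcal{B}^N_\delta$. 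Then I invoke Theorem \ref{Theo:BooleanRadiusSignFunction}, which gives $\rho(\psi_{\alpha,N})\le C/(\alpha+\sqrt N)\asymp C/(\sqrt N\sqrt{\log(2/\delta)})$ for this range of $\alpha$ (using $\sqrt{\log(2/\delta)}\ge1$, so $\alpha+\sqrt N\asymp\sqrt N\sqrt{\log(2/\delta)}$). Since $\rho(\mathcal{B}^N_\delta)=\inf\{\rho(f):f\in\mathcal{B}^N_\delta\}\le\rho(\psi_{\alpha,N})$, the upper bound follows. This reduces the whole upper bound to Theorem \ref{Theo:BooleanRadiusSignFunction}, whose proof I would organize around estimating $\|\psi_{\alpha,N}\|_\infty=1$ from below against $\sum_S|\widehat{\psi_{\alpha,N}}(S)|\rho^{|S|}$ using the known combinatorial formula for the level-$k$ Fourier weight of a threshold function (essentially a ballot-type count) together with a lower bound on $\sum_{|S|=1}|\widehat{\psi}(S)|\asymp\sqrt N\,\varphi(\alpha/\sqrt N)$ from the degree-one part.

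I expect the \textbf{main obstacle} to be the lower bound: the class $\mathcal{B}^N_\delta$ contains wild functions (not just thresholds), and the only structural information is the single scalar constraint $|\widehat f(\emptyset)|\le 1-\delta$. Turning this into control of the \emph{entire} tail $\sum_{S\neq\emptyset}|\widehat f(S)|\rho^{|S|}$ requires showing that a function bounded by $1$ in sup-norm with mean bounded away from $\pm1$ by $\delta$ cannot have too much high-degree Fourier mass relative to $1-|\widehat f(\emptyset)|$ — this is precisely a quantitative hypercontractive statement, and getting the $\sqrt{\log(2/\delta)}$ (rather than, say, $\log(2/\delta)$) in the denominator demands choosing the hypercontractive exponent $q=q(\delta)$ optimally, typically $q-1\asymp 1/\log(2/\delta)$, and carefully summing the resulting series $\sum_m (N\rho^2/(q-1))^{m}$ so that it converges when $N\rho^2\lesssim(q-1)\asymp1/\log(2/\delta)$. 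Matching the threshold-function upper bound to this choice is what pins down the exponent $1/2$ on $\log(2/\delta)$.
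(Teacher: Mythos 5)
Your upper bound follows the paper's route exactly: pick a threshold function $\psi_{\alpha,N}$ with $\alpha\asymp\sqrt{N}\sqrt{\log(2/\delta)}$, check it lies in $\mathcal{B}^{N}_{\delta}$, and invoke the bound $\rho(\psi_{\alpha,N})\le C/(\alpha+\sqrt N)$ of Theorem \ref{Theo:BooleanRadiusSignFunction}. The only caveat is that ``the partial-sum distribution is asymptotically Gaussian'' is not enough here: the theorem is claimed for all $\delta$ down to $2^{-N}$, where $\alpha$ is of order $N$ and one is in the large-deviation rather than CLT regime, so you need a tail lower bound valid uniformly in $N$ and $\alpha$ (this is the role of Lemma \ref{Lemm:EstimationSignAux} in the paper, together with the trivial bound $2^{-N}$ when $\alpha\ge N/2$ and the separate treatment of $\delta\ge 2/e^{6}$). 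That is a fixable detail.

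The lower bound, however, contains a genuine gap, and it sits precisely at the step you flag. Your assertion that $\|g\|_{1}\lesssim 1-|\widehat f(\emptyset)|$ for $g=f-\widehat f(\emptyset)$ is ``not true in general'' is wrong: if $\|f\|_\infty\le 1$ and (say) $\mathbb{E}[f]\ge 0$, then $\mathbb{E}|f-\mathbb{E}f|\le \mathbb{E}(1-f)+(1-\mathbb{E}f)=2\,(1-|\widehat f(\emptyset)|)$; this is exactly Corollary \ref{Coro:CaratheodoryTypeIneq}, and it is the indispensable input. With it, taking $q=1+\varepsilon$ and $\varepsilon\asymp 1/\log(2/\delta)$, one gets $\|g\|_{q}^{q}\le\|g\|_\infty^{q-1}\|g\|_{1}\lesssim\delta$ (the paper does the same thing more cleanly via $g=(1-f)/2\in[0,1]$, so that $\|g\|_q^q\le\mathbb{E}[g]=\delta/2$), and then your own hypercontractive estimate $\sum_{|S|=m}\widehat f(S)^2\le(q-1)^{-m}\|g\|_q^2$ gives the level-$m$ bound of order $\varepsilon^{-m}\delta^{2}$, after which Cauchy--Schwarz level by level and the series $\sum_m (N\rho^2 e/(\varepsilon m))^{m/2}$ close the argument at $\rho\asymp 1/(\sqrt N\sqrt{\log(2/\delta)})$. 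Your proposed substitute does not: the global Cauchy--Schwarz step $\sum_{S\ne\emptyset}|\widehat f(S)|\rho^{|S|}\le\|g\|_2\big(\sum_m\binom Nm\rho^{2m}\big)^{1/2}$ can only use $\|g\|_2\le(1-\widehat f(\emptyset)^2)^{1/2}\asymp\sqrt\delta$, so it yields at best $\sqrt\delta\cdot\rho\sqrt N$, and forcing this below $1-|\widehat f(\emptyset)|\asymp\delta$ gives only $\rho\gtrsim\sqrt\delta/\sqrt N$ --- far weaker than the claimed bound for small $\delta$. Nor does the ``effective degree $O(\log(2/\delta))$'' truncation repair this: functions in $\mathcal{B}^N_\delta$ can carry all of their nonconstant Fourier weight at arbitrarily high levels (e.g.\ $f=(1-\delta)+\delta\,x_1\cdots x_N$), and in any case the difficulty is not the high levels but obtaining the factor $\delta$ to the first power at every level, which is exactly what the $L_1$/mean control you discarded provides. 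So the architecture (hypercontractivity with $q-1\asymp1/\log(2/\delta)$) is the paper's, but as written the key quantitative step is missing and the fallback you propose would fail.
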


The proof of the theorem is separated into two parts. First we deal with the left-hand side inequality of \eqref{equa:BooleanRadiusUnbiased}, which is based on the use of the hypercontractivity  inequalities of Bonami-Gross.
Recall that for  $-1 < \rho < 1$  the noise operator $T_\rho$  assigns to every function $f:\{\pm 1\}^N \rightarrow \mathbb{R}$ on the Boolean cube the map
\[ T_{\rho}f:\{ \pm 1\}^{N} \longrightarrow \mathbb{R}, \quad\, T_{\rho}f(x) := \sum_{S \subset [N]}{\widehat{f}(S) \rho^{|S|} x^{S}}. \]
An important feature of this operator is presented in the next celebrated result.
\begin{Theo}[Bonami-Gross]
For every $1 \leq p \leq q \leq \infty$
\begin{equation}\label{Theo:hypercontractive}
 \| T_{\rho}f\|_{q} \leq \| f\|_{p} \quad\, \mbox{whenever $\rho \leq \sqrt{\frac{p-1}{q-1}}$}.
\end{equation}
\end{Theo}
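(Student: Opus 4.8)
This is the classical hypercontractive inequality of Bonami and Gross, so the plan is to follow the standard \emph{two‑point inequality plus tensorization} route, keeping in mind the alternative of deducing it from the logarithmic Sobolev inequality in the manner of Gross. The first step is to reduce to the one‑variable case. Writing the noise operator as a tensor power $T_\rho=T_\rho^{(1)}\otimes\cdots\otimes T_\rho^{(N)}$ of the single‑variable noise operators, I would peel off the last coordinate: for fixed $x'=(x_1,\dots,x_{N-1})$ apply the (assumed) one‑variable inequality in the $x_N$‑variable, then interchange the outer $L^q_{x'}$‑norm with the inner $L^p_{x_N}$‑norm by Minkowski's integral inequality — this is exactly where the hypothesis $q\ge p$ enters — and close the induction with the one‑variable statement in the remaining $N-1$ coordinates. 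Hence it suffices to prove $\|T_\rho f\|_q\le\|f\|_p$ for $f\colon\{\pm1\}\to\mathbb{R}$, $f(x)=a+bx$.

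For such an $f$ one has $T_\rho f(x)=a+\rho b x$, so the whole content collapses to the scalar two‑point inequality
\[
\Big(\tfrac12|a+\rho b|^{q}+\tfrac12|a-\rho b|^{q}\Big)^{1/q}\ \le\ \Big(\tfrac12|a+b|^{p}+\tfrac12|a-b|^{p}\Big)^{1/p}\qquad\Big(0\le\rho\le\sqrt{\tfrac{p-1}{q-1}}\Big).
\]
By symmetry I may take $a,b\ge0$; the case $a=0$ reads $\rho b\le b$ and holds because $\sqrt{(p-1)/(q-1)}\le1$ (as $p\le q$), and for $a>0$ I normalize $a=1$. The remaining one‑variable inequality I would establish by a power‑series comparison: raising both sides to suitable integer multiples of $1/p$ and $1/q$ and expanding in powers of $b$, matters reduce to a term‑by‑term domination of explicit binomial‑type coefficients, and it is precisely the second‑order ($b^2$) coefficient that forces the sharp threshold $\rho^2=(p-1)/(q-1)$; equivalently, one may compare the logarithmic derivatives in $b$. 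It is also harmless to first reduce, via the semigroup law $T_{\rho_1}T_{\rho_2}=T_{\rho_1\rho_2}$ together with the self‑adjointness (hence duality) of $T_\rho$ and the trivial $L^r\to L^r$ contractivity for $\rho\le1$, to the cleaner special case $q=2$ (and dually $p=2$), and then recover the general exponent pair.

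The main obstacle is exactly Step 2: once that scalar inequality is available everything else is routine bookkeeping, but the inequality itself, though elementary, is delicate since the optimal constant leaves no slack, so a careless expansion fails. The cleanest alternative I would keep in reserve is Gross's route: prove the logarithmic Sobolev inequality on the two‑point space with the optimal constant, tensorize it (logarithmic Sobolev inequalities add under products), and then differentiate $t\mapsto\|T_{e^{-t}}f\|_{q(t)}$ along a suitably chosen exponent path $q(t)$, using the logarithmic Sobolev inequality to show this quantity is non‑increasing; evaluating at the endpoints yields the bound with the sharp relation between $\rho$ and $(p,q)$.
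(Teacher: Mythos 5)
The paper does not prove this statement at all: it is quoted as the classical Bonami--Gross hypercontractivity theorem (the natural reference being O'Donnell's book cited as \cite{BooleanRyan}), and the authors only \emph{use} it, in the proof of Theorem \ref{Theo:biasedBooleanRadius}. So there is no in-paper argument to compare with; your proposal has to be judged against the standard literature proof, which is exactly the route you describe: reduce to one coordinate by tensorization (induction on $N$, using Minkowski's integral inequality, which is where $p\le q$ enters), reduce via the semigroup law $T_{\rho_1}T_{\rho_2}=T_{\rho_1\rho_2}$, self-adjointness of $T_\rho$ and duality to the cases $q=2$ or $p=2$, and then prove the sharp two-point inequality for $f(x)=a+bx$ on $\{\pm 1\}$; the log-Sobolev route of Gross is the standard alternative. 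All of these reductions are correctly identified and correctly justified in your sketch.

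The weakness is that the one step carrying all of the analytic content --- the scalar two-point inequality
$\bigl(\tfrac12|a+\rho b|^{q}+\tfrac12|a-\rho b|^{q}\bigr)^{1/q}\le\bigl(\tfrac12|a+b|^{p}+\tfrac12|a-b|^{p}\bigr)^{1/p}$ at the sharp threshold $\rho=\sqrt{(p-1)/(q-1)}$ --- is only gestured at (``power-series comparison \dots delicate''), and you yourself flag it as the main obstacle. As a finished proof this is therefore incomplete: the tensorization and duality steps are routine precisely because the two-point case does all the work, and a ``term-by-term domination'' must be set up with some care (for instance, in the $(p,2)$ case with $1\le p\le 2$ one checks that all even generalized binomial coefficients $\binom{p}{2k}$ are nonnegative, so $\tfrac12\bigl[(1+b)^p+(1-b)^p\bigr]\ge 1+\tfrac{p(p-1)}{2}b^2\ge\bigl(1+(p-1)b^2\bigr)^{p/2}$ by Bernoulli's inequality, after first reducing to $|b|\le a$), and the range $|b|>a$ needs a separate reduction. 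Since both escape routes you name (the expansion argument for $(p,2)$ plus duality, or the log-Sobolev inequality on the two-point space together with differentiation of $t\mapsto\|T_{e^{-t}}f\|_{q(t)}$) are standard and do close the gap, the proposal is a correct plan for the textbook proof rather than a wrong approach; it just stops short of executing the only nontrivial step.
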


On the other hand, to prove  the right-hand side inequality of \eqref{equa:BooleanRadiusUnbiased} we are giving precise estimations
of certain type of threshold functions belonging to $\mathcal{B}^{N}_{\delta}$.\\

\subsection*{Lower bound}

\begin{Theo}\label{Theo:biasedBooleanRadius}
Let $f\colon \{ \pm 1\}^{N} \longrightarrow \mathbb{R}$ and $0<\delta \leq 1$ with
$ |\mathbb{E}[f]| \leq (1- \delta) \|f\|_{\infty} $. Then, we have that
\[
\rho(f) \geq  \frac{1}{5 \, \sqrt{N} \,  \sqrt{\log{(2/\delta)}}}.
\]
\end{Theo}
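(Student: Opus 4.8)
The plan is to normalize to $\|f\|_\infty=1$, peel off the constant Fourier coefficient, and reduce everything to a single estimate on the non-constant part of $f$. Since
\[
\sum_{S\subset[N]}|\widehat f(S)|\,\rho^{|S|}=|\widehat f(\emptyset)|+\sum_{S\neq\emptyset}|\widehat f(S)|\,\rho^{|S|}
\]
and $|\widehat f(\emptyset)|=|\mathbb{E}[f]|\le 1-\delta$, on setting $\varepsilon:=1-|\widehat f(\emptyset)|\in[\delta,1]$ it is enough to show that
\[
\sum_{S\neq\emptyset}|\widehat f(S)|\,\rho^{|S|}\ \le\ \varepsilon\qquad\text{for}\qquad \rho=\frac{1}{5\sqrt N\,\sqrt{\log(2/\delta)}}.
\]
Write $g:=f-\widehat f(\emptyset)$, so $\widehat g(\emptyset)=0$ and $\widehat g(S)=\widehat f(S)$ for $S\neq\emptyset$. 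I will use two bounds on $g$: trivially $\|g\|_\infty\le\|f\|_\infty+|\widehat f(\emptyset)|\le 2$, and, by Corollary~\ref{Coro:CaratheodoryTypeIneq}, $\|g\|_1=\mathbb{E}\big|f-\mathbb{E}[f]\big|\le 2\big(1-|\widehat f(\emptyset)|\big)=2\varepsilon$.

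The heart of the argument is a weighted Cauchy--Schwarz estimate combined with the Bonami--Gross hypercontractivity inequality. Fixing a parameter $t\in(0,1]$ (to be chosen at the very end) and writing $\rho^{|S|}=t^{|S|}(\rho/t)^{|S|}$, the Cauchy--Schwarz inequality and Parseval's identity give
\begin{align*}
\sum_{S\neq\emptyset}|\widehat f(S)|\,\rho^{|S|}
&\le\Big(\sum_{S\neq\emptyset}|\widehat f(S)|^2 t^{2|S|}\Big)^{1/2}\Big(\sum_{S\neq\emptyset}(\rho/t)^{2|S|}\Big)^{1/2}\\
&=\|T_tg\|_2\,\big((1+(\rho/t)^2)^N-1\big)^{1/2},
\end{align*}
because $\|T_tg\|_2^2=\sum_{S\neq\emptyset}t^{2|S|}|\widehat f(S)|^2$ and $\sum_{S\neq\emptyset}(\rho/t)^{2|S|}=(1+(\rho/t)^2)^N-1$. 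Now \eqref{Theo:hypercontractive} with $q=2$ and $p=1+t^2\le 2$ yields $\|T_tg\|_2\le\|g\|_{1+t^2}$, and one line of H\"older's inequality, $\|g\|_p^p=\mathbb{E}\big[|g|\cdot|g|^{p-1}\big]\le\|g\|_1\|g\|_\infty^{p-1}$, together with the bounds from the first paragraph gives $\|g\|_{1+t^2}\le\|g\|_1^{1/(1+t^2)}\|g\|_\infty^{t^2/(1+t^2)}\le 2\,\varepsilon^{1/(1+t^2)}$. Combining,
\[
\sum_{S\neq\emptyset}|\widehat f(S)|\,\rho^{|S|}\ \le\ 2\,\varepsilon^{1/(1+t^2)}\,\big((1+(\rho/t)^2)^N-1\big)^{1/2}.
\]

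To finish I would ensure that $2\big((1+(\rho/t)^2)^N-1\big)^{1/2}\le\varepsilon^{t^2/(1+t^2)}$; since $\varepsilon\ge\delta$ and the exponent on $\varepsilon$ is positive, this follows once $2\big((1+(\rho/t)^2)^N-1\big)^{1/2}\le\delta^{t^2/(1+t^2)}$, which is equivalent to $(\rho/t)^2\le\big(1+\tfrac14\delta^{2t^2/(1+t^2)}\big)^{1/N}-1$, and by the elementary bound $(1+y)^{1/N}-1\ge\tfrac1N\log(1+y)$ it is enough that
\[
\rho^2\ \le\ \frac{t^2}{N}\,\log\!\Big(1+\tfrac14\,\delta^{\,2t^2/(1+t^2)}\Big).
\]
It then remains to choose $t$. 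For $\delta\ge e^{-1}$ I take $t=1$, so the right-hand side is $\tfrac1N\log(1+\delta/4)$ and the inequality for $\rho=\tfrac1{5\sqrt N\sqrt{\log(2/\delta)}}$ reduces to $\log(1+\delta/4)\log(2/\delta)\ge\tfrac1{25}$, which holds throughout $[e^{-1},1]$. For $\delta<e^{-1}$ I take $t^2=\big(2\log(1/\delta)-1\big)^{-1}\in(0,1)$, chosen precisely so that $\delta^{2t^2/(1+t^2)}=e^{-1}$; then the inequality becomes $2\log(1/\delta)-1\le 25\log(1+\tfrac1{4e})\log(2/\delta)$, which holds (with room to spare) for every $\delta<e^{-1}$.

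The step I expect to be the main obstacle is exactly the choice of $t$, which has to mediate a genuine tension: driving $p=1+t^2$ towards $1$ is what lets hypercontractivity cash in the small $L^1$-norm $\|g\|_1\le 2\varepsilon$ --- and is what produces the $\log(2/\delta)$ in the final bound --- whereas a small $t$ makes the factor $(1+(\rho/t)^2)^N$ blow up. Balancing the two not only pins down the order $\rho\asymp 1/\sqrt{N\log(2/\delta)}$ but, with the quantitative choice above, also the constant. The remaining ingredients --- Parseval, Cauchy--Schwarz, H\"older, and two one-variable inequalities --- are routine.
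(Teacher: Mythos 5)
Your argument is correct, and it is essentially the paper's proof of Theorem~\ref{Theo:biasedBooleanRadius}: both hinge on the Bonami--Gross inequality \eqref{Theo:hypercontractive} applied to the non-constant part of $f$, a Cauchy--Schwarz step against the weights $\rho^{|S|}$, and a choice of noise parameter of order $1/\sqrt{\log(2/\delta)}$ balancing the smallness of the low $L_p$-norm against the blow-up of the combinatorial factor. The only (cosmetic) differences are that the paper works with the $[0,1]$-valued shift $g=(1-f)/2$, for which $\|g\|_p\leq(\delta/2)^{1/p}$ is immediate, and performs Cauchy--Schwarz level by level, whereas you center $f$, get $\|g\|_{1+t^2}\leq 2\varepsilon^{1/(1+t^2)}$ via Corollary~\ref{Coro:CaratheodoryTypeIneq} plus interpolation, and do one global Cauchy--Schwarz with Parseval.
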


\begin{proof}
Without loss of generality we can assume that  $f\colon \{\pm 1\}^N \rightarrow \mathbb{R}$ satisfies $\| f\|_{\infty} = 1$ and $\mathbb{E}[f] = 1 - \delta$. We follow an strategy similar to the proof of the small-set expansion theorem and the level-$k$ inequalities (see \cite[p. 259]{BooleanRyan}). We first claim that for every $0<\varepsilon <1$ and $0 \leq m \leq N$ we have that
\begin{equation}\label{equa:biasedBooleanHomogeneousAux}
\Big( \sum_{|S| = m}{|\widehat{f}(S)|^2} \Big)^{\frac{1}{2}}  \leq 2 \, \varepsilon^{-\frac{m}{2}} \, \left( \frac{\delta}{2} \right)^{\frac{1}{1 + \varepsilon}}.
\end{equation}
To see this, let us define $g\colon \{\pm 1\}^N \rightarrow [0,1]$ by $g(x) = (1 - f(x))/2$. Notice that
\begin{align}\label{equa:Fouriercoeff}
 \widehat{g}(S) = - \frac{1}{2} \widehat{f}(S), \quad\, \emptyset \neq S \subset [N].
\end{align}
Moreover, for every $p \geq 1$ we have that
\begin{equation}\label{equa:p-normBound}
\Big( \mathbb{E}[ |g(x)|^p ] \Big)^{\frac{1}{p}} \leq \Big( \mathbb{E}[g(x)] \Big)^{\frac{1}{p}} = \left( \frac{\delta}{2} \right)^{\frac{1}{p}}.
\end{equation}
Combining \eqref{equa:Fouriercoeff}, the Hypercontractivity Theorem \ref{Theo:hypercontractive} with $p=1 + \varepsilon< 2$
and \eqref{equa:p-normBound}, we obtain that
\begin{align*}
\sum_{|S|=m}{|\widehat{f}(S)|^2} & = 4 \sum_{|S|=m}{|\widehat{g}(S)|^2} \leq \frac{4}{\varepsilon^m} \sum_{S \subset [N]}{|\widehat{g}(S)|^2 \varepsilon^{|S|}}\\
& \leq \frac{4}{\varepsilon^m} \left( \mathbb{E}{|g(x)|^p} \right)^{\frac{2}{p}} \leq \frac{4}{\varepsilon^m} \left( \frac{\delta}{2} \right)^{\frac{2}{1 + \varepsilon}}.
\end{align*}
This finishes the proof of the claim. Applying now \eqref{equa:biasedBooleanHomogeneousAux} we get that for any $0< \rho$ and $0<\varepsilon<1$ it holds that
\begin{align*}
\sum_{S \neq \emptyset}{|\widehat{f}(S)| \, \rho^{|S|}} & = \sum_{m=1}^{N}{ \rho^{m} \, \sum_{|S| = m}{|\widehat{f}(S)|}} \leq \sum_{m=1}^{N}{ \rho^{m} \, \sqrt{\binom{N}{m}} \Big(\sum_{|S|=m}{|\widehat{f}(S)|^2}\Big)^{\frac{1}{2}} }\\
& \leq \sum_{m=1}^{N}{\rho^{m} \, \sqrt{\binom{N}{m}} \, 2 \, \varepsilon^{-\frac{m}{2}} \, \Big(\frac{\delta}{2}\Big)^{\frac{1}{1+ \varepsilon}} }\\
& \leq \sum_{m=1}^{N}{ \rho^{m} \, e^{\frac{m}{2}} \, \frac{N^\frac{m}{2}}{m^{\frac{m}{2}}} \, 2 \, \varepsilon^{-\frac{m}{2}} \left( \frac{\delta}{2} \right)^{\frac{1}{1+\varepsilon}} }=  2 \, \left( \frac{\delta}{2} \right)^{\frac{1}{1+\varepsilon}} \, \sum_{m=1}^{N}{ \Big( \frac{\rho \sqrt{e} \sqrt{N}}{\sqrt{\varepsilon} \sqrt{m}} \Big)^{m}}.
\end{align*}
Taking
\[
\varepsilon = \frac{\log{2}}{\log{(2/\delta)}} \hspace{3mm} \, \mbox{and} \, \hspace{3mm} \rho = \frac{1}{5 \, \sqrt{N} \,  \sqrt{\log{(2/\delta)}}},
\]
we deduce that
\[
\left(\frac{\delta}{2}\right)^{\frac{1}{1+ \varepsilon}} = \frac{\delta}{2} \exp{\left( \frac{\varepsilon}{1 + \varepsilon} \log{\frac{2}{\delta}} \right)} = \frac{\delta}{2} \exp{\left( \frac{ \log{(2/\delta)} \, \log{2}}{\log{(2/\delta)} + \log{2}}  \right)} \leq \delta.
\]
Using now that $\widehat{f}(\emptyset) = \mathbb{E}[f] = 1 - \delta$, the previous estimations yield to
\[
\sum_{S \neq \emptyset}{|\widehat{f}(S)| \, \rho^{|S|}} \leq 2 \, \delta \, \sum_{m=1}^{N}{\Big( \frac{\sqrt{e}}{5 \sqrt{m} \sqrt{\log{2}}} \Big)^{m}} \leq \delta = 1 - |\widehat{f}(\emptyset)|.
\qedhere
\]
\end{proof}

\subsection*{Threshold type functions}

\noindent Finally, we are going to show that the estimation in Theorem \ref{Theo:biasedBooleanRadius} is somehow optimal. For that we give precise estimates of the  Boolean radius of the following two-parametric functions:
\[
\psi_{N, \alpha}: \{\pm 1\}^{N} \rightarrow \{\pm 1\}, \hspace{3mm} \psi_{N, \alpha}(x) = \sign{(x_{1} + \ldots + x_{N} - \alpha)}\,,
\]
where $\alpha \in \mathbb{R}$, $N \in \N$, and as usual $\sign(y) = 1$ for  $y \geq 0$ and $-1$ for  $y < 0$. Different values of $\alpha$ may generate the same function. We will restrict to the case in which $\alpha \geq 0$. For some results we will also work assuming that $N-\alpha$ is an odd natural number. This way, we guarantee that different values of $\alpha$ lead to different functions and we also avoid that $\sign(0)$ appears. Let us point out that the functions $\psi_{N, \alpha}$ belong to a wider class of functions known as linear threshold function (see \cite[Chapter 5]{BooleanRyan}). The aim is to show the following result.

\begin{Theo}\label{Theo:BooleanRadiusSignFunction}
For each $N \in \N$ and $0 \leq \alpha < N$ we have that
\begin{equation}
\label{equa:TheoBooleanRadiusSignFunctionMain}
C^{-1} \frac{1}{\alpha + \sqrt{N}} \leq \rho(\psi_{N, \alpha}) \leq C \frac{1}{\alpha + \sqrt{N}}\,,
\end{equation}
where $C$ is an absolute constant independent of $\alpha$ and $N$.
\end{Theo}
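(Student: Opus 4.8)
The plan is to derive both estimates from the single–function bound of Theorem~\ref{Theo:biasedBooleanRadius}, from the trivial inequality $\rho(\psi_{N,\alpha})\ge\rho(\mathcal B^{N})=2^{1/N}-1$ furnished by Theorem~\ref{Theo:BooleanRadius}, and from explicit control of just one Fourier coefficient of $\psi_{N,\alpha}$; everything else reduces to standard estimates on the tails of the sum $S_{N}:=x_{1}+\dots+x_{N}$ of $N$ independent signs. Writing $g$ for the indicator of $A:=\{x\colon S_{N}(x)\ge\alpha\}$ and $\mu:=\mathbb{E}[g]=\mathbb{P}(S_{N}\ge\alpha)$, the identity $\psi_{N,\alpha}=2g-1$ gives $\widehat{\psi_{N,\alpha}}(\emptyset)=2\mu-1$ and $\widehat{\psi_{N,\alpha}}(S)=2\widehat{g}(S)$ for $S\ne\emptyset$; thus for $\alpha>0$ we have $\mu\le\tfrac12$, and the Boolean radius $\rho(\psi_{N,\alpha})$ is the unique $\rho\ge0$ with $\sum_{S}|\widehat{g}(S)|\,\rho^{|S|}=2\mu$, while in general $\rho\mapsto\sum_{S}|\widehat{g}(S)|\rho^{|S|}$ is non-decreasing from the value $\mu$ at $\rho=0$ to $+\infty$ (as $g$ is non-constant). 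Consequently, for the upper bound on $\rho(\psi_{N,\alpha})$ it suffices to exhibit one value $\rho_{0}$ with $\sum_{S}|\widehat{g}(S)|\rho_{0}^{|S|}\ge2\mu$, and for the lower bound one estimate of the whole sum from above.

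For the lower bound I would apply Theorem~\ref{Theo:biasedBooleanRadius} with $\delta:=\min\{\tfrac12,2\mu\}$, which is admissible since $|\mathbb{E}[\psi_{N,\alpha}]|=|2\mu-1|\le1-\delta$ (for $\alpha=0$ one uses $|\mathbb{E}[\psi_{N,0}]|=\mathbb{P}(S_{N}=0)\le\tfrac12$), obtaining $\rho(\psi_{N,\alpha})\ge\bigl(5\sqrt N\,\sqrt{\log(2/\delta)}\,\bigr)^{-1}$. It then remains to show $\log(1/\mu)=O(1+\alpha^{2}/N)$ with absolute constants for $0\le\alpha\le\tfrac12N$, i.e.\ $\mathbb{P}(S_{N}\ge\alpha)\ge c\,e^{-C\alpha^{2}/N}$. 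I would prove this by bounding $\mathbb{P}(S_{N}\ge\alpha)$ below by the probability of the window $\{\alpha\le S_{N}\le\alpha+2\lceil\sqrt N\rceil\}$: it contains $\gtrsim\sqrt N$ consecutive atoms, each $\gtrsim\tfrac1{\sqrt N}\,2^{-N(1-H_{2}(\beta))}$ by Stirling, with $\beta=\tfrac12+O\!\bigl(\tfrac{\alpha+\sqrt N}{N}\bigr)$, so that the two factors $\sqrt N$ cancel, and finishing with the elementary inequality $1-H_{2}(\tfrac12+t)\le(2t)^{2}/\log2$. In the complementary range $\tfrac12N<\alpha<N$ one has $\alpha+\sqrt N\asymp N$, and the bound $\rho(\psi_{N,\alpha})\ge2^{1/N}-1\ge\tfrac{\log2}{N}$ already gives the claim.

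For the upper bound I would split at $\alpha\asymp\sqrt N$. If $\alpha\le C_{0}\sqrt N$ for a suitable absolute $C_{0}$, I use the level-one coefficient: by the symmetry of $\psi_{N,\alpha}$ one has $N\,\widehat{\psi_{N,\alpha}}(\{1\})=\mathbb{E}[\psi_{N,\alpha}S_{N}]=2\,\mathbb{E}[S_{N}g]$, and since enlarging the threshold only removes nonnegative terms (as $\alpha\ge0$) this is $\ge2C_{0}\sqrt N\,\mathbb{P}(S_{N}\ge C_{0}\sqrt N)\ge c\sqrt N$ with $c=c(C_{0})>0$ (central limit theorem; small $N$ is settled trivially since always $\rho(\psi_{N,\alpha})\le1$). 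Hence $\sum_{S}|\widehat{\psi_{N,\alpha}}(S)|\rho^{|S|}\ge N\,\widehat{\psi_{N,\alpha}}(\{1\})\,\rho>1$ already for $\rho=C/\sqrt N$ with $C$ large, so $\rho(\psi_{N,\alpha})<C/\sqrt N\le(C_{0}+1)C\,(\alpha+\sqrt N)^{-1}$. If $\alpha>C_{0}\sqrt N$, set $\rho_{0}:=C/(\alpha+\sqrt N)$ and note that $\sum_{S}|\widehat g(S)|\rho_{0}^{|S|}\ge\sum_{S}\widehat g(S)\rho_{0}^{|S|}=T_{\rho_{0}}g(\mathbf 1)=\mathbb{E}[g(\mathbf y)]=\mathbb{P}(y_{1}+\dots+y_{N}\ge\alpha)$, where $y_{1},\dots,y_{N}$ are independent signs of mean $\rho_{0}$ (using $\mathbb{E}[\mathbf y^{S}]=\rho_{0}^{|S|}$). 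Comparing with $\mu=\mathbb{P}(S_{N}\ge\alpha)$ term by term — the ratio of the two binomial weights at index $j$ equals $(1+\rho_{0})^{j}(1-\rho_{0})^{N-j}$, which is non-decreasing in $j$ — yields $\mathbb{P}(y_{1}+\dots+y_{N}\ge\alpha)\ge(1+\rho_{0})^{k}(1-\rho_{0})^{N-k}\,\mu$ with $k=\lceil\tfrac{N+\alpha}{2}\rceil$; expanding $\log(1\pm\rho_{0})$ to second order then gives $(1+\rho_{0})^{k}(1-\rho_{0})^{N-k}\ge\exp\bigl(\alpha\rho_{0}-N\rho_{0}^{2}\bigr)\ge e^{C/4}\ge2$ once $C$ is large and $C_{0}$ is large relative to $C$ (here $\alpha\rho_{0}\ge C/2$ and $N\rho_{0}^{2}\le C^{2}/C_{0}^{2}$, using $\alpha>C_{0}\sqrt N$). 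Thus $\sum_{S}|\widehat g(S)|\rho_{0}^{|S|}\ge2\mu$, which by the characterization of $\rho(\psi_{N,\alpha})$ above forces $\rho(\psi_{N,\alpha})\le\rho_{0}=C/(\alpha+\sqrt N)$.

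The delicate point is the \emph{uniformity} over the whole range $0\le\alpha<N$ of the two binomial facts used — the lower bound $\mathbb{P}(S_{N}\ge\alpha)\gtrsim e^{-O(\alpha^{2}/N)}$ and the tail-comparison inequality — with \emph{absolute} constants and with no stray polynomial-in-$N$ factors: a one-atom lower bound for $\mathbb{P}(S_{N}\ge\alpha)$ costs a factor $\sqrt N$, which for $\alpha\asymp\sqrt N$ would only give $\rho(\psi_{N,\alpha})\gtrsim(N\log N)^{-1/2}$ and destroy the result, so the window argument together with the two splittings at $\alpha\asymp\sqrt N$ and at $\alpha\asymp N$ is exactly what makes the constants match. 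Finally, running the same level-one computation for $\alpha=0$ and $N$ odd, where $\widehat{\psi_{N,0}}(\{1\})=N^{-1}\mathbb{E}|S_{N}|$ admits a precise asymptotics, pins down the constant and yields Corollary~\ref{Coro:BooleanRadiusMaj}.
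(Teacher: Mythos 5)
Your argument is essentially correct, but it follows a genuinely different route from the paper. The paper proves Theorem \ref{Theo:BooleanRadiusSignFunction} through an exact identity for the weighted Fourier coefficients of $\psi_{N,\alpha}$ (discrete derivative plus the noise operator evaluated at $\mathbf{1}$, Theorem \ref{Theo:BestEstimationRadiusSignFunctions}), converts between coefficient sums and suprema on the circle via the one--variable polynomial Bohr radius $R_N\geq 1/3$ of Guadarrama and Dryanov--Fournier, integrates, and then invokes McKay's binomial estimate (Lemma \ref{Lemm:McKayEstimations}) and an explicit computation of the function $G$ to arrive at the sharper two-sided bound $\rho(\psi_{\alpha,N})\asymp N^{-1/2}Y\bigl((\alpha+1)/\sqrt{N}\bigr)$ of Proposition \ref{Theo:YEstimationRadiusSignFunctions}, from which the theorem and the parity-adjustment for general $\alpha$ follow. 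You instead obtain the lower bound by applying the hypercontractive single-function estimate of Theorem \ref{Theo:biasedBooleanRadius} to $\psi_{N,\alpha}$ itself, together with a Gaussian-type lower bound for the binomial tail $\mathbb{P}(S_N\geq\alpha)\gtrsim e^{-O(1+\alpha^2/N)}$ (your window argument; this is in substance the paper's Lemma \ref{Lemm:EstimationSignAux}, proved there for a different purpose) and the trivial bound $\rho\geq 2^{1/N}-1$ from Theorem \ref{Theo:BooleanRadius} when $\alpha\asymp N$; and you obtain the upper bound from the level-one coefficient when $\alpha\lesssim\sqrt N$ and, when $\alpha\gtrsim\sqrt N$, from the biased-measure comparison $\sum_S|\widehat g(S)|\rho_0^{|S|}\geq T_{\rho_0}g(\mathbf 1)=\mathbb{P}(y_1+\dots+y_N\geq\alpha)\geq(1+\rho_0)^{k}(1-\rho_0)^{N-k}\mu\geq 2\mu$, which is a one-sided use of the same probabilistic interpretation of $T_\rho$ at $\mathbf 1$ that the paper exploits as an identity. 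No circularity arises, since Theorems \ref{Theo:biasedBooleanRadius} and \ref{Theo:BooleanRadius} are proved independently of the threshold functions. What each approach buys: yours is more elementary and avoids the $R_N$ input, McKay's estimate and the explicit analysis of $G$, at the price of leaving several routine but necessary uniformities to be checked (a Berry--Esseen or direct binomial bound to make $\mathbb{P}(S_N\geq C_0\sqrt N)\geq c(C_0)$ uniform in $N$, the second-order bound $(1+\rho_0)^k(1-\rho_0)^{N-k}\geq\exp(\alpha\rho_0-cN\rho_0^2)$, and the small-$N$ and window-truncation cases); the paper's heavier machinery yields strictly more, namely the precise $Y$-asymptotics of Proposition \ref{Theo:YEstimationRadiusSignFunctions}. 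One caveat: your closing claim that the level-one computation alone ``pins down the constant'' in Corollary \ref{Coro:BooleanRadiusMaj} is not justified --- a single Fourier level only gives a one-sided bound on $\rho(\Maj_N)$, and the exact constant $\gamma$ really uses the full generating identity \eqref{equa:LemmFourierSpectrumSignFunctionsAux1}; but that corollary is not part of the statement under review, so this does not affect your proof of Theorem \ref{Theo:BooleanRadiusSignFunction}.
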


\noindent The proof is involved and requires a series of preliminary results. We start with a~technical
estimate for the Boolean radius with some extra assumption on $\alpha$.
For $N \in \N$ with  $0 \leq \alpha < N$ let $G\colon \mathbb{R}_{+} \rightarrow \mathbb{R}_{+}$ be the function  given by
\begin{equation} \label{G}
G(r) := \sup_{z \in \mathbb{T}}{|1 + z r|^{\frac{N+\alpha - 1}{2}} |1- z r|^{\frac{N-\alpha - 1}{2}}}, \quad\, r\in \mathbb{R}_{+}\,,
\end{equation}
and use it to define $\mathcal{I}\colon \mathbb{R}_{+}  \rightarrow \mathbb{R}_{+}$  by
\begin{equation} \label{I}
\mathcal{I}(\rho) = \int_{0}^{\rho} G(r) \: d r, \quad\, \rho\in \mathbb{R}_{+}\,.
\end{equation}

Before we state and prove the  required preliminary result we define, following Guadarrama \cite{Guad},
the Bohr radius for the class $\mathcal{P}_N$ of all complex polynomials of degree at most $N$ by
\begin{equation} \label{Fournier}
R_{N}:= \sup\bigg\{r>0: \, \sum_{k=0}^{N} |c_k(p)|r^k \leq \|p\|_{\mathbb{D}},\,\,\, p(z) = \sum_{k=0}^N c_k(p) z^k, \,\, z\in \mathbb{D}\bigg\}.
\end{equation}
It was proved in \cite{Guad} that there exist absolute constants $c_1$, $c_2>0$ such that for large enough $N$,
\[
\frac{1}{3} + \frac{c_1}{3^{N/2}} < R_N < \frac{1}{3} + c_2 \frac{\log N}{N}.
\]
Dryanov and Fournier \cite{DryFour} proved the following remarkable estimate: For every polynomial $p\in \mathcal{P}_N$,
$p(z)= \sum_{k=0}^{N} c_k z^k$ we have
\[
|c_k| \leq 2 \cos\bigg(\frac{\pi}{[\frac{N}{k}] + 2}\bigg) \big(\|p\|_{\mathbb{D}} - |c_0|\big),\quad\,  1\leq k\leq N,
\]
where as usual $[\frac{N}{k}]$ denotes the integer part of $\frac{N}{k}$. This estimations implies that the unique root
$t_N$ in $(0,1]$ of the equation
\[
\sum_{k=0}^{N} \cos\bigg(\frac{\pi}{[\frac{N}{k}] + 2}\bigg)t^k = \frac{1}{2}
\]
satisfies $0<t_N \leq R_N$. We refer to a~remarkable Fournier's article \cite{Fournier}, where among others it is shown that
in fact $t_N <R_N$ for each $N$.

\begin{Theo}\label{Theo:BestEstimationRadiusSignFunctions}
For $N \in \N$ and $0 \leq \alpha < N$ such that $N-\alpha$ is odd let $\rho = \rho_{\alpha, N}$ be the Boolean radius of $\psi = \psi_{\alpha, N}$. Then
\begin{equation}\label{equa:BestEstimationRadiusSignFunctions1}
\mathcal{I}(\rho) \leq \frac{1}{N  \binom{N-1}{\frac{N - \alpha - 1}{2}}} \sum_{n \leq \frac{N - \alpha -1}{2}}{\binom{N}{n}}
\leq R_N \,\mathcal{I}\Big(\frac{\rho}{R_N}\Big)\,,
\end{equation}
where $R_N$ is the Bohr radius defined in \eqref{Fournier}.
\end{Theo}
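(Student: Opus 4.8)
The plan is to encode the whole Fourier--Walsh spectrum of $\psi=\psi_{\alpha,N}$ into one univariate polynomial $P$, to show that $P'$ factors as a single product of powers of $1\pm w$, and then to extract both inequalities of \eqref{equa:BestEstimationRadiusSignFunctions1} from the Bohr phenomenon for $\mathcal{P}_N$ applied to $P'$.

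Since $\psi$ is invariant under permutations of the coordinates, $\widehat\psi(S)$ depends only on $k=|S|$; call this common value $W_k$. The key object is
\[
P(w):=\mathbb{E}_{x}\Big[\psi(x)\prod_{i=1}^{N}(1+wx_i)\Big]=\sum_{S\subset[N]}\widehat\psi(S)\,w^{|S|}=\sum_{k=0}^{N}\binom{N}{k}W_k\,w^{k},
\]
so that $c_k(P)=\binom{N}{k}W_k$ and hence $\sum_{S}|\widehat\psi(S)|r^{|S|}=\sum_{k}|c_k(P)|r^{k}$. In particular, the defining equation of the Boolean radius reads $\sum_k|c_k(P)|\rho^k=\|\psi\|_\infty=1$, whence $\sum_{k\ge1}|c_k(P)|\rho^k=1-|P(0)|$, where $P(0)=W_0=\mathbb{E}[\psi]$. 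Grouping the expectation by the number $m$ of coordinates equal to $-1$, and using that (as $N-\alpha$ is odd) $\psi(x)=\sign(N-2m-\alpha)$ equals $+1$ exactly when $m\le a:=\tfrac{N-\alpha-1}{2}\in\mathbb{N}_0$, one obtains
\[
P(w)=\frac{1}{2^{N-1}}\sum_{m=0}^{a}\binom{N}{m}(1+w)^{N-m}(1-w)^{m}-1 .
\]
The decisive point is that the derivative of this sum telescopes: differentiating term by term and using $(N-m)\binom{N}{m}=N\binom{N-1}{m}$ together with $m\binom{N}{m}=N\binom{N-1}{m-1}$, all contributions cancel except one, giving
\[
P'(w)=C_0\,(1+w)^{b}(1-w)^{a},\qquad C_0:=\frac{N\binom{N-1}{a}}{2^{N-1}}>0,\quad b:=N-1-a=\tfrac{N+\alpha-1}{2}
\]
(equivalently, this is the classical formula for the derivative of a binomial distribution function, evaluated at success probability $\tfrac{1-w}{2}$).

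Two consequences are immediate. First, by the maximum modulus principle, $\|P'\|_{\mathbb{D}_r}:=\max_{|w|\le r}|P'(w)|=C_0\max_{z\in\mathbb{T}}|1+rz|^{b}|1-rz|^{a}=C_0\,G(r)$ for every $r\ge0$ (here $\mathbb{D}_r$ denotes the disc of radius $r$). Second, since $a\le\tfrac{N-1}{2}$ one checks $P(0)=W_0=\tfrac{1}{2^{N-1}}\sum_{m\le a}\binom{N}{m}-1\le0$, so $1-|P(0)|=1+W_0=\tfrac{1}{2^{N-1}}\sum_{m\le a}\binom{N}{m}$ and therefore
\[
\sum_{k\ge1}|c_k(P)|\rho^{k}=1-|P(0)|=C_0\,M,\qquad M:=\frac{1}{N\binom{N-1}{a}}\sum_{n\le a}\binom{N}{n},
\]
where $M$ is the middle quantity of \eqref{equa:BestEstimationRadiusSignFunctions1}. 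Finally, from $c_k(P)=\tfrac1k c_{k-1}(P')$ ($k\ge1$) one gets the integral representation $\sum_{k\ge1}|c_k(P)|\rho^{k}=\int_0^{\rho}\big(\sum_{j\ge0}|c_j(P')|\,t^{j}\big)\,dt$.

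Both halves of \eqref{equa:BestEstimationRadiusSignFunctions1} now follow by comparing $\sum_j|c_j(P')|t^j$ with $\|P'\|_{\mathbb{D}}$ at suitable radii and integrating. For the left-hand inequality the triangle inequality gives $\|P'\|_{\mathbb{D}_t}\le\sum_j|c_j(P')|t^j$, hence $C_0\,\mathcal{I}(\rho)=\int_0^\rho\|P'\|_{\mathbb{D}_t}\,dt\le\int_0^\rho\sum_j|c_j(P')|t^j\,dt=C_0\,M$, that is, $\mathcal{I}(\rho)\le M$. For the right-hand inequality we use the definition of the polynomial Bohr radius $R_N$ from \eqref{Fournier}: applying it to the rescaled polynomial $w\mapsto P'\!\big(\tfrac{t}{R_N}w\big)\in\mathcal{P}_{N-1}\subset\mathcal{P}_N$ yields, for every $t>0$, $\sum_j|c_j(P')|t^{j}\le\|P'\|_{\mathbb{D}_{t/R_N}}=C_0\,G(t/R_N)$; integrating and substituting $u=t/R_N$,
\[
C_0\,M=\int_0^\rho\sum_j|c_j(P')|t^j\,dt\le\int_0^\rho C_0\,G(t/R_N)\,dt=C_0\,R_N\int_0^{\rho/R_N}G(u)\,du=C_0\,R_N\,\mathcal{I}(\rho/R_N),
\]
which is precisely $M\le R_N\,\mathcal{I}(\rho/R_N)$. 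The main obstacle is the middle step: identifying $P(w)=\mathbb{E}[\psi(x)\prod_i(1+wx_i)]$ as the right object and verifying that its derivative collapses to the single product $C_0(1+w)^{b}(1-w)^{a}$; once this is in hand, everything else is a routine combination of the maximum modulus principle, the elementary integral identity for the ``Bohr tail'' $\sum_{k\ge1}|c_k|\rho^k$, and Bohr's inequality for $\mathcal{P}_N$ applied to rescalings of $P'$.
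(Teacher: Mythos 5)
Your proposal is correct, and its overall architecture coincides with the paper's: both proofs hinge on the identity that the generating polynomial of the level coefficients of $\psi$ has derivative $C_0(1+w)^{\frac{N+\alpha-1}{2}}(1-w)^{\frac{N-\alpha-1}{2}}$ (your $P'$ is exactly $N$ times the polynomial in the paper's identity \eqref{equa:LemmFourierSpectrumSignFunctionsAux1}), after which both arguments sandwich $C_0 G(r)$ between $\sum_j|c_j(P')|(R_N t)^j$ (Bohr's inequality for $\mathcal{P}_N$, via the maximum modulus principle) and $\sum_j|c_j(P')|t^j$ (triangle inequality), integrate over $[0,\rho]$, and identify $\sum_{k\geq 1}|c_k(P)|\rho^k=1-|\widehat{\psi}(\emptyset)|$ with the binomial tail $\frac{1}{2^{N-1}}\sum_{n\leq\frac{N-\alpha-1}{2}}\binom{N}{n}$. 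The only genuine difference is local: the paper derives the product identity by applying the noise operator $T_r$ to the discrete derivative $D_N\psi$ and reading off $T_rD_N\psi(1,\dots,1)$ as the probability that a biased sum equals $\alpha$, whereas you obtain it by writing $P(w)=\mathbb{E}[\psi(x)\prod_i(1+wx_i)]$, summing over the levels of the cube, and letting the differentiated sum telescope; likewise you evaluate the constant term directly from the explicit formula for $P$ (noting $P(0)\leq 0$) rather than via $\widehat{\psi}(\emptyset)=2\sigma(x_1+\dots+x_N>\alpha)-1$. Your route is somewhat more self-contained (no appeal to the probabilistic semantics of $T_r$), while the paper's makes the combinatorial meaning of the identity transparent; both yield exactly the two-sided bound \eqref{equa:BestEstimationRadiusSignFunctions1}.
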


\begin{proof}
We start by establishing that for every complex number $z$ it holds that
\begin{equation}\label{equa:LemmFourierSpectrumSignFunctionsAux1}
\sum_{n=1}^{N}{\binom{N-1}{n-1}} \widehat{\psi}([n]) z^{n-1} = \binom{N-1}{\frac{N-\alpha - 1}{2}}
\frac{1}{2^{N-1}} (1 + z)^{\frac{N+\alpha - 1}{2}} (1 - z)^{\frac{N-\alpha - 1}{2}}.
\end{equation}
For that, we make use of the $N$-th discrete derivative of a~function on the Boolean cube \cite[Section 2.2]{BooleanRyan},
which applied to $\psi$ gives a function $D_{N}\psi : \{ -1,1\}^{N-1} \longrightarrow \mathbb{R}$ defined as
\begin{align*}
D_{N}\psi(x_{1}, \ldots, x_{N-1}) & := \frac{\psi(x_{1}, \ldots, x_{N-1}, 1) - \psi(x_{1}, \ldots, x_{N-1}, -1)}{2}\\
& = \begin{cases} 1 & \mbox{ if $ x_{1} + \ldots + x_{N-1} = \alpha$}\\ 0 & \mbox{ otherwise } \end{cases}
\end{align*}
The Fourier expansion of $D_{N}\psi$ can be explicitly calculated in terms of the Fourier coefficients of $\psi$
(see \cite[p. 30, Prop. 2.9]{BooleanRyan} for a detailed proof). Using moreover that $\psi$ is invariant by permutations
of its coordinates, which yields that each $\emptyset \neq S \subset [N]$ satisfies  $\widehat{\psi}(S) = \widehat{\psi}([n])$
whether $|S| = n$, we obtain that
\begin{equation}\label{equa:LemmFourierSpectrumSignFunctionsAux3}
D_{N}\psi(x) = \sum_{N \in S \subset [N]}{\widehat{\psi}(S) \, x^{S \setminus \{N\}}} = \sum_{n=1}^{N}{ \widehat{\psi}([n])
\sum_{\substack{N \in S \subset [N]\\ |S| = n }} x^{S \setminus \{ N\}}}.
\end{equation}
Hence, applying the noise operator $T_{r}$ ($-1 < r < 1$) to $D_{N}\psi$ we get that
\begin{equation}\label{equa:LemmFourierSpectrumSignFunctionsAux4}
T_{r}D_{N}\psi(1,\ldots, 1) = \sum_{n=1}^{N}{\binom{N-1}{n-1} \widehat{\psi}([n]) \, r^{n-1}}.
\end{equation}
On the other hand, $T_{r}D_{N}\psi$ has the following probabilistic definition (see \cite[Section 2.4]{BooleanRyan}): if $\mathbf{x}_{i}$ are independent random variables taking values $1$ and $-1$ with probability $\frac{1}{2} + \frac{1}{2} r$ and $\frac{1}{2} - \frac{1}{2} r$ respectively, then
\begin{equation}\label{equa:LemmFourierSpectrumSignFunctionsAux5}
\begin{split}
T_{r}D_{N}\psi(1, \ldots, 1) & = \mathbb{E}{\big[D_{N}\psi(\mathbf{x}_{1}, \ldots, \mathbf{x}_{N-1}) \big]} = \mathbb{P}(\mathbf{x}_{1} + \ldots + \mathbf{x}_{N-1} = \alpha)\\
& = \binom{N-1}{\frac{N- \alpha-1}{2}}\frac{1}{2^{N-1}} \big( 1+ r \big)^{\frac{N+\alpha-1}{2}} \big( 1-r\big)^{\frac{N - \alpha-1}{2}}
\end{split}
\end{equation}
Comparing \eqref{equa:LemmFourierSpectrumSignFunctionsAux4} and \eqref{equa:LemmFourierSpectrumSignFunctionsAux5} we conclude that \eqref{equa:LemmFourierSpectrumSignFunctionsAux1} holds  as both complex polynomials coincide on $r \in (-1,1)$.
It follows from  \eqref{equa:LemmFourierSpectrumSignFunctionsAux1} that
\begin{equation}\label{equa:LemmFourierSpectrumSignFunctionsAux2}
\sup_{z \in \mathbb{T}}{\left| \sum_{n=1}^{N}{\binom{N-1}{n-1} \widehat{\psi}([n]) (r z)^{n-1}} \right|}  = \binom{N-1}{\frac{N-\alpha-1}{2}} \frac{G(r)}{2^{N-1}}, \quad\, r \geq 0\,,
\end{equation}
where $G$ was defined in \eqref{G}. Using now the definition of  $R_N$ from \eqref{Fournier}, we deduce that for every $r >  0$
\[
\sum_{n=1}^{N}{\binom{N-1}{n-1} |\widehat{\psi}([n])| \big(r R_{N}\big)^{n-1}} \leq \binom{N-1}{\frac{N - \alpha -1}{2}} \frac{ G(r)}{2^{N-1}} \leq \sum_{n=1}^{N}{\binom{N-1}{n-1} |\widehat{\psi}([n])| r^{n-1}}.
\]
Integrating the previous inequality over the interval $[0, R]$ with $R>0$, yields that
\begin{equation}
\label{equa:BestEstimationRadiusSignFunctions3}
\frac{1}{N R_N} \sum_{n=1}^{N}{\binom{N}{n} |\widehat{\psi}([n])| \big(R R_N\big)^n} \leq \binom{N-1}{\frac{N - \alpha -1}{2}}
\frac{ \mathcal{I}(R) }{2^{N-1}} \leq \frac{1}{N} \sum_{n=1}^{N}{\binom{N}{n} |\widehat{\psi}([n])| R^{n}}\,.
\end{equation}
Replacing in the previous expression the variable $R$ first by  $\rho/R_N$ and later by $\rho$, we obtain  that
\begin{equation}\label{equa:BestEstimationRadiusSignFunctions666}
\frac{N}{2^{N-1}} \binom{N-1}{\frac{N-\alpha-1}{2}}  \mathcal{I}(\rho) \leq \sum_{n=1}^{N}{ \binom{N}{n} \, |\widehat{\psi}([n])| \rho^{n}} \leq \frac{N}{2^{N-1}} \binom{N-1}{\frac{N-\alpha-1}{2}} R_N \mathcal{I}\Big(\frac{\rho}{R_N}\Big)\,.
\end{equation}

Note that by the very definition of the Boolean radius, the number $\rho$ satisfies that
\begin{equation}\label{equa:BestEstimationRadiusSignFunctions4}
\sum_{n=1}^{N}{\binom{N}{n} |\widehat{\psi}([n])| \rho^{n}} = 1 - |\widehat{\psi}(\emptyset)|.
\end{equation}
The expectation of $\psi$, this is $\widehat{\psi}(\emptyset)$, can be easily calculated as
\begin{align*}
\widehat{\psi}(\emptyset) & = \sigma(x_{1} + \ldots +x_{N} > \alpha) - \sigma(x_{1} + \ldots +x_{N} < \alpha)\\
& = 2 \, \sigma(x_{1} + \ldots + x_{N} > \alpha) - 1,
\end{align*}
so \eqref{equa:BestEstimationRadiusSignFunctions4} yields that
\begin{equation}\label{equa:BestEstimationRadiusSignFunctions5}
\sum_{n=1}^{N}{\binom{N}{n} |\widehat{\psi}([n])| \rho^{n}} = 2 \, \sigma(x_{1} + \ldots + x_{N} > \alpha).
\end{equation}
Note that $x_{1} + \ldots +x_{N}$ depends just on the number of $1$'s and $-1$'s: if $m$ variables are equal to $-1$ and $N-m$ are equal to $1$ then $x_{1} + \ldots +x_{N} = N - 2m$, so that
\[
\sigma(x_{1} + \ldots + x_{N} > \alpha) = \frac{1}{2^{N}} \sum_{N - 2m > \alpha}{ \binom{N}{m} }\,,
\]
and hence \eqref{equa:BestEstimationRadiusSignFunctions5} leads to
\begin{equation*}
\sum_{n=1}^{N}{\binom{N}{n} |\widehat{\psi}([n])| \rho^{n}} =  \frac{1}{2^{N-1}} \sum_{m \leq \frac{N - \alpha -1}{2}}{\binom{N}{m}}.
\end{equation*}
Applying this equality to the middle sum of \eqref{equa:BestEstimationRadiusSignFunctions666}, we conclude that both inequalities of \eqref{equa:BestEstimationRadiusSignFunctions1} are valid.
\end{proof}

An immediate consequence of  Theorem \ref{Theo:BestEstimationRadiusSignFunctions} is the following corollary.

\begin{Coro}\label{Coro:BestEstimationRadiusSignFunctions}
For $N \in \N$ and $0 \leq \alpha < N$ such that $N-\alpha$ is odd let $\rho = \rho_{\alpha, N}$ be the Boolean radius of
$\psi = \psi_{\alpha, N}$. Then \begin{equation}\label{equa:BestEstimationRadiusSignFunctions1}
\mathcal{I}(\rho) \leq \frac{1}{N  \binom{N-1}{\frac{N - \alpha - 1}{2}}} \sum_{n \leq \frac{N - \alpha -1}{2}}{\binom{N}{n}}
\leq \frac{1}{3}\,\mathcal{I}(3\rho)\,.
\end{equation}
\end{Coro}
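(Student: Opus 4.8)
The plan is to obtain the corollary from Theorem~\ref{Theo:BestEstimationRadiusSignFunctions} simply by replacing the Bohr radius $R_N$ on the right-hand side of that theorem by the (smaller) number $1/3$. The left-hand inequality of the corollary is verbatim the left-hand inequality of the theorem, so nothing has to be proved there. For the right-hand inequality it is enough to establish
\[
R_N\,\mathcal{I}\!\left(\frac{\rho}{R_N}\right)\ \le\ \frac{1}{3}\,\mathcal{I}(3\rho)
\]
and then chain this bound with the right-hand inequality of the theorem. Two ingredients are required: that $R_N\ge 1/3$, and that the map $s\mapsto \mathcal{I}(s)/s$ is non-decreasing on $(0,\infty)$.

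The bound $R_N\ge 1/3$ is immediate from the classical Bohr inequality \eqref{power}: a polynomial $p$ of degree at most $N$ in particular satisfies $\sum_{k=0}^{N}|c_k(p)|\,3^{-k}\le\|p\|_{\mathbb{D}}$, so $r=1/3$ is admissible in the supremum \eqref{Fournier} defining $R_N$ (this is also visible from the bounds of \cite{Guad} recalled above). For the monotonicity statement I would first show that the function $G$ from \eqref{G} is non-decreasing on $\mathbb{R}_+$. Set $p=\tfrac{N+\alpha-1}{2}\ge 0$ and $q=\tfrac{N-\alpha-1}{2}\ge 0$, where the nonnegativity of $q$ uses that $N-\alpha$ is a positive odd integer. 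The function $u(w)=p\log|1+w|+q\log|1-w|$ is subharmonic on $\mathbb{C}$ (harmonic off $\{\pm 1\}$ and equal to $-\infty$ there), so the maximum principle gives $\sup_{|w|=r}u(w)=\sup_{|w|\le r}u(w)$; hence $G(r)=\exp\big(\sup_{|w|\le r}u(w)\big)$ is non-decreasing in $r$, because the disks $\{|w|\le r\}$ grow with $r$. Since $G$ is continuous and non-decreasing, its average $\mathcal{I}(s)/s=\tfrac{1}{s}\int_0^s G(r)\,dr$ over $[0,s]$ is non-decreasing in $s>0$.

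With these two facts in hand, $R_N\ge 1/3$ forces $\rho/R_N\le 3\rho$, whence
\[
\frac{R_N}{\rho}\,\mathcal{I}\!\left(\frac{\rho}{R_N}\right)=\frac{\mathcal{I}(\rho/R_N)}{\rho/R_N}\ \le\ \frac{\mathcal{I}(3\rho)}{3\rho}=\frac{1}{3\rho}\,\mathcal{I}(3\rho);
\]
multiplying through by $\rho$ yields the displayed inequality, and combining it with Theorem~\ref{Theo:BestEstimationRadiusSignFunctions} gives the corollary. The only point that is more than bookkeeping is the monotonicity of $G$: one should be careful to argue via subharmonicity rather than the plain maximum modulus principle, so that the singularities at $w=\pm1$ cause no trouble when $r\ge 1$; everything else is routine.
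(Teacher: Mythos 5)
Your proposal is correct and follows essentially the same route as the paper: both deduce the corollary from Theorem~\ref{Theo:BestEstimationRadiusSignFunctions} by combining $R_N \geq 1/3$ with the monotonicity of $s \mapsto \mathcal{I}(s)/s$, which in turn comes from $G$ being non-decreasing. Your subharmonicity argument for the monotonicity of $G$ is just a more careful version of the paper's appeal to the maximum modulus principle (under the oddness assumption the exponents are nonnegative integers, so the function is a polynomial and the plain maximum modulus principle already suffices).
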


\begin{proof} Since $\frac{1}{3} \leq R_N$ and $G$ is a strictly increasing continuous function on $\mathbb{R}_{+}$,
the function $r \mapsto \mathcal{I}(r)/r$ is increasing on $(0, \infty)$. In consequence
\[
R_N \mathcal{I}\Big(\frac{\rho}{R_N}\Big) \leq \frac{1}{3}\,\mathcal{I}(3\rho)\,,
\]
and so Theorem \ref{Theo:BestEstimationRadiusSignFunctions} applies.
\end{proof}

We now start to analyze \eqref{equa:BestEstimationRadiusSignFunctions1}. In order to control the  combinatorial numbers appearing in  \eqref{equa:BestEstimationRadiusSignFunctions1}
we need an auxiliary result from \cite{Mckay}, and to state this  result  we introduce the function $Y\colon [0,\infty) \to [0,\infty)$ defined by
\begin{equation} \label{definition}
Y(x) = e^{\frac{1}{2}x^2} \int_{x}^{\infty}e^{-\frac{1}{2}t^2}\,dt, \quad\, x\geq 0.
\end{equation}

\begin{Lemm}\label{Lemm:McKayEstimations}
For each $N \in \N$ and each $0 \leq \alpha <N$ with $N - \alpha$ being an odd integer, there exists a~real number $0 \leq c_{\alpha,N} \leq \sqrt{\pi/2}$ such that
\[
\sum_{n \leq \frac{N - \alpha - 1}{2}}{\binom{N}{n}} = \sqrt{N}\,\binom{N-1}{\frac{N - \alpha -1}{2}}\,Y\left( \frac{\alpha + 1}{\sqrt{N}} \right)
\,\exp{\left(  \frac{c_{\alpha, N}}{\sqrt{N}} \right)}\,.
\]
\end{Lemm}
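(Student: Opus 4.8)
The plan is to derive the stated identity by recognizing $\sum_{n \le \frac{N-\alpha-1}{2}}\binom{N}{n}$ as (essentially) a tail of the binomial distribution and then invoking a sharp asymptotic for such tails from \cite{Mckay}. First I would set $k = \frac{N-\alpha-1}{2}$, so that the left-hand sum is $\sum_{n \le k}\binom{N}{n}$ and the prefactor $\binom{N-1}{k} = \binom{N-1}{\frac{N-\alpha-1}{2}}$ is precisely the ``central'' term governing the ratio. The key observation is the elementary identity
\[
\sum_{n\le k}\binom{N}{n} = \binom{N-1}{k}\sum_{n\le k}\frac{\binom{N}{n}}{\binom{N-1}{k}},
\]
but more usefully, McKay's estimate directly expresses a ratio of the form $\big(\sum_{n\le k}\binom{N}{n}\big)\big/\binom{N-1}{k}$ in terms of the Mills-ratio-type function $Y$ evaluated at the normalized deviation. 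The natural normalization here is $x = \frac{\alpha+1}{\sqrt N}$, because $k = \frac{N-1}{2} - \frac{\alpha+1}{2} + \tfrac{1}{2}$ sits $\frac{\alpha+1}{2}$ steps below the center $\frac{N-1}{2}$ of a symmetric binomial on $N-1$ trials (up to the half-integer shift inherent in the odd-parity assumption), and the standard deviation of that binomial is $\tfrac{1}{2}\sqrt{N-1} \approx \tfrac{1}{2}\sqrt N$; the factor $2$ cancels so the relevant Gaussian argument is $\frac{\alpha+1}{\sqrt N}$.

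Next I would quote the relevant theorem from \cite{Mckay}: it gives, for a cumulative binomial tail, an exact formula of the shape
\[
\sum_{n\le k}\binom{N}{n} = \sqrt{N}\,\binom{N-1}{k}\,Y\!\left(\tfrac{\alpha+1}{\sqrt N}\right)\exp\!\left(\tfrac{\theta}{\sqrt N}\right)
\]
with an explicit, uniformly bounded error exponent $\theta$, and then verify that the hypotheses (here $N-\alpha$ odd, $0 \le \alpha < N$, which forces $0 \le k \le \frac{N-1}{2}$) fall within McKay's range of validity. The content I would need to extract and perhaps re-prove from \cite{Mckay} is the two-sided bound $0 \le c_{\alpha,N}\le \sqrt{\pi/2}$: the lower bound should come from the fact that the correction term in McKay's expansion has a definite sign (the continuous Gaussian integral $Y$ slightly overestimates or underestimates the discrete tail in a controlled direction), and the upper bound $\sqrt{\pi/2}$ is recognizable as $Y(0) = \sqrt{\pi/2}$, which suggests the cleanest route is to bound the multiplicative error by the ``worst case'' $\alpha = -1$ (i.e.\ $x=0$, the most symmetric situation) where the discrepancy between the discrete tail-ratio and its Gaussian surrogate is largest.

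The main obstacle I anticipate is matching McKay's normalization conventions precisely to the half-integer shift forced by the odd-parity hypothesis, and in particular pinning down that the error exponent is nonnegative and at most $\sqrt{\pi/2}$ rather than merely $O(1/\sqrt N)$; getting the constant $\sqrt{\pi/2}$ exactly right (as opposed to some other absolute constant) requires care about which asymptotic series \cite{Mckay} uses and how its remainder is controlled. A secondary but routine check is that all the combinatorial identities relating $\binom{N}{n}$, $\binom{N-1}{n-1}$, and $\binom{N-1}{k}$ hold with the stated indices; these are standard Pascal-type manipulations. Once McKay's estimate is correctly transcribed with the normalization $x = (\alpha+1)/\sqrt N$, the lemma follows immediately, so the entire proof is essentially a careful citation plus a verification that the error term lies in $[0,\sqrt{\pi/2}]$.
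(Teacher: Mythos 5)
Your proposal is correct and follows essentially the same route as the paper: both proofs reduce the statement to a direct citation of McKay's binomial tail estimate, which (as cited in the paper) already comes with the error bound $0 \leq E(k,N) \leq \min\bigl(\sqrt{\pi/2},\, 2\sqrt{N}/(2k-N)\bigr)$, so the sign and the constant $\sqrt{\pi/2}$ you worried about need no re-derivation. The only step you leave implicit is the one the paper makes explicit: reflect the lower tail via $\binom{N}{n}=\binom{N}{N-n}$ and apply McKay's formula for the upper tail with $k=\frac{N+\alpha+1}{2}$, which gives exactly $2k-N=\alpha+1$ and $\binom{N-1}{N-k}=\binom{N-1}{\frac{N-\alpha-1}{2}}$.
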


\begin{proof}
It is proved in \cite{Mckay}  that given $N \geq 1$ and $N/2 \leq k \leq N$, we have that
\begin{equation}\label{equa:MckayEstimation}
\sum_{j=k}^{N}{\binom{N}{j}} = \sqrt{N} \, \binom{N-1}{N - k} \, Y\left( \frac{2k - N}{\sqrt{N}} \right) \,  \exp{\left( \frac{E(k,N)}{ \sqrt{N}} \right)}
\end{equation}
for some real number
\[
0 \leq E(k,N) \leq \min{\left(\sqrt{\frac{\pi}{2}}, \frac{2 \sqrt{N}} {2k - N}\right)}.
\]
The properties of the binomial coefficients yield that
\[
\sum_{n \leq \frac{N - \alpha - 1}{2}}{\binom{N}{n}} =  \sum_{n \leq \frac{N - \alpha - 1}{2}}{\binom{N}{N-n}} = \sum_{n = \frac{N + \alpha +1}{2}}^{N}{\binom{N}{n}}.
\]
Therefore, replacing $k=\frac{N + \alpha + 1}{2}$ in \eqref{equa:MckayEstimation} we get the result.
\end{proof}

We finally need another technical result which sheds more light on the functions $G$ and $\mathcal{I}$ which were defined
in \eqref{G} and \eqref{I}, and appeared in  Lemma \ref{Theo:BestEstimationRadiusSignFunctions}.

\begin{Lemm}\label{Lemm:supremumTorusRewritten}
For $0 \leq \alpha \leq N-1$ we have
\[
G(r) = \begin{cases}
(1 + r)^{\frac{N+\alpha-1}{2}} (1 - r)^{\frac{N - \alpha - 1}{2}} &,\,\ r \in [0, r_{\alpha}], \\
(1 + r^{2})^{\frac{N-1}{2}} \left( 1 + \frac{\alpha}{N-1} \right)^{\frac{N+\alpha - 1}{4}}
\left( 1 - \frac{\alpha}{N-1} \right)^{\frac{N-\alpha-1}{4}} &,
\,\, r\in [r_{\alpha}, 1]
\end{cases}
\]
where
\[
r_{\alpha}:= \frac{\alpha}{(N-1) + \sqrt{(N-1)^{2} - \alpha^{2}}}\,.
\]
\end{Lemm}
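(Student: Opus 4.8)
The plan is to reduce the computation of $G(r)$ to a one–variable optimisation and solve it explicitly. Write $z = e^{i\theta}$ and $t = \cos\theta$, which ranges over $[-1,1]$ as $z$ ranges over $\mathbb{T}$, and abbreviate $p = \tfrac{N+\alpha-1}{2}$, $q = \tfrac{N-\alpha-1}{2}$, so that $p+q = N-1$, $p-q = \alpha$, and $p,q \geq 0$. Since $|1 \pm zr|^2 = 1 \pm 2rt + r^2$, we have
\[
G(r) = \sup_{t \in [-1,1]} \exp\big(\varphi_r(t)\big), \qquad
\varphi_r(t) := \tfrac{p}{2}\log(1+2rt+r^2) + \tfrac{q}{2}\log(1-2rt+r^2),
\]
where for $r \in [0,1)$ both logarithms make sense on $[-1,1]$ because $1 \pm 2rt + r^2 \geq (1-r)^2 > 0$ there; the endpoint $r=1$ is degenerate (it forces $q=0$, i.e. $\alpha = N-1$) and is covered by the same formula, or by continuity of $G$.

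First I would record that $\varphi_r$ is strictly concave on $[-1,1]$: a direct differentiation gives
\[
\varphi_r''(t) = -\frac{2pr^2}{(1+2rt+r^2)^2} - \frac{2qr^2}{(1-2rt+r^2)^2} < 0 \qquad (r>0),
\]
so the maximum of $\varphi_r$ on $[-1,1]$ is attained at the unique zero of $\varphi_r'$ if it lies in $[-1,1]$, and otherwise at the endpoint $t=1$ (never at $t=-1$, since the zero of $\varphi_r'$ will be nonnegative). Solving $\varphi_r'(t)=0$, i.e. $p(1-2rt+r^2) = q(1+2rt+r^2)$, gives the candidate
\[
t^\ast = \frac{(p-q)(1+r^2)}{2r(p+q)} = \frac{\alpha(1+r^2)}{2r(N-1)} \geq 0 .
\]

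Next I would identify the threshold $r_\alpha$. The inequality $t^\ast \leq 1$ is equivalent to $\alpha r^2 - 2(N-1)r + \alpha \leq 0$, a quadratic in $r$ whose smaller root, after multiplying numerator and denominator by $(N-1)+\sqrt{(N-1)^2-\alpha^2}$, is precisely $r_\alpha$, while its larger root is $\geq 1$; hence $t^\ast \geq 1$ on $[0,r_\alpha]$ and $t^\ast \in [0,1]$ on $[r_\alpha,1]$ (and $r_\alpha \leq 1$ since $\alpha \leq N-1$). Consequently: on $[0,r_\alpha]$ the supremum is attained at $t=1$, where $1\pm 2r + r^2 = (1\pm r)^2$, giving $G(r) = (1+r)^{p}(1-r)^{q}$; on $[r_\alpha,1]$ it is attained at $t=t^\ast$, where substitution yields $1 \pm 2rt^\ast + r^2 = (1+r^2)\big(1 \pm \tfrac{\alpha}{N-1}\big)$ and hence $G(r) = (1+r^2)^{(p+q)/2}\big(1+\tfrac{\alpha}{N-1}\big)^{p/2}\big(1-\tfrac{\alpha}{N-1}\big)^{q/2}$. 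Substituting back $p+q = N-1$, $p = \tfrac{N+\alpha-1}{2}$, $q = \tfrac{N-\alpha-1}{2}$ yields exactly the two cases of the statement, and the two expressions agree at $r = r_\alpha$ since there $t^\ast = 1$.

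I do not expect a genuine obstacle here: the argument is elementary calculus once the reduction to $\varphi_r$ is made. The points that need a little care are the bookkeeping in the quadratic that produces $r_\alpha$ (in particular checking that its larger root exceeds $1$, so that $t^\ast\leq 1$ holds throughout $[r_\alpha,1]$), and the degenerate cases $\alpha = 0$ (then $r_\alpha = 0$ and only the second formula is used), $\alpha = N-1$ (then $q = 0$, $r_\alpha = 1$, and only the first formula is used), and $r = 1$, each of which is dispatched by the same computation or by continuity of $G$.
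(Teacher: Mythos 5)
Your proposal is correct and follows essentially the same route as the paper: reduce $G(r)$ to a one-variable optimisation over $t=\operatorname{Re} z\in[-1,1]$, locate the critical point $t^\ast=\frac{\alpha(1+r^2)}{2r(N-1)}$, and split at the value $r_\alpha$ where $t^\ast=1$. The only (cosmetic) difference is that you maximise the logarithm via a concavity argument, while the paper differentiates $g_r$ directly and reads off monotonicity from the sign of the factor $t_{\alpha,r}-t$.
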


\begin{proof}
Putting $z = c + it \in \mathbb{T}$ with $c^{2} + t^{2} = 1$ we can write
\[
G(r) = \sup_{t \in [-1,1]}{(1 + r^{2} + 2 r t)^{\frac{N+\alpha - 1}{4}} (1 + r^{2} - 2 r t)^{\frac{N-\alpha - 1}{4}}}, \quad\, r\in [0,1].
\]
To show the required formula for $G$ we define for a given $r\in [0, 1]$ the function $g_r\colon [-1, 1] \to \mathbb{R}_+$ by
\begin{align}
\label{equa:functionAux1}
g_r(t)= (1 + r^{2} + 2 r t)^{\frac{N+\alpha - 1}{4}} (1 + r^{2} - 2 r t)^{\frac{N-\alpha - 1}{4}}, \quad\, t\in [-1, 1].
\end{align}
Calculus yields that the first derivative
\[
g_{r}'(t) = r^{2} (N-1) (1+r^{2} + 2rt)^{\frac{N+\alpha-1}{4}-1} (1 + r^{2} - 2rt)^{\frac{N-\alpha-1}{4}-1}
\left( t_{\alpha, r} - t \right), \quad\, t\in (-1, 1)
\]
where
\[
t_{\alpha, r} := \frac{\alpha (1 + r^{2})}{2r(N-1)}.
\]
We now observe that $t_{\alpha, r} \leq 1$ if and only if $r\in [r_{\alpha}, 1]$, and that $g_r$ attains the maximum
at $t_{\alpha, r}$. Thus
\[
G(r) = g_r(t_{\alpha, r}) = (1 +r^{2})^{\frac{N-1}{2}} \left( 1 + \frac{\alpha}{N-1} \right)^{\frac{N+\alpha - 1}{4}} \left( 1 - \frac{\alpha}{N-1} \right)^{\frac{N-\alpha-1}{4}}.
\]
In the case when $r\in [0, r_{\alpha}]$, then $g$ is increasing on $[-1,1]$ and so
\[
G(r) = g(1) = (1 + r)^{\frac{N+\alpha-1}{2}} (1 - r)^{\frac{N-\alpha-1}{2}},
\]
and this completes the proof.
\end{proof}

Now we combine the preceding two lemmas to exploit the implicit estimate for $\rho(\psi_{\alpha, N})$ given in  Theorem
 \ref{Theo:BestEstimationRadiusSignFunctions}.

\begin{Prop}
\label{Theo:YEstimationRadiusSignFunctions}
There exists an absolute constant $C>0$ such that for each $\alpha, N \in \N$ with $0 \leq \alpha < N$ and $N - \alpha$ is an odd integer we have
\[
C e^{\frac{2}{\sqrt{N}}} \frac{1}{\sqrt{N}}\,Y\left(\frac{\alpha +1}{\sqrt{N}} \right) \leq \rho(\psi_{\alpha, N}) \leq e^{\frac{2}{\sqrt{N}}}
\frac{1}{\sqrt{N}}\,Y\left(\frac{\alpha +1}{\sqrt{N}} \right).
\]
\end{Prop}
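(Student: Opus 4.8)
The plan is to extract $\rho=\rho(\psi_{\alpha,N})$ from the two–sided estimate of Corollary~\ref{Coro:BestEstimationRadiusSignFunctions}. Substituting the asymptotic expansion of Lemma~\ref{Lemm:McKayEstimations} into the middle term there (that middle term is precisely $\frac{1}{N\binom{N-1}{(N-\alpha-1)/2}}\sum_{n\le (N-\alpha-1)/2}\binom{N}{n}$), Corollary~\ref{Coro:BestEstimationRadiusSignFunctions} becomes
\[
\mathcal{I}(\rho)\leq A\leq \tfrac{1}{3}\,\mathcal{I}(3\rho), \qquad A:=\frac{1}{\sqrt{N}}\,Y\!\Big(\frac{\alpha+1}{\sqrt{N}}\Big)\exp\!\Big(\frac{c_{\alpha,N}}{\sqrt{N}}\Big),\quad 0\le c_{\alpha,N}\le \sqrt{\pi/2}.
\]
Since $\mathcal{I}(\rho)=\int_0^{\rho}G(r)\,dr$ with $G$ from \eqref{G}, the whole statement reduces to showing that on the short interval $[0,3\rho]$ the function $G$ is comparable to $1$: the upper estimate for $\rho$ will only need $G\ge 1$, whereas the lower estimate will need an absolute upper bound for $G$ on $[0,3\rho]$.

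For the upper bound, evaluate the supremum defining $G(r)$ at the point $z=i\in\mathbb{T}$: this already gives $G(r)\ge (1+r^2)^{(N-1)/2}\ge 1$ for every $r\ge 0$, hence $\rho\le \mathcal{I}(\rho)\le A$. Because $c_{\alpha,N}\le\sqrt{\pi/2}<2$, this yields $\rho\le e^{2/\sqrt{N}}\frac{1}{\sqrt{N}}Y\big(\tfrac{\alpha+1}{\sqrt{N}}\big)$, which is the right–hand inequality of the proposition.

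For the lower bound I first record an elementary majorant for $G$, valid for all $r\ge 0$: writing $\frac{N+\alpha-1}{2}=\frac{N-\alpha-1}{2}+\alpha$ and using $|1-z^2r^2|\le 1+r^2$ and $|1+zr|\le 1+r$ for $z\in\mathbb{T}$,
\[
G(r)\leq (1+r^2)^{\frac{N-\alpha-1}{2}}(1+r)^{\alpha}\leq \exp\!\Big(\tfrac{N}{2}r^2+\alpha r\Big).
\]
Now I bootstrap from the upper bound $\rho\le A$ just obtained. For $r\le 3\rho$ one has $\tfrac{N}{2}r^2\le \tfrac{9N}{2}A^2=\tfrac{9}{2}\,Y\big(\tfrac{\alpha+1}{\sqrt{N}}\big)^2 e^{2c_{\alpha,N}/\sqrt{N}}$, which is bounded by an absolute constant since $Y(x)\le Y(0)=\sqrt{\pi/2}$; and $\alpha r\le 3\alpha A\le 3\,\tfrac{\alpha}{\alpha+1}\,e^{c_{\alpha,N}/\sqrt{N}}$, again absolutely bounded, this time via the Mills–type inequality $Y(x)\le 1/x$ for $x>0$. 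Consequently $G(r)\le K$ on $[0,3\rho]$ for an absolute constant $K$, so $\mathcal{I}(3\rho)\le 3K\rho$. Feeding this into $A\le \tfrac{1}{3}\mathcal{I}(3\rho)$ gives $\rho\ge A/K$, and since $e^{2/\sqrt{N}}\le e^2$ and $e^{c_{\alpha,N}/\sqrt{N}}\ge 1$ we conclude $\rho\ge \frac{1}{Ke^2}\,e^{2/\sqrt{N}}\frac{1}{\sqrt{N}}Y\big(\tfrac{\alpha+1}{\sqrt{N}}\big)$, i.e. the left–hand inequality with $C=1/(Ke^2)$.

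The only genuine obstacle is the bootstrapping step: to bound $\int_0^{3\rho}G$ one must first know that $3\rho\lesssim 1/(\alpha+\sqrt{N})$, which is exactly what the (easy) upper bound for $\rho$ provides once combined with the two classical facts $Y(x)\le\sqrt{\pi/2}$ and $Y(x)\le 1/x$; everything else is bookkeeping with elementary inequalities. One should also quickly dispose of the finitely many small $N$ (for which $3\rho$ need not be $\le 1$) by absorbing them into the absolute constants, and, if a sharper intermediate estimate were desired, the elementary majorant for $G$ above could be replaced by the exact formula of Lemma~\ref{Lemm:supremumTorusRewritten}.
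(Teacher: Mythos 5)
Your proposal is correct, and its skeleton coincides with the paper's: the same starting point (Corollary~\ref{Coro:BestEstimationRadiusSignFunctions} combined with Lemma~\ref{Lemm:McKayEstimations} to get $\mathcal{I}(\rho)\leq A\leq\tfrac13\mathcal{I}(3\rho)$ with $A=\frac{1}{\sqrt N}Y(\frac{\alpha+1}{\sqrt N})e^{c_{\alpha,N}/\sqrt N}$), and the identical upper-bound step $G\geq 1$ via $z=i$, hence $\rho\leq\mathcal{I}(\rho)\leq A$. Where you genuinely deviate is the lower bound. The paper first uses that $G$ is increasing on $[0,1]$ (maximum modulus) to reduce $\tfrac13\mathcal{I}(3\rho)\leq\rho\,G(3\rho)$, and then bounds $G(3\rho)$ through the exact piecewise formula of Lemma~\ref{Lemm:supremumTorusRewritten}, splitting into the cases $3\rho\leq r_\alpha$ and $3\rho\geq r_\alpha$; in both cases it feeds in the already-proved upper bound for $\rho$ together with $Y(x)\leq 1/x$ and $Y(0)=\sqrt{\pi/2}$. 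You instead bound $G$ on all of $[0,3\rho]$ by the elementary global majorant $G(r)\leq(1+r^2)^{\frac{N-\alpha-1}{2}}(1+r)^{\alpha}\leq\exp(\tfrac N2 r^2+\alpha r)$ and run the same bootstrap (via $Y\leq\sqrt{\pi/2}$ and $Y(x)\leq 1/x$) to get $G\leq K$ there, hence $\rho\geq A/K$. This buys simplicity: you bypass Lemma~\ref{Lemm:supremumTorusRewritten} and the case split entirely, and since your majorant is valid for all $r\geq 0$ you do not even need the monotonicity of $G$ or any restriction $3\rho\leq 1$ (so your closing caveat about small $N$ is actually superfluous). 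What the paper's route buys in exchange is tighter numerical control of $G(3\rho)$ through the exact formula — irrelevant here, since only an absolute constant is claimed — and it keeps Lemma~\ref{Lemm:supremumTorusRewritten} in play, which the paper has available anyway. Your bookkeeping of the constants ($\rho\geq A/K\geq\frac{1}{K}\frac{1}{\sqrt N}Y\geq\frac{1}{Ke^2}e^{2/\sqrt N}\frac{1}{\sqrt N}Y$, and $c_{\alpha,N}\leq\sqrt{\pi/2}<2$ for the upper bound) is sound.
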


\begin{proof}
We are going to apply  Lemma \ref{Theo:BestEstimationRadiusSignFunctions}. If we denote $\rho = \rho(\psi_{\alpha, N})$ to simplify, then combining \eqref{equa:BestEstimationRadiusSignFunctions1} and Lemma \ref{Lemm:McKayEstimations} we obtain that
\begin{equation}\label{equa:BooleanRadiusSignFunctionAux1}
e^{\frac{-2}{\sqrt{N}}} \, \mathcal{I}(\rho)  \leq  \frac{1}{\sqrt{N}} Y\left( \frac{\alpha +1}{\sqrt{N}} \right)
\leq \frac{1}{3}\,\mathcal{I}(3\rho).
\end{equation}
Now observe that by its definition given in  \eqref{definition}, the $Y$ is a~decreasing function on $\mathbb{R}_{+}$. Since the function $G$ defined in \eqref{G}  satisfies $G(r) \geq 1$ for any $r \geq 0$ (replace just $z$ by $i$ in the expression inside the supremum on the definition of $G(r)$), this yields  the right-hand of the required
estimate:
\begin{align}
\label{equa:upperBoundBooleanSign}
\rho \leq \int_{0}^{\rho} G(r)\:dr & = \mathcal{I}(\rho) \leq e^{\frac{2}{\sqrt{N}}}
\frac{1}{\sqrt{N}} Y\left(\frac{\alpha +1}{\sqrt{N}} \right).
\end{align}
To get the  lower bound for $\rho$, note that since $G$ is an increasing function  on [0, 1] (this can be deduced
immediately by the maximum modulus theorem) we have that
\begin{equation}
\label{equa:lowerBoundBooleanSign}
\frac{1}{3}\,\mathcal{I}(3\rho) = \frac{1}{3}\int_{0}^{3 \rho}{G(r) \: dr} \leq \rho\, G(3 \rho).
\end{equation}
We are going to bound now  $G(3\rho)$  by a constant independent of $N$ and $\alpha$. For that we make use
of Lemma \ref{Lemm:supremumTorusRewritten}, where an explicit expression for $G$ is presented.
Indeed, we are  going to distinguish two cases for  $r\in [0, 1]$ according to whether
$3 \rho \leq r_{\alpha}$ or $3 \rho \geq r_{\alpha}$. Assuming that $3 \rho \leq r_{\alpha}$, we apply
\eqref{equa:upperBoundBooleanSign}  to deduce that
\begin{align*}
G(3\rho) & = (1 + 3\rho)^{\alpha} \, (1 - 9 \rho^{2})^{\frac{N-\alpha-1}{2}} \leq (1 + 3 \rho)^{\alpha} \leq \exp{(3 \rho \alpha)}.
\end{align*}
The above formula for $Y$ shows that $Y(x) \leq 1/x$ for all $x>0$ and so it follows from \eqref{equa:upperBoundBooleanSign} that
\[
\rho \leq e^2 \frac{1}{\sqrt{N}}\,Y\left( \frac{\alpha +1}{\sqrt{N}} \right) \leq \frac{e^2}{\alpha +1}.
\]
This leads to $G(3\rho) \leq \exp(3e^2)$ in the case when $3\rho \leq r_{\alpha}$.

Consider now the case $3 \rho \geq r_{\alpha}$. Since $Y$ is a decreasing function on $[0,\infty)$, we conclude that
\begin{align*}
\rho & \leq \frac{e^2}{\sqrt{N}}\,Y\left( \frac{\alpha +1}{\sqrt{N}} \right) \leq
\frac{e^2}{\sqrt{N}}\,Y(0) =  \frac{C_0}{\sqrt{N}},
\end{align*}
where $C_0 = e^{2} \sqrt{\frac{\pi}{2}}$.

Combining the formula for $r_{\alpha}:= \frac{\alpha}{(N-1) + \sqrt{(N-1)^{2} - \alpha^{2}}}$ with
\eqref{equa:upperBoundBooleanSign} we get that
\[
\alpha \leq 2(N-1) r_{\alpha} \leq 6\rho \sqrt{N-1} \leq 6 C_{0}.
\]
Applying this last estimate and \eqref{equa:upperBoundBooleanSign}  we conclude that
\begin{align*}
G(3 \rho) & \leq \left(1 + \frac{\alpha}{N-1} \right)^{\frac{\alpha}{2}} (1 + 9\rho^{2})^{\frac{N-1}{2}}
\leq \exp{\left( \frac{\alpha^{2}}{2(N-1)} + 9 \rho^{2} \frac{N-1}{2} \right)}\\
& \leq \exp{\left( \frac{36 C_{0}^2 }{2} + \frac{9 C_{0}^{2}}{2} \right)} \leq \exp{(23 C_{0}^{2})}.
\end{align*}
Combining \eqref{equa:BestEstimationRadiusSignFunctions1}, \eqref{equa:lowerBoundBooleanSign} and \eqref{equa:upperBoundBooleanSign}
yields the desired estimations.
\end{proof}

Let us focus now on the proof of  Theorem \ref{Theo:BooleanRadiusSignFunction}. Using again the fact that $Y$ is a~decreasing
function with $Y(x) \leq 1/x$ on $\mathbb{R}_{+}$, it can be easily shown (see \cite{Mckay}) that
\begin{equation}\label{equa:EstimatingY}
\frac{x}{1 + x^{2}} \leq Y(x) \leq \frac{1}{x} \,\quad \mbox{for every $x>0$}.
\end{equation}
Note that these estimations are not accurate when $x$ goes to zero, as $Y(0)$ is a~well-defined positive real number.

\vspace{2mm}

\begin{proof}[Proof of Theorem \ref{Theo:BooleanRadiusSignFunction}]
Let $N \in \N$ and $0 \leq \alpha < N$. We start with the following observations: using the monotonicity of $Y$ we have
for $\alpha \leq \sqrt{N}$ the bounds
\begin{equation*}
%\label{equa:alphalower}
\frac{Y(2)}{\sqrt{N}} \leq \frac{1}{\sqrt{N}}Y\Big(\frac{\alpha + 1}{\sqrt{N}}\Big) \leq  \frac{Y(0)}{\sqrt{N}};
\end{equation*}
while for $\alpha \geq \sqrt{N}$ we can use the estimations \eqref{equa:EstimatingY} to deduce that
\begin{equation*}
%\label{equa:alphaupper}
\frac{1}{\sqrt{N} + \alpha + 1} \leq \frac{\alpha + 1}{N + (\alpha + 1)^{2}} \leq \frac{1}{\sqrt{N}}Y\Big(\frac{\alpha + 1}{\sqrt{N}}\Big)
\leq \frac{1}{\alpha+1} \leq \frac{2}{\alpha + \sqrt{N}}
\end{equation*}
We can summarize the above inequalities by saying that there exists an absolute constant $C_{0} > 0$ (independent of $\alpha$ and $N$)
which satisfies
\begin{equation}
\frac{C_{0}^{-1}}{\alpha + \sqrt{N}} \leq \frac{1}{\sqrt{N}}Y\Big(\frac{\alpha + 1}{\sqrt{N}}\Big) \leq  \frac{C_{0}}{\alpha + \sqrt{N}}.
\end{equation}
We  divide now the proof of the theorem into two cases.

\noindent
\textbf{Case 1:} Assume that $N-\alpha$ is an odd natural number. Then by \eqref{equa:EstimatingY} and  Proposition \ref{Theo:YEstimationRadiusSignFunctions} there is
 an absolute  constant $C_0>0$ such that
\begin{equation}
\label{equa:boundBooleanSignRestricted}
C_{0}^{-1} \frac{1}{\alpha + \sqrt{N}} \leq \rho(\psi_{\alpha, N}) \leq  C_{0} \, \frac{1}{\alpha + \sqrt{N}}.
\end{equation}

\noindent
\textbf{Case 2:} Without any restriction on $0\leq \alpha < N$, let $0 \leq n <N/2$ be an integer such that
$\alpha \in [N-2n-2, N-2n)$. Note that for each $x \in \{ \pm 1 \}^{N}$ the sum $x_{1} + \ldots + x_{N}$ is
equal to $N-2k$ for some integer $k$. Thus, if $\alpha' \in [N-2n-2, N-2n)$ then $\psi_{N, \alpha} = \psi_{N, \alpha'}$.
In particular this is true for $\alpha' = N-2n-1$, which satisfies that $N - \alpha'$ is an odd natural number.
We can apply then \eqref{equa:boundBooleanSignRestricted} to obtain that
\[
C_{0}^{-1} \, \frac{1}{\alpha' + \sqrt{N}} \leq \rho(\psi_{\alpha, N}) = \rho(\psi_{\alpha', N})
\leq  C_{0} \, \frac{1}{\alpha' + \sqrt{N}}.
\]
Finally, using that $|\alpha - \alpha'| \leq 1$ we conclude the result.
\end{proof}

The argument in Proposition \ref{Theo:BestEstimationRadiusSignFunctions} can be refined for the Majority function
$\Maj_{N}$ ($N$ odd) to give the precise estimation of its Boolean radius as $N$ goes to infinity.

\begin{Coro}\label{Coro:BooleanRadiusMaj}
The Boolean radius of the Majority function satisfies
\[
\rho\big( \Maj_{N} \big) = \frac{\gamma}{\sqrt{N}} (1 + o(1)),
\]
where $\gamma > 0$ is the only real number satisfying $\int_{0}^{\gamma}{e^{\frac{u^{2}}{2}} \: du} = \sqrt{\frac{\pi}{2}}$.
\end{Coro}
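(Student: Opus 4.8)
\medskip
\noindent\emph{Proof proposal.}
The plan is to specialise the exact computations behind Theorem~\ref{Theo:BestEstimationRadiusSignFunctions} to the case $\alpha=0$ (with $N$ odd), where the two-sided estimate \eqref{equa:BestEstimationRadiusSignFunctions1} degenerates into an identity. Write $\rho=\rho(\Maj_N)$, with $\Maj_N$ the threshold function at level $\alpha=0$. Since $N$ is odd, $x_1+\dots+x_N$ never vanishes and, by symmetry, $\widehat{\Maj_N}(\emptyset)=\mathbb{E}[\Maj_N]=0$; hence the defining equation of the Boolean radius reads $\sum_{n=1}^N\binom{N}{n}|\widehat{\Maj_N}([n])|\,\rho^{\,n}=1$. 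I then look at the polynomial $P(w):=\sum_{n=1}^N\binom{N-1}{n-1}\widehat{\Maj_N}([n])\,w^{n-1}$ appearing in that proof: by \eqref{equa:LemmFourierSpectrumSignFunctionsAux1} with $\alpha=0$ it equals $\binom{N-1}{(N-1)/2}2^{-(N-1)}(1-w^2)^{(N-1)/2}$, so $\widehat{\Maj_N}([n])=0$ for even $n$ and the surviving coefficients have alternating signs.

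The key point — and the only genuinely new ingredient — is that for this very special polynomial the Bohr-type inequality used in the proof of Theorem~\ref{Theo:BestEstimationRadiusSignFunctions} holds \emph{with equality for every radius}, so that the constant $R_N$ simply disappears. Indeed, evaluating $P$ at $w=ir$ turns every surviving monomial into a nonnegative multiple of $r^{\,n-1}$, whence
\[
\sum_{n=1}^N\binom{N-1}{n-1}|\widehat{\Maj_N}([n])|\,r^{\,n-1}=|P(ir)|=\sup_{z\in\mathbb{T}}|P(rz)|=\binom{N-1}{(N-1)/2}\,\frac{G(r)}{2^{N-1}},
\]
where $G(r)=(1+r^2)^{(N-1)/2}$ by Lemma~\ref{Lemm:supremumTorusRewritten} (here $r_0=0$). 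Using $\binom{N}{n}/N=\binom{N-1}{n-1}/n$ and integrating this identity over $r\in[0,\rho]$, then inserting $\sum_{n}\binom{N}{n}|\widehat{\Maj_N}([n])|\rho^{\,n}=1$, I obtain the exact relation
\[
\mathcal{I}(\rho)=\int_0^{\rho}(1+r^2)^{\frac{N-1}{2}}\,dr=\frac{2^{N-1}}{N\binom{N-1}{(N-1)/2}}=:M_N .
\]

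It then remains to invert this equation asymptotically. By Stirling, $\binom{N-1}{(N-1)/2}=2^{N-1}\big(\pi(N-1)/2\big)^{-1/2}(1+o(1))$, so $M_N=\sqrt{\pi/(2N)}\,(1+o(1))$ and $\sqrt{N}\,M_N\to\sqrt{\pi/2}$. Substituting $r=u/\sqrt{N}$ gives $\sqrt{N}\,\mathcal{I}(\rho)=\int_0^{\rho\sqrt{N}}(1+u^2/N)^{(N-1)/2}\,du$. The quantity $\rho\sqrt{N}$ stays bounded (for instance by the upper bound $\rho\le C/\sqrt{N}$ of Theorem~\ref{Theo:BooleanRadiusSignFunction}, or directly, since otherwise $\mathcal{I}(\rho)$ would be of strictly larger order than $M_N$), and on any fixed bounded interval $(1+u^2/N)^{(N-1)/2}\to e^{u^2/2}$ uniformly (with the clean domination $(1+u^2/N)^{(N-1)/2}\le e^{u^2/2}$); hence $\sqrt{N}\,\mathcal{I}(\rho)=\int_0^{\rho\sqrt{N}}e^{u^2/2}\,du+o(1)$. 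Comparing with $\sqrt{N}\,\mathcal{I}(\rho)=\sqrt{N}\,M_N\to\sqrt{\pi/2}$ forces $\int_0^{\rho\sqrt{N}}e^{u^2/2}\,du\to\sqrt{\pi/2}$, and since $v\mapsto\int_0^v e^{u^2/2}\,du$ is a strictly increasing bijection of $[0,\infty)$ this yields $\rho\sqrt{N}\to\gamma$, i.e. $\rho(\Maj_N)=\frac{\gamma}{\sqrt{N}}(1+o(1))$ with $\gamma$ the unique solution of $\int_0^\gamma e^{u^2/2}\,du=\sqrt{\pi/2}$.

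I expect the main obstacle to be precisely the observation in the second paragraph: recognising that the auxiliary polynomial attached to $\Maj_N$ attains the $\ell^1$-norm of its coefficients on every circle $|w|=r$ (because it has only even-degree terms with alternating-sign coefficients), so that the Bohr-radius detour through $R_N$ can be bypassed and \eqref{equa:BestEstimationRadiusSignFunctions1} sharpened to the exact equation $\mathcal{I}(\rho)=M_N$. Everything afterwards is a routine Stirling estimate together with the elementary limit $(1+u^2/N)^{(N-1)/2}\to e^{u^2/2}$.
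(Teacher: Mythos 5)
Your proposal is correct and follows essentially the same route as the paper's proof: both exploit the identity \eqref{equa:LemmFourierSpectrumSignFunctionsAux1} at $z=ir$ (where all surviving terms become nonnegative) to get the exact equation $\mathcal{I}(\rho_N)=\frac{2^{N-1}}{N\binom{N-1}{(N-1)/2}}$, then conclude via Stirling, the substitution $r=u/\sqrt{N}$, and the limit $(1+u^2/N)^{(N-1)/2}\to e^{u^2/2}$. The only (harmless) difference is cosmetic: you invert the limiting equation using monotonicity of $v\mapsto\int_0^v e^{u^2/2}\,du$, while the paper phrases the same step through the mean value theorem applied to $\int_\gamma^{C_N\gamma}e^{u^2/2}\,du=o(1)$.
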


\begin{proof}
In the case of the Majority function $\Maj_{N} = \psi_{N,0}$ where $N$ is odd, we have that replacing $z$ by $ri$ in \eqref{equa:LemmFourierSpectrumSignFunctionsAux1}
and comparing the coefficients of $r^{m}$ $(m \in \N)$ on both sides we get that
\[ \sum_{n=1}^{N}{ \binom{N-1}{n-1} |\widehat{\psi}_{N}([n])| r^{n-1} } = \binom{N-1}{\frac{N-1}{2}}
\frac{1}{2^{N-1}} (1 + r^{2})^{\frac{N-1}{2}}. \]
Denote by $\rho_{N}$ the Boolean radius of $\psi_{N}$. Then, integrating between $0$ and $\rho_{N}$ in the
previous expression and using that $\widehat{\psi}_{N}(\emptyset) = 0$ we deduce that
\[
\frac{1}{N}=\frac{1}{N} \sum_{n=1}^{N}{ \binom{N}{n} |\widehat{\psi}([n])| \rho_{N}^{n} }
= \binom{N-1}{\frac{N-1}{2}} \frac{1}{2^{N-1}} \int_{0}^{\rho_{N}}{(1 + r^{2})^{\frac{N-1}{2}} \: dr},
\]
where the first equality is just the definition of Boolean radius. Using Stirling's approximation formula and the change of variables $r=u/\sqrt{N}$, we obtain that if
$C_{N} := \rho_{N} \sqrt{N}/\gamma$ (where $\gamma$ is as above), then
\[
\int_{0}^{C_{N} \gamma}{\Big( 1 + \frac{u^{2}}{N} \Big)^{\frac{N-1}{2}} \: d u} =
\frac{1}{\sqrt{N}} \frac{2^{N-1}}{\binom{N-1}{\frac{N-1}{2}}} = \sqrt{\frac{\pi}{2}} \big( 1 + o(1)\big).
\]
Hence $C_{N}$ is bounded. Using the inequalities $(1 + \frac{1}{x})^{x} \leq e \leq (1 + \frac{1}{x})^{x+1}$
valid for $x>0$ we obtain that
\[
\int_{0}^{C_{N} \gamma}{e^{\frac{u^{2}}{2}} \: d u} = \sqrt{\frac{\pi}{2}} \big( 1 + o(1)\big).
\]
By the definition of $\gamma$ it follows that
\[
\int_{\gamma}^{C_{N} \gamma}{e^{\frac{u^{2}}{2}} \: du} = o(1).
\]
The mean value theorem yields that $C_{N} = 1 + o(1)$ as we wanted to prove.
\end{proof}

\subsection*{Upper bound}
Finally, we come back to the proof of  the upper estimate in Theorem  \ref{Theo:BooleanRadiusUnbiased}. We need  another  auxiliary
\begin{Lemm}\label{Lemm:EstimationSignAux} For every $0 \leq \alpha \leq N$ we have that
\begin{equation}\label{equa:EstimationSignAux}
\sigma(x_{1} + \ldots + x_{N} > \alpha) \geq  \exp{\left[-6 \Big(1 + \frac{\alpha}{\sqrt{N}}\Big)^2\right]}\,.
\end{equation}
\end{Lemm}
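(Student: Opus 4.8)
The plan is to write the probability as an upper tail of a symmetric binomial law and then feed it into McKay's asymptotic formula for partial sums of binomial coefficients, which is already at our disposal: it is precisely the estimate~\eqref{equa:MckayEstimation} used inside the proof of Lemma~\ref{Lemm:McKayEstimations}. (We assume $0\le\alpha<N$, the only range relevant for $\psi_{N,\alpha}$.) Letting $j$ be the number of coordinates of $x$ equal to $1$, one has $x_1+\dots+x_N=2j-N$, so $x_1+\dots+x_N>\alpha$ means $j\ge k$ with $k:=\lfloor\frac{N+\alpha}{2}\rfloor+1$, whence
\[
\sigma\big(x_1+\dots+x_N>\alpha\big)=\frac{1}{2^N}\sum_{j=k}^{N}\binom{N}{j}.
\]
For $0\le\alpha<N$ one checks immediately that $\frac N2\le k\le N$ and $\frac\alpha2<k-\frac N2\le\frac\alpha2+1$, so $0\le 2k-N\le\alpha+2$; applying~\eqref{equa:MckayEstimation} and dropping the factor $e^{E(k,N)/\sqrt N}\ge1$ gives
\[
\sigma\big(x_1+\dots+x_N>\alpha\big)\ \ge\ \frac{\sqrt N}{2^N}\,\binom{N-1}{N-k}\,Y\!\left(\frac{2k-N}{\sqrt N}\right).
\]

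The next step is to make this explicit. For the binomial coefficient I would use $\binom{N-1}{N-k}=\frac kN\binom Nk\ge\frac12\binom Nk$ (as $k\ge\frac N2$), the sub-Gaussian estimate $\binom Nk\ge\binom N{\lfloor N/2\rfloor}\exp(-(4\log 2)(k-\frac N2)^2/N)$ for $\frac N2\le k\le N$, and the central bound $\binom N{\lfloor N/2\rfloor}\ge 2^N/(2\sqrt N)$; combined with $(k-\frac N2)^2\le\frac14(\alpha+2)^2$ these give $\frac{\sqrt N}{2^N}\binom{N-1}{N-k}\ge\frac14\exp(-(\log 2)(\alpha+2)^2/N)$. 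For the remaining $Y$-factor I would not use $\frac{x}{1+x^2}\le Y(x)$ directly near $0$ (the paper warns just after~\eqref{equa:EstimatingY} that this is lossy there) but combine it with the monotonicity of $Y$ to get $Y(x)\ge\frac1{2\max(1,x)}$ for all $x\ge0$: for $x\ge1$, $\frac{x}{1+x^2}\ge\frac1{2x}$; for $x\le1$, $Y(x)\ge Y(1)\ge\frac12$, the last inequality being~\eqref{equa:EstimatingY} at $x=1$. Since $2k-N\le\alpha+2$ and $Y$ decreases, $Y(\frac{2k-N}{\sqrt N})\ge\frac{\sqrt N}{2(\sqrt N+\alpha+2)}$. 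With $s:=\frac{\alpha+2}{\sqrt N}$, so that $\sqrt N+\alpha+2=\sqrt N(1+s)$, the two bounds combine to
\[
\sigma\big(x_1+\dots+x_N>\alpha\big)\ \ge\ \frac{1}{8(1+s)}\,e^{-(\log 2)\,s^2}.
\]

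It remains to compare with $e^{-6(1+\alpha/\sqrt N)^2}$. For $N\le 3$ one argues trivially: $\sigma(x_1+\dots+x_N>\alpha)\ge\sigma(x_1=\dots=x_N=1)=2^{-N}\ge\frac18>e^{-6}$. For $N\ge 4$ one has $2\le\sqrt N$, hence $s\le u:=1+\frac{\alpha}{\sqrt N}$, so the last display is at least $\frac1{8(1+u)}e^{-(\log 2)u^2}\ge\frac1{16u}e^{-(\log 2)u^2}$, and the desired $\frac1{16u}e^{-(\log 2)u^2}\ge e^{-6u^2}$ reads $(6-\log 2)u^2\ge 4\log 2+\log u$, which holds for all $u\ge1$ (at $u=1$ the left side beats the right by more than $2$, and afterwards grows quadratically against a logarithm). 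This proves the lemma.

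The only genuinely delicate part is the constant bookkeeping in the middle paragraph, and two points are essential there: one must use a true sub-Gaussian bound for $\binom Nk/\binom N{\lfloor N/2\rfloor}$ together with the sharp ($1/\sqrt N$-strength) central estimate — the weaker entropy bound $\binom Nk\ge 2^{NH_2(k/N)}/(N+1)$ leaves a stray $1/(N+1)$ that a single $\sqrt N$ cannot cancel — and the lower bound for $Y$ must be routed through $\max(1,x)$, not through $\frac{x}{1+x^2}$, when the argument is small. Granting these, there is ample slack (a decay constant of up to roughly $12$ in the sub-Gaussian bound would still suffice), so the constant $6$ in the statement is comfortably met.
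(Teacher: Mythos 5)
Your route is essentially the paper's: write the probability as a binomial tail, feed it into McKay's formula \eqref{equa:MckayEstimation}, reduce to a near-central binomial coefficient times a value of $Y$, and finish with elementary bounds on $Y$. Your implementation is in places cleaner than the paper's (using \eqref{equa:MckayEstimation} directly with $k=\lfloor\frac{N+\alpha}{2}\rfloor+1$ avoids the parity discussion; the bound $Y(x)\geq \frac{1}{2\max(1,x)}$, obtained from \eqref{equa:EstimatingY} at $x\ge 1$ plus monotonicity, replaces the paper's integral estimate for $Y$), the identity $\binom{N-1}{N-k}=\frac{k}{N}\binom{N}{k}$ and the final numerical comparisons for $N\le 3$ and $N\ge 4$ all check out, and restricting to $\alpha<N$ is harmless (the paper's own proof needs it too).

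The one genuine weak point is the inequality
\begin{equation*}
\binom{N}{k}\;\geq\;\binom{N}{\lfloor N/2\rfloor}\,\exp\Big(-\tfrac{4\log 2}{N}\big(k-\tfrac N2\big)^{2}\Big),\qquad \tfrac N2\le k\le N,
\end{equation*}
which you assert as a standard sub-Gaussian bound but neither prove nor cite. It is \emph{not} the bound the paper imports from Lov\'asz, namely $\binom{2n}{n-m}\ge\binom{2n}{n}\exp\big(-m^{2}/(n-m+1)\big)$: for $k$ close to $N$ the denominator $n-m+1$ is $O(1)$ and that estimate is far too weak, and this is exactly the regime $\alpha\gtrsim N/2$ that your argument, unlike the paper's, does not treat separately. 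The asserted inequality is in fact true and can be proved in a few lines --- e.g.\ show that the increments $\log\frac{N-k}{k+1}+\frac{4\log2}{N}(2k+1-N)$ of $k\mapsto\log\binom{N}{k}+\frac{4\log2}{N}(k-\frac N2)^{2}$ are nonnegative at $k=\lfloor N/2\rfloor$ (for $N\ge 4$) and increase then decrease, so the minimum over $\lfloor N/2\rfloor\le k\le N$ is attained at an endpoint, where it follows from $\binom{N}{\lfloor N/2\rfloor}\le 2^{N}$ --- but this argument has to be supplied. Alternatively you can sidestep it by restoring the paper's first case: for $\alpha\ge N/2$ the trivial bound $\sigma(x_1+\dots+x_N>\alpha)\ge 2^{-N}\ge e^{-6(1+\alpha/\sqrt N)^{2}}$ settles the claim, and for $\alpha<N/2$ one has $N-k+1\ge N/4$, so the Lov\'asz bound already quoted in the paper yields your sub-Gaussian estimate with constant $4$ in place of $4\log 2$, which your final bookkeeping (as you note, anything up to roughly $12$ works) absorbs without change.
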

\begin{proof}
We are going to distinguish two cases. If $\alpha \geq N/2$ then we have the trivial bound
\[ \sigma(x_{1} + \ldots + x_{N} > \alpha) \geq 2^{-N} \geq \exp{(-N)} \geq  \exp{\left[-6 \Big(1 + \frac{\alpha}{\sqrt{N}}\Big)^2\right]}. \]
Assume that $\alpha < N/2$. By Lemma \ref{Lemm:McKayEstimations} we have that
\[\sigma(x_{1} + \ldots + x_{N} > \alpha) = \frac{1}{2^N} \sum_{n \geq \frac{N-\alpha - 1}{2}}{\binom{N}{n}} \geq \frac{\sqrt{N}}{2^N} \, \binom{N-1}{\frac{N-\alpha-1}{2}} \, Y \Big( \frac{\alpha +1}{\sqrt{N}} \Big).  \]
To bound the combinatorial number in the last expression let us recall some established bounds for binomial coefficients. From   \cite[p. 58]{Lovasz} we know that for each $0 \leq m \leq n$
\[
\binom{2n}{n-m} \geq \binom{2n}{n} \, \exp{\Big(\frac{-m^{2}}{n-m+1}\Big)} \geq \frac{2^{2n-1}}{\sqrt{n}}  \, \exp{\Big(\frac{-m^{2}}{n-m+1}\Big)}\,,
\]
where the last inequality is consequence of classical Stirling estimations. This yields that if $N$ is an odd number and $\alpha$ is as above, then
\[ \binom{N-1}{\frac{N- \alpha -1}{2}} \geq \frac{\sqrt{2}}{4} \, \frac{2^{N}}{\sqrt{N}} \, \exp{\Big(\frac{-\alpha^2}{2(N - \alpha + 1)}\Big)} \geq \frac{\sqrt{2}}{4} \, \frac{2^{N}}{\sqrt{N}} \exp{\Big( -\frac{\alpha^2}{N}\,, \Big)}  \]
where the last inequality follows from the hypothesis $\alpha \leq N/2$. If $N$ is even, then we can use the standard relations between binomial coefficients and the previous bound to get
\[ \binom{N-1}{\frac{N- \alpha -1}{2}} \geq \binom{N-2}{\frac{N- \alpha -1}{2}} \geq \frac{\sqrt{2}}{4} \, \frac{2^{N-1}}{\sqrt{N-1}} \exp{\Big( -\frac{(\alpha-1)^2}{N-1} \Big)} \,.\]
To avoid this distinction between $N$ odd or even, we can write the common lower bound
\[ \binom{N-1}{\frac{N- \alpha -1}{2}} \geq \frac{\sqrt{2}}{8} \, \frac{2^{N}}{\sqrt{N}} \, \exp{\Big( - \frac{2 \alpha^{2}}{N} \Big)}. \]
Thus, we have that
\begin{equation}\label{equa:estimationSignAux2}
\sigma(x_{1} + \ldots + x_{n} > \alpha)
\geq \frac{\sqrt{2}}{8} \, \exp{\Big( -\frac{2 \alpha^2}{N} \Big)}  \, Y\Big( \frac{\alpha +1}{\sqrt{N}} \Big)\,.
\end{equation}
To bound the last factor we simply turn to the original definition \eqref{definition} of the function $Y$ from which we easily obtain that
\begin{align*}
Y\Big( \frac{\alpha +1}{\sqrt{N}} \Big) & = \exp{\Big( \frac{(\alpha+1)^2}{2 N} \Big)} \int_{\frac{\alpha + 1}{\sqrt{N}}}^{  \infty}{e^{-\frac{t^2}{2}} \: dt}\\
& \geq \exp{\Big( \frac{\alpha^2}{2N} \Big)} \cdot \int_{\frac{\alpha + 1}{\sqrt{N}}}^{\frac{\alpha + 1}{\sqrt{N}} + 1}{e^{-\frac{t^2}{2}} \: dt} \\
& \geq \exp{\Big( \frac{\alpha^2}{2N} \Big)}  \exp{\Big[-\frac{1}{2}\Big(1+ \frac{\alpha+1}{\sqrt{N}} \Big)^2\Big]}\\
& \geq  \exp{\Big( \frac{\alpha^2}{2N} \Big)}  \exp{\Big[-2\Big(1+ \frac{\alpha}{\sqrt{N}} \Big)^2\Big]}\,.
\end{align*}
Applying this last estimation to \eqref{equa:estimationSignAux2} we conclude that
\begin{align*}
\sigma(x_{1} + \ldots + x_{N} > \alpha) & \geq \frac{\sqrt{2}}{8} \exp{\Big( - \frac{3\alpha^{2}}{2N} \Big)} \, \exp{\Big[-2\Big(1+ \frac{\alpha}{\sqrt{N}} \Big)^2\Big]}\\
& \geq  \exp{\Big[-6\Big(1+ \frac{\alpha}{\sqrt{N}} \Big)^2\Big]} \,.
\end{align*}
This completes the proof.
\end{proof}

\begin{proof}[Proof of Theorem \ref{Theo:BooleanRadiusUnbiased}]
Let $N \in \N$ and $\frac{1}{2^{N}} \leq \delta \leq 1$. It has already been proved in Theorem \ref{Theo:biasedBooleanRadius} that
\[
\rho(\mathcal{B}^{N}_{\delta}) \geq \frac{1}{5 \sqrt{N} \sqrt{\log{(2/\delta)}}}.
\]
To prove the upper bound, we will make use of the threshold functions $\psi_{\alpha, N},\, 0 \leq \alpha < N$ which were studied in the previous section.  Recall that
\[ \big| \mathbb{E}[\psi_{\alpha, N}] \big| = 1 - 2 \sigma(x_{1} + \ldots + x_{N} > \alpha) = \big[ 1 - 2 \sigma(x_{1} + \ldots + x_{N} > \alpha) \big] \, \| \psi_{\alpha, N}\|_{\infty}. \]
Then, by Lemma \ref{Lemm:EstimationSignAux} we deduce that $\psi_{\alpha, N} \in \mathcal{B}_{\delta}^{N}$ whenever
\begin{equation}\label{equa:BooleanRadiusUnbiasedAux1}
\left(1 + \frac{\alpha}{\sqrt{N}} \right)^{2} = \frac{\log{(2/\delta)} }{6}.
\end{equation}
If $\delta < 2/e^{6}$, then we can find $0 \leq \alpha < N$ satisfying  \eqref{equa:BooleanRadiusUnbiasedAux1}, so that
\[ \rho(\mathcal{B}^{N}_{\delta}) \leq \rho(\psi_{N, \alpha}) \leq \frac{C}{\alpha + \sqrt{N}} = \frac{C}{\sqrt{N} \Big(1 + \frac{\alpha}{\sqrt{N}}\Big)} = \frac{C \sqrt{6}}{\sqrt{N} \sqrt{\log{(2/\delta)}}}. \]
If $\delta \geq 2/e^{6}$, then, since $\psi_{N,0} \in \mathcal{B}^{N}_{\delta}$ holds trivially, we immediately have that
\[ \rho(\mathcal{B}_{\delta}^{N}) \leq \rho(\psi_{N,0}) \leq O\Big( \frac{1}{\sqrt{N}} \Big) \,. \qedhere\]
\end{proof}

%%%%%%%%%%%%%%%%%%%%%%%%%%%%%%%%%%%%%%%%%%%%%%%%%%%%%%%%%%%%%%%%%%%%%%%%%%%%%%%%%%%%%%%%%%%%%%%%%%%%%%%%%%%%%%

\end{document}